\newtheorem{conjecture}{Conjecture}
\newtheorem{theorem}{Theorem}
\newtheorem{corollary}{Corollary}
\newtheorem{lemma}{Lemma}
\newtheorem{proposition}{Proposition}
\newtheorem{definition}{Definition}
\newtheorem{remark}{Remark}
\newtheorem{examples}{Examples}
\newtheorem{example}{Example}
\newtheorem{notation}{Notation}
\def\B{\mathsf{B}}
\def\A{\mathsf{A}}
\def\M{\mathsf{M}}
\def\T{\mathsf{T}}
\def\E{\mathcal{E}}
\def\Supp{\rm{Supp}}
\begin{document}

\title{Markov trace on the algebra of braids and ties}

\author{Francesca Aicardi}
 \address{The Abdus Salam International Centre for Theoretical Physics (ICTP),  Strada  Costiera, 11,   34151  Trieste, Italy.}
 \email{faicardi@ictp.it}
 \author{Jes\'us Juyumaya}
 \address{Instituto de Matem\'{a}ticas, Universidad de Valpara\'{i}so,
 Gran Breta\~{n}a 1111, Valpara\'{i}so, Chile.}
 \email{juyumaya@gmail.com}

\keywords{ Markov trace, knots invariants, algebra of diagrams}
\thanks{
This work was written   during a Visiting Fellowship  of the second author at the ICTP, Trieste (Italy).
The second author has been supported partially by Fondecyt 1141254,   DIUV  No 1--2011 and by
the European Union (European Social Fund  ESF) and Greek national funds through the Operational Program
  \lq Education and Lifelong Learning\rq\ of the National Strategic Reference Framework (NSRF)  Research Funding
Program: THALES: Reinforcement of the interdisciplinary and/or interinstitutional research and innovation.
}

\subjclass{57M25, 20C08, 20F36}

\date{}

\begin{abstract}
We prove  that the so--called {\it algebra of braids and ties}
supports a Markov trace. Further, by using this trace in the  {\it Jones' recipe},
we define    invariant polynomials  for classical knots and singular knots. Our invariants      have three parameters. The invariant
of classical knots is an extension of the   Homflypt polynomial
and the invariant of singular knots is an extension   of  an
invariant of singular knots previously  defined by  S. Lambropoulou and  the second author.
\end{abstract}

\maketitle

\section{Introduction}

The algebra of braids and ties (defined by generators and relations) firstly appeared in \cite{juICTP},
having the purpose  of constructing  new representations of  the braid group.
Later, the  first  author observed   that  the definition had  a redundant relation and  provided a graphical interpretation of the generators and relations in terms of  braids and ties. In \cite{aijuICTP1} we have
investigated this algebra, proving in particular  that  it is finite--dimensional
and discussing the representation theory in low dimension.

\smallbreak

Let $n$ be a positive integer. The   algebra of braids   and ties with parameter $u$ is
denoted $\mathcal{E}_n(u)$.  Its generators
can be regarded as elements of the Yokonuma--Hecke algebra ${\rm Y}_{d,n}(u)$\cite{juJKTR}. In fact, the    defining relations of
$\mathcal{E}_n(u)$
 come  out by imposing the commutation relations of the braid generators  of  ${\rm Y}_{d,n}(u)$  with certain idempotents in
 ${\rm Y}_{d,n}(u)$ appearing  in the square of the braid generators,   for details see Subsection 3.2.

\smallbreak

The algebra $\mathcal{E}_n(u)$ was studied  by S. Ryom--Hansen in \cite{rhJAC}.
He constructed a faithful  tensorial representation (Jimbo--type) of this algebra
which is used to classify the irreducible representations
 of $\mathcal{E}_n(u)$. Notably, he constructed a basis, showing that the dimension of the algebra is $b_nn!$, where
 $b_n$ denotes the $n$--th Bell number. This basis plays a crucial role here to prove that $\mathcal{E}_n(u)$
 supports a Markov trace. Likewise, the algebra was considered by E. Banjo in  her Ph. D. thesis, see
\cite{baART}. She has related $\mathcal{E}_n(u)$ to the ramified partition algebra \cite{maelMZ}. More precisely,
E. Banjo  has shown an explicit isomorphism between  the  specialized  algebra  $\mathcal{E}_n(1)$ and a small ramified
partition algebra;  by using this isomorphism she determined the complex generic representation of $\mathcal{E}_n(u)$.

\smallbreak

Looking  at   the graphical interpretation of the generators of $\mathcal{E}_n(u)$    given in \cite{aijuICTP1}, it is natural to try
to define an invariant of knots through the same mechanism (Jones' recipe) that  defines the famous Homflypt
polynomial \cite{joAM}. To do that it is essential to have a Markov trace on  $\mathcal{E}_n(u)$.
 Since the  algebra  $\mathcal{E}_n(u)$   was provided  with  a  basis  by Ryom--Hansen,   a first
 attempt was to define a trace by  the  same inductive method used  to define the Ocneanu trace on the
 Hecke algebras, i.e., by constructing an  isomorphism between    the  algebra at level $n$  and
 a direct sum  of algebras   at lower levels, for details see the proof  \cite[Theorem 5.1]{joAM}.
 Unfortunately, we could not  reproduce  this method in our situation because  the  Ryom--Hansen basis
 cannot be  defined -- at  least simply -- in an inductive manner. We have then adopted  successfully
 the method of {\it relative traces}  \cite{chpoArxiv, ogpo}, using  as main reference
 the work of  M. Chlouveraki and L. Poulain d'Andecy  \cite[Section 5]{chpoArxiv},
 where it is proved that certain affine and cyclotomic Yokonuma--Hecke algebras support a Markov trace.
 Other  works where  the method of relative traces appears are \cite{ogpo, isog1, isog2, iski}, but  we don't know whom to  credit   for this method.

\smallbreak

  In this paper we prove that  $\mathcal{E}_n(u)$ supports a Markov trace $\rho$, that  depends on two parameters
 $\mathsf{A}$ and $\mathsf{B}$. Then, by using as ingredient
$\rho$ in the  Jones' recipe \cite{joAM} and a representation of the braid group (respectively, of the braid monoid)
in $\mathcal{E}_n(u)$,   we define an invariant, $\bar{\Delta}(u,\A,\B)$,  for classical knots
(respectively, $\bar{\Gamma}(u,\A,\B)$, for singular knots),
 with parameters $u$, $\mathsf{A}$ and $\mathsf{B}$. Since the definitions of these invariants essentially  uses   the same formula
 given by Jones to define the Homflypt polynomial, we can see that the specialization $\bar{\Delta}(u, \mathsf{A}, 1)$
 is in fact the Homflypt polynomial. Also, for the same reason it is clear that $\bar{\Delta}(u, \mathsf{A}, 1/m )$ (respectively
 $\bar{\Gamma}(u, \mathsf{A}, 1/m )$), where $m$ is a positive integer, coincides with the invariant of classical knots  (respectively singular knots),  defined by
  S. Lambropoulou and  the second author  in \cite{julaMKTA} (respectively \cite{julaJKTR}).   For more information   on how strong is the  invariant $\bar{\Delta}$ ,
see the Addendum at the end of the paper.

 \smallbreak

 Finally, we  note that the invariants defined here
can be recovered   from an  invariant  for tied  knots, see \cite{AJlinks}. The tied knots constitute  in fact  a  new   class of knot--like objects
  in the Euclidean space whose definition  is motivated   by the graphical interpretation   of the defining generators
of $\mathcal{E}_n(u)$   in terms of    braid and ties, given in Section \ref{sectiondiagram}.

\smallbreak

 The structure of the paper is as follows. In Section \ref{sectionnotations}, we give the
necessary   background  and  the  notation used in the paper. In Section \ref{sectionthealgebraof},  we recall
the definition and the origin of the algebra of braids and ties (Subsections 3.1 and 3.2 respectively). In Subsections 3.3 and 3.4,
 we collect some  further relations  coming from  the defining relations of    $\mathcal{E}_n(u)$ and  we recall the definition  of  a  basis, found by S. Ryom--Hansen, for this algebra (\cite{rhJAC}). In addition, we show some relations among  the elements of  Ryom--Hansen basis which will be used later.
  Section \ref{sectionmarkovtrace} contains the main result of this paper, where  we prove     that  $\mathcal{E}_n(u)$ supports a Markov trace. This Section is divided in  two subsections. The  first  one  is devoted to the construction of a family of relative traces
(Theorem \ref{relativet})
which are used in the  second  Subsection for the definition  of the Markov trace on    $\mathcal{E}_n(u)$, see Theorem \ref{trace}. In Section \ref{sectionapplications},
 we construct an  invariant of classical links  (Theorem \ref{Delta})  and an invariant of singular knots  (Theorem \ref{Gamma}). These  invariants  can be interpreted, respectively, as a
 generalization of the Homflypt polynomial and   as a generalization of the invariant defined by
 S. Lambropoulou and the second author;   in Subsection 5.3, we  explain     how these invariants are related.
   In Section \ref{sectiondiagram}, we recall the diagrammatic interpretation of the defining generators of    $\mathcal{E}_n(u)$ given in \cite{aijuICTP1}   according to  which  one part of the generators is represented as usual braid and the other part as ties. Moreover,  we generalize  the  diagrammatic interpretation  of  the  ties  generators,  so  that  the elements of  the basis by  Ryom--Hansen result in  a   quite  simple     form  and some defining  relations  of  the  algebra  become then evident.  The paper ends with Section 7, dealing with an interesting question posed by the referee.  Also,  in this   section we introduce   the idea of
 \lq deframization\rq\ of a framized algebra,    and  we propose  a natural
deframization of  certain framized algebras defined by M. Chlouveraki and L.
Poulain d'Andecy in \cite{chpoArxiv}.

{\bf Acknowledgements.} The  authors  are  deeply  grateful  to  an  anonymous  referee,  who  pointed  out  different  gaps  and  inaccuracies  in  the original  manuscript, making useful  remarks  and  posing  interesting  questions. A  particular  thank also  to  Marcelo Flores, for having noticed  the  redundancy  of  a lemma  in  a previous  version.

\section{Notations and background}\label{sectionnotations}

\subsection{}
Let $u$ be an indeterminate. We denote by $K$  the field of the rational functions $\mathbb{C}(u)$.

The term algebra here indicates an associative unital (with unity $1$)  algebra over  $K$. Thus,   $K$  can  be  viewed as a subalgebra of the center of the algebra.

As usual we denote by $B_n$ the braid group on $n$ strands. Thus, $B_{n}$ has the Artin presentation by
generators $\sigma_1, \ldots ,\sigma_{n-1}$  and the {\it braid relations}:
$\sigma_{i}\sigma_{j} = \sigma_{j}\sigma_{i}$, for $\vert i-j \vert >1$ and
$\sigma_{i}\sigma_{i+1}\sigma_{i}=\sigma_{i+1}\sigma_{i}\sigma_{i+1}$, for $i\in\{1, \ldots , n-2\}$. We
assume   that  the braid generators $\sigma_{i}$ have positive crossing, represented by the following diagram:

  \begin{figure}[h]
  \centering
  \includegraphics* {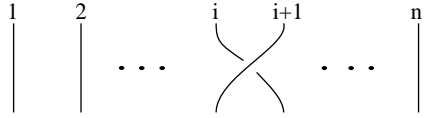}
 \caption{ The braid generator $\sigma_i$}\label{Fig1}
  \end{figure}

Let $S_n$ be the symmetric group on   the  alphabet {\bf n}$=\{1,\ldots,n\}$, and $s_i$ the transposition
$(i,\, i+1)$. Recall that every element $w\in S_n$ can be written (uniquely) in the   following  form
\begin{equation}\label{wcanonico}
w= w_1w_2\cdots w_{n-1}
\end{equation}
where $w_i\in \{1, s_i, s_is_{i-1},\ldots , s_is_{i-1}\cdots s_1 \}$.

\subsection{}
We denote by    $\mathsf{P}({\bf n})$ the  set formed by the set--partitions of $\bf n$.
The cardinality of $\mathsf{P}({\bf n})$ is called the $n$--th Bell number.

The pair  $(\mathsf{P}({\bf n}), \preceq)$ is  a  poset.  More precisely, setting  $I:=(I_1, \ldots , I_r)$, $J := (J_1, \ldots , J_s)
 \in \mathsf{P}({\bf n})$,   the partial order $\preceq$ is defined by
 $$
 I\preceq J \quad \text{if and only if each  $J_k$  is a union of some $I_m$'s}.
 $$
If $I\preceq J$ we  shall  say  that  $I$    {\sl refines}  $J$.

 The  subsets of  $\bf{n}$ entering a  partition   are  called blocks.
  For  short   we shall omit the subsets  of cardinality $1$ (single  blocks) in the partition. For example, the  partition $I=(\{1,3\}, \{2\}, \{4\}, \{5\}, \{6\} )$ in $\mathsf{P}({\bf 6})$, will be simply written as  $I=(\{1,3\})$.   Moreover,   Supp$(I)$  will  denote  the  union  of  non-single  blocks of $I$.

 \smallbreak

The symmetric group $S_n$ acts naturally on $\mathsf{P}({\bf n})$. More precisely, set $I=(I_1, \ldots , I_m)\in \mathsf{P}({\bf n})$.
The action $w(I)$  of $w \in S_n$ on $I$  is given by
\begin{equation}\label{wsym}
w(I) := ( w(I_1), \ldots , w (I_m))
\end{equation}
where $w(I_k)$ is the subset of $\bf n$ obtained  by applying  $w$ to  the set $I_k$.

If $I$  and  $J$  are two  set--partitions in $\mathsf{P}({\bf n})$,  we  denote  $  I \ast J $  the  minimal  set--partition   refined by   $I$ and  $J$. Let $L$ be a  subset of  {\bf{n}}. During  the  work we  will use for  short  $I \ast L$ to  indicate  $I \ast (L)$.  So,    $  I \ast \{j,m\} $  coincides  with  $I$  if  $j$  and  $m$  already belong  to  the  same   block in  $I$, otherwise, $ I \ast \{j,m\} $   coincides  with  $I$  except for   the two  blocks  containing  $j$  and  $m$,  that  merge  in  a  sole  block.  For short,  we shall denote  by  $I \ast j $ the  set--partition $ I\ast \{j,j+1\} $. For instance, for the set--partition $I= (\{1,2, 4\}, \{3,5,6\})$ :
 $$
 I\ast \{1,4\} =I \quad\text{and}\quad
 I \ast 2 =(\{1,2,3,4,5,6\}).
 $$

Finally, for $I\in \mathsf{P}({\bf n})$, we denote   $I\backslash n$  the element in $\mathsf{P}({\bf n-1})$ that is obtained
 by removing $n$ from $I$. For example, for the set--partition $I$ of  the  example  above, $I\backslash 6 =  (\{1,2, 4\}, \{3,5\})$.


\section{The algebra of braids and ties}\label{sectionthealgebraof}

\subsection{}    We recall here the definition of the algebra of braids and ties ${\mathcal E}_n(u)$.   From now on, for brevity, we call this algebra simply the bt--algebra. Further,  we shall omit $u$ in  ${\mathcal E}_n(u)$.

\begin{definition}\rm
  We set ${\mathcal E}_1 = K$ and  for  every   integer   $n >1$  we  define ${\mathcal E}_n$ as  the algebra generated by $ T_1, \ldots, T_{n-1}, E_1, \ldots , E_{n-1}$
 satisfying the following relations:
\begin{eqnarray}
\label{E1}
T_iT_j& = & T_jT_i  \qquad \text{ for all $i,j$ such that   $\vert i-j\vert >1$} \\
\label{E2}
T_iT_jT_i& = & T_jT_i T_j \qquad \text{ for all $i,j$ such that  $\vert i-j\vert = 1$} \\
\label{E3}
T_i^2 & = & 1 + (u-1) E_{i}  \left( 1+  T_i\right)\qquad \text{ for all $i$} \\
\label{E4}
E_iE_j & =  &  E_j E_i \qquad \text{ for all $i,j$}\\
\label{E5}
E_i^2 & = &  E_i \qquad \text{ for all $i$}  \\
\label{E6}
E_iT_i  & = &   T_i E_i \qquad  \text{ for all  $i$ } \\
\label{E7}
E_iT_j  & = &   T_j E_i \qquad  \text{ for all  $i,j$  such that  $\vert i - j\vert > 1$}\\
\label{E8}
E_iE_jT_i & = &  T_iE_iE_j \quad = \quad E_jT_iE_j\qquad \text{ for  all  $i,j$ such that   $\vert i  -  j\vert =1$}\\
\label{E9}
E_iT_jT_i & = &   T_jT_i E_j \qquad \text{ for  all  $i,j$ such that   $\vert i - j\vert =1$}.
\end{eqnarray}
\end{definition}
\begin{remark}\rm
 The  above  definition  coincides  with  the  original definition of  ${\mathcal E}_n$  under  the  substitution of  $u$  with  $1/u$  and of   $T_i$  with $-T_i$, see \cite{juICTP}.
\end{remark}

\subsection{}\label{explanE}

Behind  the definition of the  algebra of braids and ties ${\mathcal E}_n$ there is the Yokonuma--Hecke algebra ${\rm Y}_{d,n}= {\rm Y}_{d,n}(u) $, where
$d$ denotes a positive integer. We refer to \cite{julaWS2} for the  role  of this algebra in knot theory and to \cite{chpoAM}
for its  combinatorial representation theory. The algebra ${\rm Y}_{d,n}$ can be regarded as a  $u$--deformation of the
wreath product of
 the symmetric group $S_n$ by the cyclic group   $C_d$ of order $d$,  in  an  analogous way   as the Hecke algebra
is a deformation of $S_n$.
More precisely, the algebra ${\rm Y}_{d,n}$ is the algebra generated  by  the braid generators $g_1, \ldots , g_{n-1}$ together with the framing generators $t_1, \ldots , t_{n}$ which   satisfy the following   relations:
the braids relations (said of  type $A$) among the $g_i$'s,  $t_it_j=t_jt_i$,
$g_it_j = t_{s_i(j)}g_i$, $t_i^d=1$  and
\begin{equation}\label{cuadratica}
g_i^2 = 1 + (u-1) e_i (1  + g_i)
\end{equation}
where $e_i$ is defined  by
\begin{equation}\label{eisumti}
e_i :=\frac{1}{d}\sum_{s=1}^{d}t_i^st_{i+1}^{-s}.
\end{equation}
\begin{remark}\rm \label{relationHecke}
Denote by ${\rm H}_n$ the Hecke algebra of parameter $u$, that is, the associative $K$--algebra defined by generators $h_1, \ldots , h_{n-1}$ subject to the braid relations (of type $A$) among the $h_i$'s and the Hecke quadratic relations $h_i^2 = u + (u-1)h_i$, for all $i$. We note that for  $d=1$, the algebra ${\rm Y}_{d,n}$ is the Hecke algebra, since the elements $t_i$  are trivial, so  $e_i=1$  for all $i$, and thus (\ref{cuadratica})  becomes the  quadratic Hecke relation.
 It is now clear that the  mappings  $g_i\mapsto h_i$ and $t_i\mapsto 1$  define an epimorphism from ${\rm Y}_{d,n}$ to ${\rm H}_n$.  We  denote this  epimorphism  by $\phi_n$.

\end{remark}

 The definition of the bt--algebra is obtained by considering abstractly the $K$-- algebra generated by the $g_i$'s and the $e_i$'s. Then   $g_i$ becomes $T_i$, $e_i$ becomes $E_i$ and the set of the  defining relations of the bt--algebra corresponds to  the  complete  set of  relations derived from the commuting relation among the $g_i$'s and the $e_i$'s.   Thus, in particular, we have the following proposition.

\begin{proposition}  \cite[Lemma 2.1]{julaJKTR} \label{EtoYH}
There is a natural   homomorphism  $\psi_n :
{\mathcal E}_n\rightarrow{\rm Y}_{d,n}$ defined through the mappings $T_i\mapsto g_i$ and
$E_i  \mapsto e_i$.
\end{proposition}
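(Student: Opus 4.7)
The plan is to exploit the universal property of the presentation defining $\mathcal{E}_n$: since $\mathcal{E}_n$ is the algebra freely generated by symbols $T_1,\ldots,T_{n-1},E_1,\ldots,E_{n-1}$ modulo the relations (\ref{E1})--(\ref{E9}), it suffices to verify that the elements $g_i,e_i\in{\rm Y}_{d,n}$ satisfy the images of these nine relations. Once this is done, the assignment $T_i\mapsto g_i$, $E_i\mapsto e_i$ extends uniquely to a $K$-algebra homomorphism $\psi_n$.

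I would divide the relations into three batches according to the difficulty of the verification. The easy ones are (\ref{E1}), (\ref{E2}) and (\ref{E3}): the first two are just the type-$A$ braid relations for the $g_i$'s, which hold by definition in ${\rm Y}_{d,n}$, while (\ref{E3}) is literally the quadratic relation (\ref{cuadratica}) of the Yokonuma--Hecke algebra. The second batch consists of the purely ``framing'' relations (\ref{E4}) and (\ref{E5}): commutation $e_ie_j=e_je_i$ follows immediately from the fact that all $t_k$'s commute, and idempotency $e_i^2=e_i$ is a direct calculation from the defining formula (\ref{eisumti}), expanding the double sum and using $t_k^d=1$ together with the change of summation index $s+s'\mapsto s'$.

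The third batch contains the mixed relations (\ref{E6})--(\ref{E9}), which are where the actual work lies. Relation (\ref{E7}) is immediate because $g_j$ only affects the framings with indices $j,j+1$, which are disjoint from those appearing in $e_i$ when $|i-j|>1$. For (\ref{E6}), one uses the braid--framing rule $g_it_k=t_{s_i(k)}g_i$: conjugation by $g_i$ exchanges $t_i$ and $t_{i+1}$, and since $e_i$ is symmetric in $t_i,t_{i+1}$ (after the reindexing $s\mapsto -s$ in (\ref{eisumti})), it commutes with $g_i$. Relations (\ref{E8}) and (\ref{E9}) are the genuinely delicate ones; I would verify them by writing $e_i,e_{i\pm1}$ explicitly as $\frac{1}{d^2}\sum_{s,s'}t_i^st_{i+1}^{s'-s}t_{i+2}^{-s'}$ and similar, then pushing each $g_i$ past the framings using $g_it_k=t_{s_i(k)}g_i$, and finally relabeling the summation indices to recognize the right-hand side. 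Alternatively, both identities are already recorded in the literature on ${\rm Y}_{d,n}$ (and are used implicitly in \cite{julaJKTR,julaWS2}), so one may simply cite them.

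The main obstacle is the bookkeeping in (\ref{E8}) and (\ref{E9}): one has to move $g_i$ through products of two $e$-elements whose framings share indices, and make the resulting triple sums collapse to the claimed expressions. Since all remaining relations are either trivial or routine, once these two identities are established the proposition follows.
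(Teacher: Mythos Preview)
Your proposal is correct and is exactly the approach implicit in the paper: the paper does not spell out a proof but simply observes (in the paragraph preceding the proposition) that the defining relations of $\mathcal{E}_n$ are, by construction, precisely the relations satisfied by the $g_i$'s and $e_i$'s in ${\rm Y}_{d,n}$, and cites \cite[Lemma 2.1]{julaJKTR} for the verification. Your outline of checking (\ref{E1})--(\ref{E9}) for the images is the standard and only way to establish this, and the detailed verification you sketch for (\ref{E8}) and (\ref{E9}) is what is carried out in the cited reference.
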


\begin{remark}\rm\label{EontoH}

\begin{enumerate}
\item Observe that the composition $\varphi_n:=\phi_n\circ\psi_n$,   sending $T_i\mapsto h_i$ and $E_i\mapsto 1$,  is an epimorphism from ${\mathcal E}_n$   to  ${\rm H}_n$.

\item Assuming as  ground field ${\Bbb C}(q)$, where $q^2= u$, we can deduce from  a theorem of J. Espinoza and S. Ryom--Hansen that   $\psi_n$ is injective  for $d\geq n$, see \cite{esry}. Indeed,   first we note that  in   the   presentations for  ${\rm Y}_{d,n}$ and $\mathcal{E}_n$ used in  \cite{esry}   the   respective braid generators satisfy    quadratic relations modified  with  respect  to the  original in \cite{juJKTR} and \cite{aijuICTP1}.
More precisely,  denoting  by $\tilde{g}_i$ (respectively $\tilde{T}_i) $
the braid generators of the Yokonuma--Hecke algebra (respectively, of the  bt--algebra)  used in \cite{esry},   the quadratic relations  are $\tilde{g}_i^2 = 1 + (q-q^{-1})e_i\tilde{g}_i$
  (respectively
$\tilde{T}_i^2 = 1+(q-q^{-1})E_i\tilde{T}_i$).
Now,  Theorem 8  in \cite{esry} states that  for $d\ge n$  the homomorphism $\psi^{\prime}_n: {\mathcal E}_n\rightarrow{\rm Y}_{d,n}$,   mapping $\tilde{T}_i \mapsto \tilde{g}_i$ and $E_i \mapsto e_i$ is injective.
Then the injectivity of $\psi_n$ comes  from the fact that $\psi_n = I^{-1}\circ\psi_n^{\prime}\circ J$, where $I$ and $J$  are the automorphisms defined as the identity on the non--braid generators and   on the braid generators   by:
$$
I(g_i)= \tilde{g}_i + (q-1)e_i\tilde{g}_i \quad \text{and} \qquad J(T_i) = \tilde{T}_i+ (q-1)E_i\tilde{T}_i
$$
 Observe that
$
I^{-1} (\tilde{g}_i) = g_i + (q^{-1}-1)e_ig_i.
$
 \end{enumerate}
\end{remark}

\subsection{}\label{basisRH}
In the present subsection we   outline   some  useful relations among the defining generators and some algebraic properties of the bt--algebra that  we  will  use in the sequel.

First of all, we  recall that the  fact  that the   $T_i$'s  satisfy  the same  braiding relations as the generators  $s_i$'s of $S_n$ implies, in  virtue of a well--known  result of  Matsumoto, that the following elements $T_w$ are well defined
$$
T_w := T_{i_1}\cdots T_{i_k}
$$
where $w = s_{i_1}\cdots s_{i_k}$ is a reduced expression for $w\in S_n$.

 In  the  following proposition we  list some relations  arising directly from the defining relations of $\mathcal{E}_n$.
We shall use these relations along the paper mentioning  only this proposition.
\begin{proposition}\label{relations}
For all $i,j$, we have:
 \begin{enumerate}
\item[(i)]The elements $T_i$'s are invertible. Moreover,
 \begin{equation}\label{inverse}
T_i^{-1} = T_i + (u^{-1}-1)E_i + (u^{-1}-1)E_i T_i
\end{equation}
\item [(ii)] $T_iT_jT_i^{-1} = T_j^{-1}T_iT_j$, for $\vert i-j\vert= 1$
\item [(iii)]  $ T_i^3  - uT_i^2 - T_i + u = 0$.
\end{enumerate}

\end{proposition}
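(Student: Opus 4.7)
The plan is to reduce all three identities to the quadratic relation \eqref{E3} together with \eqref{E5}, \eqref{E6}, and the braid relation \eqref{E2}. The only tool needed is a clean formula for $E_iT_i^2$, which follows by multiplying \eqref{E3} on the left by $E_i$ and using $E_i^2=E_i$:
\[
E_iT_i^2 \;=\; E_i + (u-1)E_i^2 + (u-1)E_i^2T_i \;=\; uE_i + (u-1)E_iT_i.
\]
I would record this as a preliminary computation and then tackle the three parts.

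For (i), I would take the expression $X := T_i + (u^{-1}-1)E_i + (u^{-1}-1)E_iT_i$ on the right, multiply $T_iX$, and, after pushing all $T_i$'s past the $E_i$'s via \eqref{E6}, substitute the quadratic relation \eqref{E3} for $T_i^2$ and the identity above for $E_iT_i^2$. The result groups the terms as a scalar multiple of $E_i$ and a scalar multiple of $E_iT_i$; the two scalars
$(u-1)+(u^{-1}-1)u$ and $(u-1)+(u^{-1}-1)+(u^{-1}-1)(u-1)$
both collapse to $0$, leaving $T_iX=1$. A symmetric computation gives $XT_i=1$, so $X=T_i^{-1}$.

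For (ii), once (i) is established, the braid relation \eqref{E2} reads $T_iT_jT_i=T_jT_iT_j$ when $|i-j|=1$. Multiplying on the left by $T_j^{-1}$ and on the right by $T_i^{-1}$ gives $T_j^{-1}T_iT_j = T_iT_jT_i^{-1}$ directly, with no further work.

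For (iii), the computation is $T_i^3 = T_i\cdot T_i^2$; expanding via \eqref{E3} and inserting the boxed identity for $E_iT_i^2$ yields
\[
T_i^3 \;=\; T_i + (u-1)E_iT_i + (u-1)\bigl(uE_i + (u-1)E_iT_i\bigr) \;=\; T_i + u(u-1)E_i(1+T_i),
\]
and the factor $(u-1)E_i(1+T_i)$ is $T_i^2-1$ by \eqref{E3}, so $T_i^3 = T_i + u(T_i^2-1)$, which rearranges to the asserted cubic. No step is really an obstacle; the only place where an error would be easy is the arithmetic collapsing the two scalars in (i), so I would spell that bit out carefully and leave the other manipulations as short displays.
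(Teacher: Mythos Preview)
Your proof is correct and is exactly the kind of direct verification from the defining relations that the paper has in mind; the paper itself does not spell out a proof, merely stating that these relations arise directly from the defining relations of $\mathcal{E}_n$. Your isolation of the auxiliary identity $E_iT_i^2 = uE_i + (u-1)E_iT_i$ is a clean way to organize the arithmetic, and all three parts go through as you describe.
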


\smallbreak

Now, we extract  some useful results from 	\cite{rhJAC}. For $i<j$, we define $E_{i,j}$ as
 \begin{equation}\label{Eij}
 E_{i,j} = \left\{\begin{array}{ll}
 E_i & \text{for} \quad j = i +1\\
 T_i \cdots T_{j-2}E_{j-1}T_{j-2}^{-1}\cdots T_{i}^{-1}& \text{otherwise.}
 \end{array}\right.
 \end{equation}

 For any  nonempty  subset $J$     of $\bf n$ we define $E_J=1$  if  $|J|=1$ and
$$
 E_J := \prod_{(i,j )\in J\times J, i<j}  E_{i,j}.
$$

Note that $E_{\{i,j\}} = E_{i,j}$. Also note that
in  \cite[Lemma 4]{rhJAC} it is proved that $E_J$ can be computed as
 \begin{equation} \label{EJ}
E_{J} = \prod_{j\in J,\, j\not= i_0}E_{i_0,j}  \quad  \text{where} \quad i_0={\rm min}(J).
\end{equation}
In a similar  way  one  proves  that
 $E_J$ can be computed, writing $J=\{j_0,j_1,\dots,j_m\}$, with $j_i<j_{i+1}$, as
 \begin{equation}  \label{EJI}
E_{J} = \prod_{i=1}^m E_{j_{i-1}, j_i}.
\end{equation}
Moreover,
for $I = (I_1, \ldots , I_m) \in \mathsf{P}({\bf n})$,  we define
 $E_I$  by
 \begin{equation}\label{EI}
 E_I = \prod_{k}E_{I_k}.
 \end{equation}

The action of $S_n$ on $\mathsf{P}({\bf n})$,  transferred to  the elements $E_{I}$, is  given by the
following formulae

\begin{equation}\label{w(E)}
T_w E_I T_w^{-1} =  E_{w(I)}\qquad (\text{see \cite[Corollary 1]{rhJAC}}).
\end{equation}

\begin{theorem} \cite[Corollary 3]{rhJAC} \label{basEn}
The set $\mathcal{B}_n=\{ T_w E_I \,; \,w\in S_n,\, I\in  \mathsf{P}({\bf n})\}$ is a linear basis of
${\mathcal E}_n$. Hence the dimension of ${\mathcal E}_n$ is $b_nn!$.
\end{theorem}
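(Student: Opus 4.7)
The plan is to prove separately that $\mathcal{B}_n$ spans $\mathcal{E}_n$ and that it is linearly independent; since $|S_n|\cdot|\mathsf{P}(\mathbf{n})|=n!\,b_n$, these two facts will give the stated dimension.

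For spanning, I would set up a normal-form rewriting on monomials in the generators $T_i,E_i$. The basic moves are: (a) apply the braid relations (\ref{E1})--(\ref{E2}) and the quadratic relation (\ref{E3}) to straighten products of $T_i$'s into $K$-linear combinations $\sum_w c_w T_w$, with each $T_i^2$-cancellation contributing additional summands carrying an $E_i$-factor; (b) use the identity $E_IT_w=T_wE_{w^{-1}(I)}$, which follows immediately from (\ref{w(E)}), to push every $E$-factor past every $T$-factor to the right; (c) use (\ref{E4})--(\ref{E5}) together with (\ref{EJ}) and (\ref{EJI}) to collapse a product $E_{J_1}\cdots E_{J_k}$ into a single $E_I$ with $I=J_1\ast\cdots\ast J_k$. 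A double induction --- on word length, with a secondary measure counting out-of-order $E$-letters --- then shows that any word in the generators rewrites as a $K$-linear combination of elements of $\mathcal{B}_n$.

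For linear independence, the fastest route using material already in the paper is via Remark \ref{EontoH}(2): for $d\ge n$ the homomorphism $\psi_n:\mathcal{E}_n\to \mathrm{Y}_{d,n}$ is injective, so it suffices to prove that the images $\psi_n(T_wE_I)=g_we_I$ are linearly independent in $\mathrm{Y}_{d,n}$. This follows from the standard Juyumaya-type basis of $\mathrm{Y}_{d,n}$ by a triangular change of basis exchanging monomials in the framing generators $t_i$ for products of the idempotents $e_i$. As an alternative, one could construct a faithful Jimbo-type tensor representation of $\mathcal{E}_n$ directly and check that the operators assigned to the elements of $\mathcal{B}_n$ are linearly independent.

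The main obstacle is step (c) of the spanning argument. Rule (b) is essentially a one-line consequence of (\ref{w(E)}); rule (a) reproduces the classical Hecke-algebra normal-form calculation, with only the extra bookkeeping of the $E$-terms spawned by (\ref{E3}). Step (c), by contrast, requires genuine combinatorial input: one must verify that an arbitrary product of the $E_{i,j}$'s collapses to $E_I$ for an appropriate $I\in\mathsf{P}(\mathbf{n})$, and this is where the poset structure of $(\mathsf{P}(\mathbf{n}),\preceq)$, encoded in the operation $\ast$ and in (\ref{EJ})--(\ref{EJI}), really enters the proof.
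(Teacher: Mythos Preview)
The paper does not supply its own proof: the theorem is quoted from \cite{rhJAC}, so there is no in-paper argument to compare against directly.

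Your spanning argument matches the one carried out in \cite{rhJAC}, and your identification of step (c) as the crux is accurate: the substantive work is to show that the $E_{i,j}$ commute, are idempotent, and satisfy the transitivity relation $E_{i,j}E_{j,k}=E_{i,j}E_{i,k}$, after which any product of them collapses to a single $E_I$.

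Your primary route to linear independence, however, is circular as stated. The injectivity of $\psi_n$ that you invoke from Remark \ref{EontoH}(2) is drawn from \cite{esry}, a paper that postdates \cite{rhJAC} and builds on the very basis theorem you are trying to establish. The fix is easy: drop the appeal to injectivity altogether. Once you know $\mathcal{B}_n$ spans, proving directly that the images $g_we_I$ are linearly independent in ${\rm Y}_{d,n}$ for $d\ge n$ (your triangular argument in the framing variables) gives both the linear independence of $\mathcal{B}_n$ in $\mathcal{E}_n$ and the injectivity of $\psi_n$ in one stroke; citing the latter as a hypothesis is at best redundant. Your stated alternative, via a faithful Jimbo-type tensor representation of $\mathcal{E}_n$, is in fact the method actually used in \cite{rhJAC}.
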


The following  corollary can be found also in \cite[Proposition
1]{aijuICTP1}, cf. \cite[Section 5]{AJlinks}.

\begin{corollary}\label{atmost}

Any word  in the defining generators of $\mathcal{E}_n$ can be written
as a linear combination of words  in the defining generators of
$\mathcal{E}_n$, containing at most  one element of  the   set $\{ T_{n-1}, E_{n-1}, T_{n-1}E_{n-1} \}$.

\end{corollary}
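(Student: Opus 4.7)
The plan is to prove the equivalent reformulation
\[
\mathcal{E}_n \;=\; \mathcal{E}_{n-1} \;+\; \mathcal{E}_{n-1}T_{n-1}\mathcal{E}_{n-1} \;+\; \mathcal{E}_{n-1}E_{n-1}\mathcal{E}_{n-1} \;+\; \mathcal{E}_{n-1}T_{n-1}E_{n-1}\mathcal{E}_{n-1},
\]
where $\mathcal{E}_{n-1}\subseteq\mathcal{E}_n$ denotes the subalgebra generated by $T_1,\dots,T_{n-2},E_1,\dots,E_{n-2}$. By Theorem~\ref{basEn} it is enough to verify this on every basis element $T_w E_I$. Using the canonical form (\ref{wcanonico}), I write $T_w = T_{w'}T_{w_{n-1}}$ with $T_{w'}\in\mathcal{E}_{n-1}$ and $T_{w_{n-1}}\in\{1\}\cup\{T_{n-1}T_{n-2}\cdots T_k\}$, so $T_w$ carries at most one $T_{n-1}$. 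For $E_I$, if $n$ is a singleton block then $E_I\in\mathcal{E}_{n-1}$; otherwise, letting $J=\{j_0<\cdots<j_{m-1}<n\}$ be the block of $I$ containing $n$, I order the blocks of $I$ so that $J$ comes last and then combine (\ref{EI}), (\ref{EJI}) and (\ref{Eij}) to get
$E_I \;=\; D\cdot T_{j_{m-1}}\cdots T_{n-2}\,E_{n-1}\,T_{n-2}^{-1}\cdots T_{j_{m-1}}^{-1}$
with $D\in\mathcal{E}_{n-1}$; this exposes a single $E_{n-1}$ factor.

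Putting these two decompositions together, every basis element takes the shape $A\,X\,B\,Y\,C$ with $A,B,C\in\mathcal{E}_{n-1}$, $X\in\{1,T_{n-1}\}$ and $Y\in\{1,E_{n-1}\}$. Three of the four subcases are immediate: whenever $X=1$ or $Y=1$, the element sits directly in $\mathcal{E}_{n-1}$, in $\mathcal{E}_{n-1}T_{n-1}\mathcal{E}_{n-1}$, or in $\mathcal{E}_{n-1}E_{n-1}\mathcal{E}_{n-1}$ respectively. The main obstacle is the remaining case $X=T_{n-1}$, $Y=E_{n-1}$, which produces $A\,T_{n-1}\,B\,E_{n-1}\,C$; because an arbitrary $B\in\mathcal{E}_{n-1}$ need not commute with $E_{n-1}$, one has to rearrange so that $T_{n-1}$ and $E_{n-1}$ either become adjacent or one of them disappears.

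To settle this case I would expand $B$ in the Ryom--Hansen basis of $\mathcal{E}_{n-1}$, reducing the analysis to $T_{n-1}T_v E_L E_{n-1}$ for $v\in S_{n-1}$ and $L\in\mathsf{P}(\mathbf{n-1})$. Relation (\ref{E4}) moves $E_L$ past $E_{n-1}$. The canonical form of $v$ gives $T_v = T_{v'}T_{v_{n-2}}$, where $T_{v'}$ only involves $T_1,\dots,T_{n-3}$ and hence commutes with both $T_{n-1}$ and $E_{n-1}$ by (\ref{E1}) and (\ref{E7}); so $T_{v'}$ can be moved to the left. What remains inside is $T_{n-1}T_{v_{n-2}}E_{n-1}$ with $T_{v_{n-2}}\in\{1,T_{n-2}T_{n-3}\cdots T_l\}$. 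If $T_{v_{n-2}}=1$, the $E_{n-1}$ lands adjacent to $T_{n-1}$ and the term sits in $\mathcal{E}_{n-1}T_{n-1}E_{n-1}\mathcal{E}_{n-1}$. Otherwise, after pushing $T_{n-3}\cdots T_l$ past $E_{n-1}$ via (\ref{E7}), we are left with the factor $T_{n-1}T_{n-2}E_{n-1}$, and here the crucial step is the identity
$T_{n-1}T_{n-2}E_{n-1} \;=\; E_{n-2}T_{n-1}T_{n-2},$
which is exactly relation (\ref{E9}) with $i=n-2,\,j=n-1$; it replaces $E_{n-1}$ by $E_{n-2}\in\mathcal{E}_{n-1}$ and lands the term in $\mathcal{E}_{n-1}T_{n-1}\mathcal{E}_{n-1}$. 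The substance of the argument is thus pinpointing this single non--trivial application of (\ref{E9}); everything else reduces to pure commutation bookkeeping via (\ref{E1}), (\ref{E4}) and (\ref{E7}).
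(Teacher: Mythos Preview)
Your argument is correct and is precisely the intended one: the paper does not give a proof beyond declaring the statement a corollary of Theorem~\ref{basEn} and citing \cite[Proposition~1]{aijuICTP1}, so what you have written is exactly the detail that is being left implicit. Your handling of the one nontrivial case---using $T_{n-1}T_{n-2}E_{n-1}=E_{n-2}T_{n-1}T_{n-2}$ from relation~(\ref{E9}) to absorb the stray $E_{n-1}$ when both $T_{n-1}$ and $E_{n-1}$ are present---is the right move and completes the argument.
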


\begin{remark}\rm
From the natural inclusions  $S_n\subset S_{n+1}$ and   $\mathsf{P}({\bf n})\subset \mathsf{P}({\bf n} \cup \{n+1\})$, it follows that  $\mathcal{B}_n$  can be identified with a linearly independent subset of  $\mathcal{E}_{n+1}$, still denoted   by $\mathcal{B}_n$, which is contained in  $\mathcal{B}_{n+1}$. Then,
 $\mathcal{E}_n$ can be regarded as a subalgebra of
$\mathcal{E}_{n+1}$. In  particular, we derive  the following tower of algebras:
\begin{equation}\label{tower}
\mathcal{E}_1 \subset \mathcal{E}_2 \subset \cdots \subset \mathcal{E}_n \subset \cdots
\end{equation}
Note  that   every element of
$\mathcal{B}_n\mathcal{B}_n$ is a linear combination of  elements of
$\mathcal{B}_n$.
 \end{remark}

\subsection{}\,

  Because  of  (\ref{wcanonico}),
we have that for every $w\in S_n$  the
 element $T_w\in\mathcal{B}_n$  can be written uniquely as
$$
T_w= T_{w_1}T_{w_2}\cdots T_{ w_{n-1}}
$$
where
$$
T_{w_i}\in \{1, T_i, T_iT_{i-1},\ldots , T_iT_{i-1}\cdots T_1 \}.
$$
Set $\mathbb{T}_{i,0}= 1$ and for $k\in\{1, \ldots , i\}$, define
$$
\mathbb{T}_{i,k} = T_iT_{i-1}\cdots T_k.
$$
Thus the elements  of the basis $\mathcal{B}_n$ can be rewritten as
\begin{equation}\label{canonicalform}
\mathbb{T}_{1,k_1}  \mathbb{T}_{2,k_2}\cdots \mathbb{T}_{n-1,k_{n-1}}E_I
\end{equation}
where $ k_j\in\{0,\ldots ,  j\}$ , $j\in \{1, \ldots , n-1\}$ and $I\in  \mathsf{P}({\bf n})$.

\begin{notation} It is convenient to denote  $\check{\mathbb{T}}_{i,k}$  the element obtained by removing $T_i$ from $\mathbb{T}_{i,k}$, that is,
$
\check{\mathbb{T}}_{i,k} = T_{i-1}\cdots T_k.
$
Consequently, $
\check{\check{\mathbb{T}}}_{i,k} = T_{i-2}\cdots T_k.
$
\end{notation}
Using  the   defining relations   of    $\mathcal{E}_n$ we  obtain the following useful relations
\begin{equation}\label{Tt}
\mathbb{T}_{i,k}T_j =
\left\{\begin{array}{ll}
\mathbb{T}_{i,k+1} + (u-1)\mathbb{T}_{i,k+1}E_k( 1+T_k ) & \qquad \text{for}\quad  j=k\\
 \mathbb{T}_{i,j}  & \qquad \text{for}\quad j= k-1\\
 T_j\mathbb{T}_{i,k}  & \qquad \text{for}\quad j\in\{1,\ldots , k-2\}\\
 T_{j-1}\mathbb{T}_{i,k}  & \qquad \text{for}\quad j\in\{k+1,\ldots , i\}.
 \end{array}\right.
\end{equation}
We will    employ  also the following relations  which   are  obtained  using  only  the  braid relations:


\begin{equation}\label{tTT}
T_i\mathbb{T}_{i-1,r}\mathbb{T}_{i,s} =
\left\{\begin{array}{ll}
 \mathbb{T}_{i-1,s-1} \mathbb{T}_{i,r} & \qquad \text{for}\quad 0<r< s\\
\mathbb{T}_{i-1,r} \mathbb{T}_{i,r}  \mathbb{T}_{r,s}  & \qquad \text{for}\quad  r\ge s.\\
  \end{array}\right.
\end{equation}

Notice that:
\begin{eqnarray}\label{tTT1}
\begin{array}{rcl}
\mathbb{T}_{i-1,r} \mathbb{T}_{i,r}  \mathbb{T}_{r,s}   &  = &
\mathbb{T}_{i-1,r} \mathbb{T}_{i,r+1}   \mathbb{T}_{r-1,s} +
(u-1)\mathbb{T}_{i-1,r} \mathbb{T}_{i,r+1} E_r  \mathbb{T}_{r-1,s}\\
& & +
(u-1)\mathbb{T}_{i-1,r} \mathbb{T}_{i,r+1} E_r  \mathbb{T}_{r,s}.
\end{array}
\end{eqnarray}


Also,   from (\ref{w(E)}) we get

\begin{equation}\label{TE_I}
\mathbb{T}_{i,j}E_I = E_{\theta_{i,j}(I)} \mathbb{T}_{i,j}\quad
\end{equation}
where $\theta_{i,j} := s_is_{i-1}\cdots s_{j}$.

For every    $I\in  \mathsf{P}({\bf n})$   and  $k <n$, we  define
\begin{equation}\label{tau} \tau_{n,k}(I)=(I \ast \{k,n\}) \backslash n.\end{equation}

\begin{examples}

Let   $I=(  \{ 1,2,4,6 \}, \{3, 5\} ) \in \mathsf{P}({\bf 6})$,  then
$$\tau_{6,1}(I)= (  \{1,2,4 \},    \{3, 5\} ) \qquad \text{and} \qquad  \tau_{6,3}(I)= ( \{ 1,2 ,3,4, 5\} ).$$

If $I=(  \{ 3, 5,6 \}  )$ then
 $$\tau_{6,3}(I)= (\{ 3,5 \}    )  \qquad \text{and} \qquad  \tau_{6,2}(I)=( \{ 2,3,5 \} ).$$

\end{examples}

\section{Markov trace}\label{sectionmarkovtrace}

In this section we prove that   the bt--algebra  supports a Markov trace. To do this, we use  the method  of relative traces   taking as main reference \cite{chpoArxiv}, (see  also \cite{isog1, isog2, iski}).   Roughly, the method
consists in    defining certain linear maps $\varrho_{n}$, called relative traces, from $\mathcal{E}_{n}$ in $\mathcal{E}_{n-1}$, associated to the tower of the algebras (\ref{tower}).
 Then we prove that the  composition of these  linear maps is indeed the desired Markov trace (see Theorem \ref{trace}).


\subsection{}\label{sectionmarkovtrace1}

From now on we fix two   parameters  $\mathsf{A}$ and $\mathsf{B}$ in $K$.   However, when needed,  we consider $\mathsf{A}$ and $\mathsf{B}$ as variables and  work with the algebras $\mathcal{E}_n\otimes_K K(\mathsf{A}, \mathsf{B})$ which we denote for simplicity by the same symbols $\mathcal{E}_n$.

\begin{definition}\label{rho}
  For every integer $n>1$, let  $\varrho_n$ be  the linear map from ${\mathcal E}_n$ to ${\mathcal E}_{n-1}$  defined on the basis $\mathcal{B}_n$ as follows:
$$
\varrho_n(\mathbb{T}_{1,k_1}  \mathbb{T}_{2,k_2}\cdots \mathbb{T}_{n-1,k_{n-1}}E_I)
= \left\{ \begin{array}{ll}
 \mathbb{T}_{1,k_1}  \mathbb{T}_{2,k_2}\cdots \mathbb{T}_{n-2,k_{n-2}}E_I &  \text{for}\quad k_{n-1}=0,\quad n\not\in \Supp(I)
\\
 \mathsf{B}\mathbb{T}_{1,k_1}  \mathbb{T}_{2,k_2}\cdots \mathbb{T}_{n-2,k_{n-2}}E_{I\backslash n} &   \text{for}\quad k_{n-1}=0, \quad n\in \Supp(I) \\
\mathsf{A}\mathbb{T}_{1,k_1}  \mathbb{T}_{2,k_2}\cdots \check{\mathbb{T}}_{n-1,k_{n-1}}E_{\tau_{n,k_{n-1}}(I)} &
  \text{for}\quad k_{n-1}\not=0.
 \end{array}\right.
$$
 \end{definition}
\smallbreak

Observe that  $\varrho_n$ acts as the identity on $\mathcal{E}_{n-1}$, hence $\varrho_n(1) = 1$, for all $n$.
 Note also that,
from the definition of the $\varrho_n$'s,  it   follows that they satisfy
the following:	
\begin{eqnarray}
\label{RT4}
\varrho_n(T_{n-1}) = \varrho_n(E_{n-1}T_{n-1})
& =  &
\mathsf{A} \\
\label{RT5}
\varrho_n(E_{n-1})
& =  &
\mathsf{B}.
\end{eqnarray}

\begin{remark} \rm
The  reason  of  the  equality  (\ref{RT4}) follows from  the properties of  the  relative  trace  together  with   the  defining relation  (\ref{E8}) of the  bt--algebra. Indeed, assume    that $\A := \varrho_n(E_{n-1}T_{n-1})$ and $\A^{\prime} := \varrho_n( T_{n-1})$;  then
\begin{eqnarray*}
E_{n-2}\varrho_n(E_{n-1}T_{n-1}) &  = & \varrho_n(E_{n-2}E_{n-1}T_{n-1})\\
& = & \varrho_n(E_{n-1}E_{n-2}T_{n-1})\\
& = & \varrho_n(E_{n-2}T_{n-1}E_{n-2})\\
& = & E_{n-2}\varrho_n(T_{n-1})E_{n-2}.
\end{eqnarray*}
Therefore $E_{n-2}\varrho_n(E_{n-1}T_{n-1}) = E_{n-2}\varrho_n(T_{n-1})$, which implies  $\A = \A^{\prime}$.
\end{remark}

 We are going to prove   the following theorem.


\begin{theorem} \label{relativet} The family   $\{ \varrho_n \}_{n>1}$ satisfies,  for all $ X,Z\in {\mathcal E}_{n-1}$ and $Y\in {\mathcal E}_n$:
\begin{eqnarray}
\label{RT1}
\varrho_{n}(XYZ)
& = &
 X\varrho_n(Y)Z \\
\label{RT2}
\varrho_{n-1}(\varrho_n(T_{n-1}Y))
& = &
\varrho_{n-1}(\varrho_n(YT_{n-1}))\\
\label{RT3}
\varrho_{n-1}(\varrho_n(E_{n-1}Y))
& = &
\varrho_{n-1}(\varrho_n(YE_{n-1})).
\end{eqnarray}
\end{theorem}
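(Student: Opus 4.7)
The plan is to verify the three properties by reducing to the Ryom--Hansen basis $\mathcal{B}_n$ in the canonical form (\ref{canonicalform}), exploiting the auxiliary identities (\ref{Tt}), (\ref{tTT}), (\ref{tTT1}), (\ref{TE_I}) and (\ref{w(E)}), together with a case analysis indexed by the exponent $k_{n-1}$ and by whether $n \in \Supp(I)$.

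For (RT1), linearity and iteration reduce the claim to the two statements $\varrho_n(gY) = g\,\varrho_n(Y)$ and $\varrho_n(Yg) = \varrho_n(Y)\,g$ where $g \in \{T_i, E_i \mid 1 \leq i \leq n-2\}$ and $Y = \mathbb{T}_{1,k_1}\cdots\mathbb{T}_{n-1,k_{n-1}}E_I$ runs over $\mathcal{B}_n$. The key observation is that the value $\varrho_n(Y)$ is determined only by the pair $(k_{n-1},\ n \in \Supp(I)?)$. After rewriting $gY$ in canonical form via (\ref{Tt}), every term that appears has the same $\mathbb{T}_{n-1,k_{n-1}}$ factor and the same behaviour of $E_I$ at position $n$, because the relation (\ref{TE_I}) associated with the factors $\mathbb{T}_{j,k}$ with $j\leq n-2$ is conjugation by a permutation of $S_{n-1}$, which fixes $n$. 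Thus the sorted expansion of $gY$ lies in the single $\varrho_n$-case of $Y$, and the identity follows by moving $g$ past $\mathbb{T}_{n-1,k_{n-1}}$ in $\varrho_n(Y)$ by the same rules. The right-multiplication version is analogous.

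For (RT2) and (RT3), I would use Corollary \ref{atmost} together with (RT1) to write any $Y \in \mathcal{E}_n$ as
\[
Y \;=\; Y_0 \;+\; \sum U_i\,T_{n-1}\,V_i \;+\; \sum U_j'\,E_{n-1}\,V_j' \;+\; \sum U_k''\,T_{n-1}E_{n-1}\,V_k''
\]
with $Y_0$ and the $U_\bullet,V_\bullet$ in $\mathcal{E}_{n-1}$. Applying (RT1) at level $n$ and then at level $n-1$, (RT2) and (RT3) reduce to checking a finite list of identities of the form
\[
\varrho_{n-1}\bigl(\varrho_n(T_{n-1}\,U\,g\,V)\bigr) \;=\; \varrho_{n-1}\bigl(\varrho_n(U\,g\,V\,T_{n-1})\bigr)
\]
with $g \in \{1,T_{n-1},E_{n-1},T_{n-1}E_{n-1}\}$ and $U,V$ generators of $\mathcal{E}_{n-1}$; the corresponding statement with $T_{n-1}$ replaced by $E_{n-1}$ gives (RT3). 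Generators with index at most $n-3$ commute with $T_{n-1}$ and $E_{n-1}$ and are absorbed by (RT1) at level $n-1$. The only genuinely new cases are $U$ or $V$ equal to $T_{n-2}$ or $E_{n-2}$, and these follow from the defining relations (\ref{E8}), (\ref{E9}), Proposition \ref{relations}, and from the normalisations (\ref{RT4}), (\ref{RT5}) together with the remark that $\varrho_n(E_{n-1}T_{n-1})=\varrho_n(T_{n-1})$.

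The main obstacle will be the verification of (RT1): although conceptually straightforward, each choice of $g \in \{T_i,E_i\}$ produces several subcases according to how $i$ compares with the pivots $k_{n-2}$ and $k_{n-1}$, and every invocation of (\ref{Tt}) contributes correction terms from the quadratic relation (\ref{E3}) that must be re-sorted into canonical form. Keeping track of the movement of $E_I$ through the $\mathbb{T}_{j,k}$-factors via (\ref{w(E)}) and (\ref{EJI}), and confirming that the various correction terms all land in cases of $\varrho_n$ that produce compatible outputs, is where the argument becomes technical.
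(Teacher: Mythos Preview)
Your plan for (\ref{RT1}) is essentially the paper's: reduce by linearity to $Y\in\mathcal{B}_n$ and $X,Z$ single generators, then do a case analysis in $k_{n-1}$ and in whether $n\in\Supp(I)$. One point you underplay: for left multiplication by $T_m$ with $m\leq n-2$ and $k_{n-1}\neq 0$, the correction factor $E_r$ coming from (\ref{tTT1}) must be pushed all the way to the right through $\mathbb{T}_{n-1,k_{n-1}}$, and the conjugating permutation $\theta_{n-1,k_{n-1}}^{-1}$ does \emph{not} fix $n$. So the resulting pair $\{a',b'\}$ can contain $n$, and the check is not that ``the behaviour of $E_I$ at position $n$ is unchanged'' but rather the set--partition identity
\[
\tau_{n,k_{n-1}}(I)\ast\{a'',b''\}\;=\;\tau_{n,k_{n-1}}\bigl(I\ast\{a',b'\}\bigr),
\]
where $\{a'',b''\}$ is obtained from $\{a',b'\}$ by the transposition $(k_{n-1},n)$. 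This is exactly the computation the paper isolates (their equation (\ref{abab})); it is the technical heart you allude to, but it needs to be stated and proved, not absorbed into a blanket ``same $\varrho_n$-case'' remark.

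For (\ref{RT2}) and (\ref{RT3}) your reduction does not go through. After writing $Y=\sum U_i\,g_i\,V_i$ with $U_i,V_i\in\mathcal{E}_{n-1}$ via Corollary~\ref{atmost}, you claim that (\ref{RT1}) at levels $n$ and $n-1$ reduces the problem to $U,V$ single generators. It does not: (\ref{RT1}) at level $n$ lets you peel $V_i$ off the right of $\varrho_n(T_{n-1}U_ig_iV_i)$ and $U_i$ off the left of $\varrho_n(U_ig_iV_iT_{n-1})$, producing
\[
\varrho_{n-1}\bigl(\varrho_n(T_{n-1}U_ig_i)\,V_i\bigr)
\quad\text{versus}\quad
\varrho_{n-1}\bigl(U_i\,\varrho_n(g_iV_iT_{n-1})\bigr),
\]
which are not comparable. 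Factors of $\mathcal{E}_{n-2}$ can indeed be commuted past $T_{n-1},E_{n-1}$ and absorbed by (\ref{RT1}) at level $n-1$, but a general $U_i\in\mathcal{E}_{n-1}$ is a word containing several $T_{n-2},E_{n-2}$ separated by elements of $\mathcal{E}_{n-2}$, and those intermediate $\mathcal{E}_{n-2}$ pieces are trapped between non-commuting neighbours; they cannot be stripped down to a single generator.

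The paper avoids this by working directly with $Y\in\mathcal{B}_n$ in canonical form. Then (\ref{RT1}) peels off the genuine $\mathcal{E}_{n-2}$-prefix $\mathbb{T}_{1,k_1}\cdots\mathbb{T}_{n-3,k_{n-3}}$, and one is left with comparing $\varrho_{n-1}\varrho_n$ on $T_{n-1}\,\mathbb{T}_{n-2,k_{n-2}}\mathbb{T}_{n-1,k_{n-1}}E_J$ and on $\mathbb{T}_{n-2,k_{n-2}}\mathbb{T}_{n-1,k_{n-1}}E_J\,T_{n-1}$ (similarly for $E_{n-1}$). The computation then splits into four cases according to whether $k_{n-2}$ and $k_{n-1}$ vanish, and in each case the arbitrary set--partition $J$ has to be carried through two successive applications of $\tau$; the equality comes down to identities of the type $\tau_{n-1,k}\bigl(\tau_{n,k'}(J\ast\{\cdots\})\bigr)=\tau_{n-1,k}\bigl(\tau_{n,k''}(J\ast\{\cdots\})\bigr)$. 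The defining relations among $T_{n-1},T_{n-2},E_{n-1},E_{n-2}$ by themselves are not sufficient; the work is in these partition identities, and they do not reduce to a finite list independent of $J$.
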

\begin{proof}
 The Lemma \ref{relativepropertie1} below implies (\ref{RT1}) and
 we will  prove (\ref{RT2}) and (\ref{RT3})  in Lemma \ref{relativepropertie3}.
 \end{proof}


\begin{lemma}\label{relativepropertie1}
For all $ X,Z\in {\mathcal E}_{n-1}$ and $Y\in {\mathcal E}_n$, we have:
\begin{enumerate}
\item[(i)] $\varrho_n( YZ) =  \varrho_n(Y)Z$
\item[(ii)] $\varrho_n( XY) =  X\varrho_n(Y)$.
\end{enumerate}
\end{lemma}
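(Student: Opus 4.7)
My plan is to verify both identities first on basis elements $Y\in\mathcal{B}_n$, reducing $X$ and $Z$ to single generators $T_j$ or $E_j$ with $j\leq n-2$ by linearity and an induction on word length in $\mathcal{E}_{n-1}$. The key structural observation that makes the bookkeeping manageable is: every basis element factors as $Y = T_v\cdot V$ with $T_v := \mathbb{T}_{1,k_1}\cdots\mathbb{T}_{n-2,k_{n-2}}\in\mathcal{E}_{n-1}$ and $V := \mathbb{T}_{n-1,k_{n-1}}E_I$, and inspection of Definition \ref{rho} shows $\varrho_n(Y) = T_v\cdot\bar\varrho(V)$, where in all three cases the ``tail value'' $\bar\varrho(V)$ -- namely $E_I$, $\mathsf{B}E_{I\setminus n}$, or $\mathsf{A}\check{\mathbb{T}}_{n-1,k_{n-1}}E_{\tau_{n,k_{n-1}}(I)}$ -- actually lies in $\mathcal{E}_{n-1}$.

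For (ii), with $X\in\{T_j,E_j\colon j\leq n-2\}$, I will expand $XT_v = \sum_\alpha c_\alpha T_{v_\alpha}E_{I'_\alpha}$ in the Ryom--Hansen basis of $\mathcal{E}_{n-1}$ (note $I'_\alpha\in\mathsf{P}(\mathbf{n-1})$), so that $XY = \sum_\alpha c_\alpha T_{v_\alpha}E_{I'_\alpha}\mathbb{T}_{n-1,k_{n-1}}E_I$. Using (\ref{TE_I}) I push each $E_{I'_\alpha}$ past $\mathbb{T}_{n-1,k_{n-1}}$, producing $\mathbb{T}_{n-1,k_{n-1}}E_{\sigma(I'_\alpha)\ast I}$ for an explicit permutation $\sigma=\theta_{n-1,k_{n-1}}^{-1}$. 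Each summand is then a basis element, so $\varrho_n$ applies termwise; comparing with $X\varrho_n(Y) = (XT_v)\bar\varrho(V)$ expanded the same way, (ii) reduces to the combinatorial identity that $\tau_{n,k_{n-1}}$ and the merge $\ast$ intertwine with the permutation actions coming from $\mathbb{T}_{n-1,k_{n-1}}$ and $\check{\mathbb{T}}_{n-1,k_{n-1}}$, whenever they are applied to a set--partition avoiding the index $n$.

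For (i), with $Z\in\{T_j,E_j\colon j\leq n-2\}$, the analysis is symmetric but easier. If $Z=E_j$, the absorption $E_I E_j = E_{I\ast j}$ puts $YZ$ directly in basis form; since $j\leq n-2$, merging the blocks of $j$ and $j+1$ commutes both with removing $n$ and with $\tau_{n,k_{n-1}}$, i.e.\ $(I\ast j)\setminus n = (I\setminus n)\ast j$ and $\tau_{n,k_{n-1}}(I\ast j) = \tau_{n,k_{n-1}}(I)\ast j$, which matches $\varrho_n(YZ)$ with $\varrho_n(Y)Z$ case by case. If $Z=T_j$, I first move $T_j$ across $E_I$ via (\ref{w(E)}), then across $\mathbb{T}_{n-1,k_{n-1}}$ via the four subcases of (\ref{Tt}); the subcase $j=k_{n-1}$ introduces extra terms carrying a factor $E_{k_{n-1}}(1+T_{k_{n-1}})$, but each such term is again expressible in the basis and verified termwise.

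The main obstacle throughout is the case $k_{n-1}\neq 0$, where the conjugation of $E$--factors by $\mathbb{T}_{n-1,k_{n-1}}$ interacts non--trivially with the ``capture'' operation $\tau_{n,k_{n-1}}$ used to define $\varrho_n$ on this branch. The decisive point that makes everything commute is that every auxiliary set--partition produced from the $\mathcal{E}_{n-1}$ side belongs to $\mathsf{P}(\mathbf{n-1})$, and all indices $j$ appearing in $X$ or $Z$ satisfy $j\leq n-2$; hence the index $n$ being tracked by $\tau_{n,k_{n-1}}$ never meets the permutation action, and the combinatorial identities reduce to routine verification.
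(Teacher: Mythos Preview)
Your proposal is correct and follows essentially the same strategy as the paper: reduce to $Y\in\mathcal{B}_n$ and $X,Z$ single generators, then verify case by case using combinatorial identities about how $\tau_{n,k_{n-1}}$, $\ast$, and $\backslash n$ interact with the permutation actions of $\theta_{n-1,k_{n-1}}$ and $\theta_{n-2,k_{n-1}}$.

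For part (i) your outline matches the paper's proof almost verbatim: move $T_j$ past $E_I$ via (\ref{w(E)}), then past $\mathbb{T}_{n-1,k_{n-1}}$ via the four subcases of (\ref{Tt}), with the subcase $j=k_{n-1}$ producing three terms from $T_j^2$ that are checked individually; and for $Z=E_j$, use $(I\ast j)\backslash n = (I\backslash n)\ast j$. For part (ii) there is a mild organizational difference. The paper writes $X\varrho_n(Y)=\mathsf{A}\mathbb{A}(T_m\mathbb{T}_{m-1,r}\mathbb{T}_{m,s})\mathbb{B}$ and $XY=\mathbb{A}(T_m\mathbb{T}_{m-1,r}\mathbb{T}_{m,s})\mathbb{B}'$, then expands the middle factor explicitly via (\ref{tTT}) and (\ref{tTT1}), tracking the resulting $R,S_1,S_2,S_3$ terms and pushing their $E$--factors through $\mathbb{B}$ or $\mathbb{B}'$; this produces the identities (\ref{abab}) and (\ref{par2}) for two--element blocks $\{a,b\}$, which are checked by hand. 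Your route---expand $XT_v$ abstractly in $\mathcal{B}_{n-1}$, then push each $E_{I'_\alpha}$ through $V$ or $\bar\varrho(V)$---is more uniform (it treats $X=T_m$ and $X=E_m$ on the same footing) and reduces everything to a single identity $\tau_{n,k}(\theta_{n-1,k}^{-1}(I')\ast I)=\theta_{n-2,k}^{-1}(I')\ast\tau_{n,k}(I)$ for $I'\in\mathsf{P}(\mathbf{n-1})$. This is exactly the content of the paper's (\ref{abab}) and (\ref{par2}) (those are the special case $I'=(\{a,b\})$), and the general case follows by iterating over the blocks of $I'$ or by a direct argument using that $\theta_{n-2,k}^{-1}\theta_{n-1,k}$ is the transposition $(k,n)$ and $k$ is a singleton in $\theta_{n-1,k}^{-1}(I')$. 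One small caution: your phrase ``the index $n$ \ldots\ never meets the permutation action'' is not literally true, since $\theta_{n-1,k_{n-1}}^{-1}$ sends $n-1$ to $n$; what makes the identity work is rather that $n$ is a singleton in $I'_\alpha$, so $k_{n-1}$ is a singleton in $\theta_{n-1,k_{n-1}}^{-1}(I'_\alpha)$.
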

\begin{proof}

From the linearity of $\varrho_n$, it follows that it is enough to prove the  lemma  when  $Y\in \mathcal{B}_n$ and $X, Z$ are the generators $T_1, \ldots ,T_{n-2}$ and $E_1, \ldots ,E_{n-2}$. We set along the proof of the lemma:
$$
Y= \mathbb{T}_{1,k_1}  \mathbb{T}_{2,k_2}\cdots \mathbb{T}_{n-1,k_{n-1}}E_I.
$$
 We prove now  the  claim (i).   We start with  the case in  which $Z$ is one of the generators $T_j$,
with $j\in\{1, \ldots ,n-2\}$.  We  have
\begin{equation}\label{ YZT}
  YZ=  \mathbb{T}_{1,k_1}  \mathbb{T}_{2,k_2}\cdots \mathbb{T}_{n-2,k_{n-2}}\mathbb{T}_{n-1,k_{n-1}}T_jE_{s_j(I)}
\end{equation}
 We shall distinguish now three  cases, labeled below as  Cases I, II and III.

{\bf Case I:}  $k_{n-1}=0$.

In the case $n\not\in \Supp(I)$, the claim follows since $\varrho_n$ acts as the identity. For the case  $n\in \Supp(I)$, we have:
$$
 \varrho_n(Y)Z = \mathsf{B} \mathbb{T}_{1,k_1}  \mathbb{T}_{2,k_2}\cdots \mathbb{T}_{n-2,k_{n-2}}E_{I\backslash n}T_j=\mathsf{B} \mathbb{T}_{1,k_1}
\mathbb{T}_{2,k_2}\cdots \mathbb{T}_{n-2,k_{n-2}}T_jE_{s_j(I\backslash n)}.
$$
On the other hand,  the expression (\ref{ YZT}) of $YZ$  can  be  written as a linear combination of elements of the
form $WE_{s_j(I)}$ with $W\in \mathcal{B}_{n-1}$.
Then, $\varrho_n( YZ) = \mathsf{B}  \mathbb{T}_{1,k_1}  \mathbb{T}_{2,k_2}\cdots \mathbb{T}_{n-2,k_{n-2}}T_jE_{s_j(I)\backslash n}$.
Since $s_j$ does not touch $n$, it follows that  $s_j(I\backslash n) = s_j(I)\backslash n$, hence    $ \varrho_n(Y)Z = \varrho_n( YZ)$.

{\bf Case II:}  $k_{n-1} \not = 0$ and $n\not\in \Supp(I)$.

Now, according to the commutation rules given in (\ref{Tt}), we shall distinguish four subcases.

$\ast$  Subcase $j= k_{n-1}-1$. We  have
\begin{equation}\label{( Y)Z}
 YZ=  \mathbb{T}_{1,k_1}  \mathbb{T}_{2,k_2}\cdots \mathbb{T}_{n-2,k_{n-2}}  \mathbb{T}_{n-1,j}E_{s_j(I)}.
\end{equation}
Since   $n\notin \Supp(s_j(I))$, according  to  Definition \ref{rho}
$$
\varrho_n( YZ)= \mathsf{A} \mathbb{T}_{1,k_1}  \mathbb{T}_{2,k_2}\cdots \mathbb{T}_{n-2,k_{n-2}}\check{\mathbb{T}}_{n-1,j}E_{s_j(I)},
$$
which is equal to $ \varrho_n(Y)Z$. Indeed,
$$
 \varrho_n(Y)Z =  (\mathsf{A}\mathbb{T}_{1,k_1}  \mathbb{T}_{2,k_2}\cdots \check{\mathbb{T}}_{n-1,k_{n-1}}E_{I})T_j =
\mathsf{A} \mathbb{T}_{1,k_1}  \mathbb{T}_{2,k_2}\cdots \check{\mathbb{T}}_{n-1,j}E_{s_j(I)}.
$$

$\ast$  Subcases $j < k_{n-1}-1$ and $k_{n-1}+1  \leq j\leq n-1 $ are totally  analogous to the subcase above.

$\ast$  Subcase $j= k_{n-1}$. We have $ \varrho_n(Y)Z =  \varrho_n(\mathbb{T}_{1,k_1}
\mathbb{T}_{2,k_2}\cdots \mathbb{T}_{n-1,k_{n-1}}E_I)T_j$. Then
\begin{eqnarray*}
 \varrho_n(Y)Z & = & \mathsf{A} \mathbb{T}_{1,k_1}  \mathbb{T}_{2,k_2}\cdots \check{\mathbb{T}}_{n-1,k_{n-1}}T_jE_{s_j(I)}\\
& = & \mathsf{A}  \mathbb{T}_{1,k_1}  \mathbb{T}_{2,k_2}\cdots \check{\mathbb{T}}_{n-1,j+1}T_j^2E_{s_j(I)}
\end{eqnarray*}
By splitting $T_j^2$, we obtain $ \varrho_n(Y)Z  =  W_1 + W_2 +W_3$,  where
\begin{eqnarray*}
W_1 & := & \mathsf{A} \mathbb{T}_{1,k_1}  \mathbb{T}_{2,k_2}\cdots \check{\mathbb{T}}_{n-1,j+1}E_{s_j(I)}\\
W_2 & := & (u-1)\mathsf{A} \mathbb{T}_{1,k_1}  \mathbb{T}_{2,k_2}\cdots \check{\mathbb{T}}_{n-1,j+1}E_jE_{s_j(I)} \\
W_3 & := &  (u-1)\mathsf{A} \mathbb{T}_{1,k_1}  \mathbb{T}_{2,k_2}\cdots \check{\mathbb{T}}_{n-1,j+1}T_jE_jE_{s_j(I)}.
\end{eqnarray*}
On the other hand:
\begin{eqnarray*}
 YZ & = &  \mathbb{T}_{1,k_1}  \mathbb{T}_{2,k_2}\cdots \mathbb{T}_{n-2,k_{n-2}}(\mathbb{T}_{n-1,j+1} + (u-1)\mathbb{T}_{n-1,j+1}E_j( 1+T_j ))E_{s_j(I)}\\
& = &
 W_1^{\prime} + W_2^{\prime} + W_3^{\prime}
\end{eqnarray*}
where
\begin{eqnarray*}
W_1^{\prime} & := &    \mathbb{T}_{1,k_1}  \mathbb{T}_{2,k_2}\cdots \mathbb{T}_{n-2,k_{n-2}}\mathbb{T}_{n-1,j+1} E_{s_j(I)}\\
W_2^{\prime} & := & (u-1)   \mathbb{T}_{1,k_1}  \mathbb{T}_{2,k_2}\cdots \mathbb{T}_{n-2,k_{n-2}}\mathbb{T}_{n-1,j+1}E_jE_{s_j(I)} \\
W_3^{\prime} & := & (u-1)   \mathbb{T}_{1,k_1}  \mathbb{T}_{2,k_2}\cdots \mathbb{T}_{n-2,k_{n-2}}\mathbb{T}_{n-1,j+1}T_jE_jE_{s_j(I)}.
\end{eqnarray*}
Now we observe  that $W_i = \varrho_n(W_i^{\prime})$.  Therefore  $ \varrho_n(Y)Z =\varrho_n( YZ)$.

{\bf Case III:}  $k_{n-1} \not = 0$ and $n\in \Supp(I)$.

Again, we will prove the claim using formulae (\ref{Tt}). Suppose $j= k_{n-1}-1$.  Using   Definition \ref{rho}, we  get
 $$
 \varrho_n( YZ)= \mathsf{A} \mathbb{T}_{1,k_1}  \mathbb{T}_{2,k_2}\cdots \mathbb{T}_{n-2,k_{n-2}}\check{\mathbb{T}}_{n-1,j}E_{\tau_{n,j}(s_j(I))}
 $$
where $\tau_{n,j}(s_j(I)) =( s_j(I)\ast \{j,n\})\backslash n $,  and
 $$ \varrho_n( Y)   =   \mathsf{A}  \mathbb{T}_{1,k_1}  \mathbb{T}_{2,k_2}\cdots \mathbb{T}_{n-2,k_{n-2}}
\check{\mathbb{T}}_{n-1,j+1}E_{ \tau_{n,j+1} (I)  }  $$
where
\begin{equation}\label{Qprime} \tau_{n,j+1} (I)  =( I \ast \{j+1,n\})  \backslash n.
\end{equation}
Observe  that,  since  $j<n-1$,
   $ \tau_{n,j+1} (I)=s_{n-1}(\tau_{n,j}(s_j(I)))  $. Therefore we  have
\begin{eqnarray*}
\varrho_n( YZ) & = & \mathsf{A} (\mathbb{T}_{1,k_1}  \mathbb{T}_{2,k_2}\cdots \mathbb{T}_{n-2,k_{n-2}}
\check{\mathbb{T}}_{n-1,j}E_{\tau_{n,j}(s_j(I))})\\
 & = & \mathsf{A} (\mathbb{T}_{1,k_1}  \mathbb{T}_{2,k_2}\cdots \mathbb{T}_{n-2,k_{n-2}}
\check{\mathbb{T}}_{n-1,j+1}T_jE_{\tau_{n,j}(s_j(I))})\\
& = & \mathsf{A} (\mathbb{T}_{1,k_1}  \mathbb{T}_{2,k_2}\cdots \mathbb{T}_{n-2,k_{n-2}}
\check{\mathbb{T}}_{n-1,j+1}E_{\tau_{n,j+1}  (I) })T_j\\
& = &
 \varrho_n(Y)Z.
\end{eqnarray*}
The cases  $j < k_{n-1}-1$ and $k_{n-1}+1  \leq j\leq n-1 $ are verified in analogous way.

Suppose now $j= k_{n-1}$. We have
$$ YZ=  \mathbb{T}_{1,k_1}  \mathbb{T}_{2,k_2}\cdots \mathbb{T}_{n-1,j}T_jE_{s_j(I)}$$
and
$$
  \varrho_n(YZ)= \varrho_n( \mathbb{T}_{1,k_1}  \mathbb{T}_{2,k_2}\cdots \mathbb{T}_{n-1,j+1}T_j^2 E_{s_{j}(I)})
  =
 V_1 + V_2 + V_3
$$
being
\begin{eqnarray*}
V_1 & := & \mathsf{A} \mathbb{T}_{1,k_1}  \mathbb{T}_{2,k_2}\cdots \check{\mathbb{T}}_{n-1,j+1}E_{\tau_{n,j+1}(s_j (I))}\\
V_2 & := & (u-1)\mathsf{A} \mathbb{T}_{1,k_1}  \mathbb{T}_{2,k_2}\cdots \check{\mathbb{T}}_{n-1,j+1}E_{\tau_{n,j+1}(s_j (I)) }E_j \\
V_3 & := &  (u-1)\mathsf{A} \mathbb{T}_{1,k_1}  \mathbb{T}_{2,k_2}\cdots \check{\mathbb{T}}_{n-1,j+1}T_j E_{ \tau_{n,j }(s_j(I))}E_j.
\end{eqnarray*}

 On  the  other  hand,  we  have
 \begin{eqnarray*}
  \varrho_n(Y)Z
 &=&
 \mathsf{A} (\mathbb{T}_{1,k_1}  \mathbb{T}_{2,k_2}\cdots \check{\mathbb{T}}_{n-1,j} E_{\tau_{n,j}(I)})T_j\\
 & = &
 \mathsf{A} \mathbb{T}_{1,k_1}  \mathbb{T}_{2,k_2}\cdots \check{\mathbb{T}}_{n-1,j+1}T_j^2E_{s_j(\tau_{n,j} (I ))}.\\
\end{eqnarray*}

  Splitting  $T_j^2$,  we  obtain $\varrho_n(Y)Z=V'_1+V'_2+V'_3$, where
\begin{eqnarray*}
V'_1 & = & \mathsf{A} \mathbb{T}_{1,k_1}  \mathbb{T}_{2,k_2}\cdots \check{\mathbb{T}}_{n-1,j+1}  E_{s_j(\tau_{n,j} (I ))}\\
V'_2 & = & (u-1)\mathsf{A} \mathbb{T}_{1,k_1}  \mathbb{T}_{2,k_2}\cdots \check{\mathbb{T}}_{n-1,j+1}E_jE_{s_j(\tau_{n,j} (I ))} \\
V'_3 & = &  (u-1)\mathsf{A} \mathbb{T}_{1,k_1}  \mathbb{T}_{2,k_2}\cdots \check{\mathbb{T}}_{n-1,j+1}T_jE_jE_{s_j(\tau_{n,j} (I ))}.
\end{eqnarray*}

We  have  therefore  to  verify   that  $V_i=V'_i$,  $i=1,2,3$.
$V'_1=V_1$  and  $V'_2=V_2$  since
 $$   s_j(\tau_{n,j} (I ))  =\tau_{n,j+1}(s_j (I)) .$$
 As for $V'_3$,  we  have
 $$ E_jE_{s_j(\tau_{n,j} (I ))}=E_{s_j(\tau_{n,j} (I ))  \ast \{j,j+1\}}, $$
 and
$$ s_j(\tau_{n,j} (I ))  \ast \{j,j+1\} =   s_j ( (I\ast \{j, n\})  \backslash n  )\ast \{j,j+1\}.$$
This  partition  is  the  same  as  that   in  the  expression of $V_3$, namely
 $$ \tau_{n,j }(s_j(I))\ast\{j,j+1\}   = ((  s_j(I)\ast \{j,n\})    \backslash n )\ast \{j,j+1\},  $$
since  $j<n-1$.
 Thus  we have  also $V_3=V'_3$.

To finish the proof of (i)   it  is left only to consider the case when   $Z=E_j$. We  have
 \begin{equation}\label{ YZE}
  YE_j=  \mathbb{T}_{1,k_1}  \mathbb{T}_{2,k_2}\cdots \mathbb{T}_{n-2,k_{n-2}}\mathbb{T}_{n-1,k_{n-1}}E_{I \ast j }.
\end{equation}
Observe  that  $ (I\backslash n )  \ast j = ( I \ast j )\backslash n$,  because $j<n-1$.
Applying      Definition \ref{rho}, we  get in  all  cases  $\varrho_n(Y)Z=\varrho_n(YZ)$  since at  the  end  of  the  left  and  right  sides we  have  respectively     $E_{(I \ast j) \backslash n} $
and  $E_{(I\backslash n )\ast j}$.

\smallbreak

 Now  we prove the claim (ii) of  the  lemma.  In the case $k_{n-1}=0$ and $n\not\in \Supp(I)$ the claim is  evident, since $Y\in {{\mathcal E}_{n-1}}$  and $\varrho_n$ acts as the identity on ${{\mathcal E}_{n-1}}$.

 In the case $k_{n-1}=0$ and $n\in \Supp(I)$, we have $Y = \mathbb{T}_{1,k_1}  \mathbb{T}_{2,k_2}\cdots \mathbb{T}_{n-2,k_{n-2}}E_I$.  Then
 $$
X\varrho_n(Y) = X  \mathbb{T}_{1,k_1}  \mathbb{T}_{2,k_2}\cdots \mathbb{T}_{n-2,k_{n-2}}E_{I\backslash n}.
 $$
 Now, to compute $\varrho_n(XY)$, we need
 to express $XY$ as linear combination of   elements of the basis $\mathcal{B}_n$,
 but in the case we are considering  it is enough to express $X^{\prime}:=X\mathbb{T}_{1,k_1}  \mathbb{T}_{2,k_2}\cdots \mathbb{T}_{n-2,k_{n-2}}$ as linear combination of   elements of $\mathcal{B}_{n-1}$, and then to put the element $E_I$ on the right of each term  of this linear combination. Thus,  $\varrho_n(XY)$ is the linear combination  obtained from  the  linear combination expressing  $X^{\prime}$,  by putting   on the right of each term the factor $E_{I\backslash n}$. Hence, we  deduce  that $X\varrho_n(Y) = \varrho_n(XY)$.

Suppose now  that $k_{n-1}\not=0$. Firstly, we   check   the claim for  $X=T_m$, where  $m\in\{1, \ldots ,n-2\}$.

We have $X\varrho_n(Y) = \mathsf{A}  T_m \mathbb{T}_{1,k_1}   \cdots \check{\mathbb{T}}_{n-1,k_{n-1}}E_{\tau_{n,k_{n-1}}(I)}$.
We  rewrite  it  as
\begin{equation}\label{trhoY}
 X \varrho_n(Y)= \mathsf{A}\mathbb{A} (T_m\mathbb{T}_{m-1,r}\mathbb{T}_{m,s}) \mathbb{B}
\end{equation}
where
\begin{eqnarray*}
\mathbb{A}
& := &
\mathbb{T}_{1,k_1}  \mathbb{T}_{2,k_2}\cdots \mathbb{T}_{m-2,k_{m-2}}\\
\mathbb{B}
& := &
\mathbb{T}_{m+1,k_{m+1}}\cdots \mathbb{T}_{n-2,k_{n-2}}\check{\mathbb{T}}_{n-1,k_{n-1}}E_{\tau_{n,k_{n-1}}(I)}
\end{eqnarray*}

On the other hand, we have
\begin{equation}\label{tY}
 X Y= \mathbb{A} (T_m\mathbb{T}_{m-1,r}\mathbb{T}_{m,s}) \mathbb{B}^{\prime}
\end{equation}
where   $0\leq r \leq m-1$, $0< s\leq m$ and
$$
\mathbb{B}^{\prime} := \mathbb{T}_{m+1,k_{m+1}}\cdots \mathbb{T}_{n-2,k_{n-2}}\mathbb{T}_{n-1,k_{n-1}}E_I.
$$

We will compare now $\varrho_n(XY)$ with $X\varrho_n(Y)$, distinguishing the cases  $r=0$ and $r\not=0$.


{\bf Case $r\not=0$}. By using   (\ref{tTT}) and later (\ref{tTT1}) we deduce:
$$
X\varrho_n(Y) =
\left\{\begin{array}{ll}
\mathsf{A}\mathbb{A} R \mathbb{B}& \qquad \text{for}\quad 0<r\leq s\\
\mathsf{A}\mathbb{A}S_1 \mathbb{B}+ (u-1)\mathsf{A}\mathbb{A}S_2 \mathbb{B}+ (u-1)\mathsf{A}\mathbb{A}S_3 \mathbb{B}& \qquad \text{for}\quad  s<r
\end{array}\right.
$$
where
\begin{eqnarray*}
R
& := &
\mathbb{T}_{m-1,s-1} \mathbb{T}_{m,r}\\
S_1
& := &
\mathbb{T}_{m-1,r} \mathbb{T}_{m,r+1} \mathbb{T}_{r-1,s}=\mathbb{T}_{m-1,s} \mathbb{T}_{m,r+1}  \\
S_2
& := &
\mathbb{T}_{m-1,r}  \mathbb{T}_{m,r+1} E_r\mathbb{T}_{r-1,s}=\mathbb{T}_{m-1,s}  \mathbb{T}_{m,r+1} E_{\{a,b\}}\\
S_3
& := &
\mathbb{T}_{m-1,r}  \mathbb{T}_{m,r+1} T_rE_r \mathbb{T}_{r-1,s}=\mathbb{T}_{m-1,r}  \mathbb{T}_{m,s}  E_{\{a,b\}}
\end{eqnarray*}
being $\{a,b\}= \theta^{-1}_{r-1,s}(\{r,r+1 \})=\{s,r+1\}$.
Now, by using again (\ref{tTT}) and later (\ref{tTT1}), we get:
$$
XY =
\left\{\begin{array}{ll}
\mathbb{A} R \mathbb{B}^{\prime}&\qquad \text{for} \quad 0<r\leq s\\
\mathbb{A}S_1 \mathbb{B}^{\prime}+ (u-1) \mathbb{A}S_2 \mathbb{B}^{\prime}+ (u-1)\mathbb{A}S_3 \mathbb{B}^{\prime}& \qquad \text{for}\quad  s<r
\end{array}\right.
$$
Then
$$
\varrho_n(XY) =
\left\{\begin{array}{ll}
  \A  \mathbb{A} R \mathbb{B} & \qquad \text{for} \quad 0<r\leq s  \\
\varrho_n(\mathbb{A}S_1 \mathbb{B}^{\prime})+ (u-1)\varrho_n(\mathbb{A}S_2 \mathbb{B}^{\prime})+ (u-1)\varrho_n(\mathbb{A}S_3 \mathbb{B}^{\prime})& \qquad \text{for}\quad  s<r
\end{array}\right.
$$
Clearly  $\A \mathbb{A} S_1\mathbb{B} = \varrho_n(\mathbb{A}S_1 \mathbb{B}^{\prime})$.   Now,
using (\ref{TE_I}), we obtain
\begin{eqnarray*}
\mathbb{A}S_2 \mathbb{B}^{\prime}
& = &
 \mathbb{A} (\mathbb{T}_{m-1,s} \mathbb{T}_{m,r+1} )  \mathbb{T}_{m+1,k_{m+1}}\cdots \mathbb{T}_{n-2,k_{n-2}}\mathbb{T}_{n-1,k_{n-1}}E_{\{a',b'\}}E_I
 \end{eqnarray*}
 where $\{a',b'\} := \theta_{n-1,k_{n-1}}^{-1}\cdots\theta_{m+1, k_{m+1}}^{-1}  (\{a,b\})$.

  Now,  we  have,
$$
\varrho_n(\mathbb{A}S_2 \mathbb{B}^{\prime})
 =
\mathsf{A}\mathbb{A} ( \mathbb{T}_{m-1,s} \mathbb{T}_{m,r+1} )  \mathbb{T}_{m+1,k_{m+1}}\cdots \mathbb{T}_{n-2,k_{n-2}}\check{\mathbb{T}}_{n-1,k_{n-1}} E_{\tau_{n,k_{n-1}}(I\ast\{a'b'\})},
$$
that is  equal  to  $\mathsf{A}\mathbb{A}S_2 \mathbb{B}$  if
$$ E_{\{a,b\}}\mathbb{B}=\mathbb{T}_{m+1,k_{m+1}}\cdots \mathbb{T}_{n-2,k_{n-2}}\check{\mathbb{T}}_{n-1,k_{n-1}} E_{\tau_{n,k_{n-1}}(I\ast\{a'b'\})}.$$
But
$$E_{\{a,b\}}\mathbb{B}=\mathbb{B}E_{\{a'',b''\} }= \mathbb{T}_{m+1,k_{m+1}}\cdots \mathbb{T}_{n-2,k_{n-2}}\check{\mathbb{T}}_{n-1,k_{n-1}}E_{(\tau_{n,k_{n-1}}(I))\ast \{a'',b''\}} $$
where $\{a'',b''\}=\theta_{n-2,k_{n-1}}^{-1}\cdots\theta_{m+1, k_{m+1}}^{-1} (\{a,b\})$.

 Therefore, we  have  to  check  that
\begin{equation}\label{abab}(\tau_{n,k_{n-1}}(I))\ast \{a'',b''\} =  \tau_{n,k_{n-1}}(I \ast \{a',b'\}). \end{equation}

 Remember  that $r< n-2$  and $s<r$,  so  $b<(n-1)$.  Thus, the  pair $\{a',b'\}$  may contain  $n$,  while the  pair  $\{a'',b''\}$  cannot.

 Observe also  that
 $$\{a'',b''\}=\theta_{n-2,k_{n-1}}^{-1} \theta_{n-1,k_{n-1}} \{a',b'\}  $$
 and $\theta_{n-2,k_{n-1}}^{-1} \theta_{n-1,k_{n-1}}  $  is  the  transposition $(n,k_{n-1})$.

Thus  we  have  to check  (\ref{abab}) in  two cases:

a) if  $\{a',b'\}$  does not  contain neither $k_{n-1}$ nor $n$,  then  $\{a'',b''\}=\{a',b'\}$  and  (\ref{abab})  reads
 $$((I\ast \{n,k_{n-1}\})\backslash n)\ast \{a',b'\}= ((I \ast \{a',b'\})\ast \{n,k_{n-1}\})\backslash n  $$
 which  is  evidently  satisfied.

b) If  $\{a',b'\}=\{a',n\}$,   then  $\{a'',b''\}=\{a',k_{n-1}\}$  and  (\ref{abab})  reads
 $$((I\ast \{n,k_{n-1}\})\backslash n)\ast \{a',k_{n-1}\}= ((I \ast \{a',n\})\ast \{n,k_{n-1}\})\backslash n.  $$
Since $a'<n$, both  terms  are  equal  to
$(I\ast \{a',n,k_{n-1}\})\backslash n$ and (\ref{abab}) is  satisfied.

In  a similar way  we check that $\varrho_n(\mathbb{A}S_3 \mathbb{B}^{\prime}) =
\mathsf{A}\mathbb{A}S_3 \mathbb{B}$. Therefore, $X\varrho_n(Y) = \varrho_n(XY)$ whenever
$r\not =0$.

\smallbreak

{\bf Case $r=0$}. We have that (\ref{trhoY}) becomes $\mathsf{A}\mathbb{A}T_m\mathbb{T}_{m,s}\mathbb{B} = \mathsf{A}\mathbb{A}T_m^2\mathbb{T}_{m-1,s}\mathbb{B}$. So,
$$
T_m\varrho_n(Y) = \mathsf{A}\mathbb{A}\mathbb{T}_{m-1,s}\mathbb{B} +
(u-1)\mathsf{A}\mathbb{A}E_m\mathbb{T}_{m-1,s}\mathbb{B} +
(u-1)\mathsf{A}\mathbb{A}E_m\mathbb{T}_{m,s}\mathbb{B}.
$$
Now, (\ref{tY}) becomes $\mathbb{A}(T_m \mathbb{T}_{m,s})	\mathbb{B}^{\prime} = \mathbb{A}(T_m^2 \mathbb{T}_{m-1,s})	\mathbb{B}^{\prime}$. Then
$$
T_m Y =
\mathbb{A}\mathbb{T}_{m-1,s}	\mathbb{B}^{\prime}
+ (u-1)\mathbb{A}(E_m\mathbb{T}_{m-1,s})	\mathbb{B}^{\prime}
+ (u-1)\mathbb{A}(E_m \mathbb{T}_{m,s})	\mathbb{B}^{\prime}.
$$
The  equality $\varrho_n(T_m Y)= T_m\varrho_n(Y) $ is  thus  obtained as  in  the  previous  case  comparing  the  three  terms in both  members of  the equality.


 \smallbreak
Finally we   check the case (ii) when $X= E_m$,  with $1\le m \le n-2$.  Let
$$
Y= \mathbb{T}_{1,k_1}  \mathbb{T}_{2,k_2}\cdots \mathbb{T}_{n-1,k_{n-1}}E_I.
$$

First  case: $k_{n-1} =0$.  Since  $Y\in \E_n$,  $I\backslash n \not=I $.
 We  have    $$ XY  = \mathbb{T}_{1,k_1}  \mathbb{T}_{2,k_2}\cdots \mathbb{T}_{n-2,k_{n-2}}E_{I}E_{ \{a,b\}} $$
 where  $\{a,b\}=  \theta_{n-2,k_{n-2}}^{-1} \cdots \theta_{2,k_2}^{-1} \theta_{1,k_1}^{-1}(\{m,m+1\})$.
 Moreover $E_{I}E_{ \{a,b\}}=E_{I \ast \{a,b\}} $.

Therefore,    $$\varrho_n( XY)= \B\mathbb{T}_{1,k_1}  \mathbb{T}_{2,k_2}\cdots  \mathbb{T}_{n-2,k_{n-2}}E_{(I\ast \{a,b\} )\backslash n}. $$
On  the  other  hand,  we  have
\begin{eqnarray*} X\varrho_n(Y) &= &E_m \B \mathbb{T}_{1,k_1}  \mathbb{T}_{2,k_2}\cdots \check{\mathbb{T}}_{n-2,k_{n-2}}E_{I\backslash n} \\
&= & \B \mathbb{T}_{1,k_1}  \mathbb{T}_{2,k_2}\cdots \check{\mathbb{T}}_{n-2,k_{n-1}}E_{(I\backslash n)\ast \{a,b\} }.
\end{eqnarray*}
Since  $m\le n-2$,  $a$ and  $b$ cannot  be  higher  than $n-1$,  therefore $(I\ast \{a,b\})\backslash n =(I\backslash n)\ast \{a,b\}$,  so that we  get $\varrho_n(XY)= X\varrho_n(Y)$.

 Second  case:  $k_{n-1}\not=0$.
 We  have    $$ XY  = \mathbb{T}_{1,k_1}  \mathbb{T}_{2,k_2}\cdots \mathbb{T}_{n-1,k_{n-1}}E_{I\ast \{a,b\}}$$
 where  $\{a,b\}=  \theta_{n-1,k_{n-1}}^{-1} \cdots\theta_{2,k_2}^{-1} \theta_{1,k_1}^{-1}(\{m,m+1\})$
and $ E_{I\ast \{a,b\}}=E_I E_{\{a,b\}} $.

Therefore,    $$\varrho_n( XY)= \A \mathbb{T}_{1,k_1}  \mathbb{T}_{2,k_2}\cdots \check{\mathbb{T}}_{n-1,k_{n-1}}E_{\tau_{n,k_{n-1}}(I\ast \{a,b\} )}.$$

 On  the  other  hand,  we  have
\begin{eqnarray*} X\varrho_n(Y) &= &E_m \A \mathbb{T}_{1,k_1}  \mathbb{T}_{2,k_2}\cdots \check{\mathbb{T}}_{n-1,k_{n-1}}E_{\tau_{n,k_{n-1}}(I)} \\
&= & \A \mathbb{T}_{1,k_1}  \mathbb{T}_{2,k_2}\cdots \check{\mathbb{T}}_{n-1,k_{n-1}}E_{\tau_{n,k_{n-1}}(I)\ast \{c,d\}}
\end{eqnarray*}
where  $\{c,d\}= \theta_{n-2,k_{n-1}}^{-1} \cdots \theta_{2,k_2} \theta_{1,k_1} (\{m,m+1\})$. Now,    $\varrho_n(XY)=X\varrho_n(Y)$    if the  two  partitions
 $ \tau_{n,k_{n-1}}(I\ast \{a,b\})$  and  $\tau_{n,k_{n-1}}(I)\ast \{c,d\}$  are  equal, i.e.,  if
\begin{equation}\label{par2}( (I\ast\{a,b\} )\ast \{ k_{n-1},n\} ) \backslash n= ( (I\ast \{ k_{n-1},n\} ) \backslash n)   \ast \{c,d\}.\end{equation}
 Observe that
$$
\{c,d\}=\theta_{n-2,k_{n-1}}^{-1}(\theta_{n-1,k_{n-1}}\{a,b\}),
$$
i.e., $\{c,d\}$ is  obtained  applying to $\{a,b\}$  the  transposition $(k_{n-1},n)$.
Observe  that,  since $m\le n-2$,   $\{a,b\}$ may  contain $n$,  while $\{c,d\}$ cannot.  Therefore   equation (\ref{par2}) is  proved  exactly  as equation (\ref{abab}). Thus,  the  proof of (ii)  is  finished.

\smallbreak

\end{proof}


\begin{lemma}\label{relativepropertie3}
For all $X\in {\mathcal E}_{n}$, we have:

\begin{enumerate}
\item [(i)]$\varrho_{n-1}(\varrho_n(E_{n-1}X)) =
\varrho_{n-1}(\varrho_n(XE_{n-1}))$
 \item [(ii)]$\varrho_{n-1}(\varrho_n(T_{n-1}X)) =
\varrho_{n-1}(\varrho_n(XT_{n-1}))$
\item [(iii)]$\varrho_{n-1}(\varrho_n(T_{n-1}E_{n-1}X)) =
\varrho_{n-1}(\varrho_n(XT_{n-1}E_{n-1}))$.

 \end{enumerate}
 \end{lemma}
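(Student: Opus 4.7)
My plan is to combine Corollary \ref{atmost} with the bilinearity just established in Lemma \ref{relativepropertie1} to reduce each of (i), (ii), (iii) to a finite list of core cases, which are then verified by direct computation using the defining relations of $\mathcal{E}_n$ and Definition \ref{rho}.

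By Corollary \ref{atmost} and linearity it suffices to treat $X$ of the form $UHV$ with $U,V\in\mathcal{E}_{n-1}$ and $H\in\{1, T_{n-1}, E_{n-1}, T_{n-1}E_{n-1}\}$. Writing $G$ for any of $T_{n-1}$, $E_{n-1}$, $T_{n-1}E_{n-1}$, the bilinearity (\ref{RT1}) yields $\varrho_n(GX)=\varrho_n(GUH)V$ and $\varrho_n(XG)=U\varrho_n(HVG)$, so the identity to check becomes
\[
\varrho_{n-1}\bigl(\varrho_n(GUH)\,V\bigr) \;=\; \varrho_{n-1}\bigl(U\,\varrho_n(HVG)\bigr).
\]
When $H=1$, i.e.\ $X=UV\in\mathcal{E}_{n-1}$, this follows at once from (\ref{RT4}), (\ref{RT5}) and the relation $E_{n-1}T_{n-1}=T_{n-1}E_{n-1}$ from (\ref{E6}): each of the three identities reduces to $\lambda\,\varrho_{n-1}(UV)=\lambda\,\varrho_{n-1}(UV)$ with $\lambda$ an explicit scalar in $\mathsf{A},\mathsf{B}$ and $u$.

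For the three nontrivial values of $H$, I would apply Corollary \ref{atmost} a second time, now inside $\mathcal{E}_{n-1}$, and write $U=U_1 H_U U_2$, $V=V_1 H_V V_2$ with $U_i,V_i\in\mathcal{E}_{n-2}$ and $H_U,H_V\in\{1,T_{n-2},E_{n-2},T_{n-2}E_{n-2}\}$. Since every element of $\mathcal{E}_{n-2}$ commutes with both $T_{n-1}$ and $E_{n-1}$ by (\ref{E1}), (\ref{E4}), (\ref{E7}), the factors $U_i,V_i$ can be transported through the outer $G$ and extracted from $\varrho_n$ by bilinearity; once they meet elements of $\mathcal{E}_{n-2}$ they can similarly be pulled out of $\varrho_{n-1}$. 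This reduces the problem to finitely many identities in which $U,V$ are each one of $\{1,T_{n-2},E_{n-2},T_{n-2}E_{n-2}\}$ and $G,H\in\{T_{n-1},E_{n-1},T_{n-1}E_{n-1}\}$, each of which can be checked by expanding the word via (\ref{E1})--(\ref{E9}) into the Ryom--Hansen canonical form and then applying Definition \ref{rho} and the formulas (\ref{RT4}), (\ref{RT5}).

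The main obstacle will be the combinatorial set-partition bookkeeping inherited from $\varrho_n$: the map $\tau_{n,k}$, which records how a block containing $n$ is broken off, interacts nontrivially with its analogue $\tau_{n-1,\ell}$ used by $\varrho_{n-1}$, and one must verify that both orderings yield the same set-partition on $\{1,\ldots,n-2\}$. This is the same phenomenon that made the identity (\ref{abab}) delicate in the proof of Lemma \ref{relativepropertie1}. I expect the subcases of (i) and (iii) in which an $E_{n-2}$ appears in $H_U$ or $H_V$ to rest on partition manipulations of this flavor, together with the cycling relation (\ref{E9}) for moving $E_{n-1}$ past $T_{n-2}$ and the absorption relation (\ref{E8}) for combining $E_{n-1}$ with $E_{n-2}$.
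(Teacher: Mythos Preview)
Your reduction has a genuine gap at the second application of Corollary \ref{atmost}. After writing $U=U_1H_UU_2$ and $V=V_1H_VV_2$ with $U_i,V_i\in\mathcal{E}_{n-2}$, it is true that $U_1$ and $V_2$ commute with $G,H$ and can be pushed to the outside and extracted from both $\varrho_n$ and $\varrho_{n-1}$. But the \emph{inner} factors $U_2$ and $V_1$ do not disappear. Concretely, from $GUH = U_1\,G H_U H\,U_2$ one obtains
\[
\varrho_n(GUH)=U_1\,\varrho_n(GH_UH)\,U_2,
\]
and the left-hand side of your displayed identity becomes
\[
U_1\;\varrho_{n-1}\!\bigl(\varrho_n(GH_UH)\,(U_2V_1)\,H_V\bigr)\;V_2,
\]
with a symmetric expression on the right. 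The element $\varrho_n(GH_UH)$ is a linear combination of $1,T_{n-2},E_{n-2},T_{n-2}E_{n-2}$, and $H_V$ lies in the same set; the arbitrary element $W:=U_2V_1\in\mathcal{E}_{n-2}$ is therefore sandwiched between words in $T_{n-2},E_{n-2}$, with which it does \emph{not} commute. Bilinearity of $\varrho_{n-1}$ over $\mathcal{E}_{n-2}$ lets you pull elements of $\mathcal{E}_{n-2}$ only from the extreme left or right, not out of the middle, so $W$ cannot be removed. Hence the problem is not reduced to the finite list you describe: an arbitrary $W\in\mathcal{E}_{n-2}$ remains.

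The paper avoids this precisely by working with basis elements $X=\mathbb{T}_{1,k_1}\cdots\mathbb{T}_{n-1,k_{n-1}}E_J$. That factorization is one-sided: the whole block $\mathbb{T}_{1,k_1}\cdots\mathbb{T}_{n-3,k_{n-3}}\in\mathcal{E}_{n-2}$ sits on the far left and can be pulled out of $\varrho_{n-1}\circ\varrho_n$ in one piece, leaving only $\mathbb{T}_{n-2,k_{n-2}}\mathbb{T}_{n-1,k_{n-1}}E_J$ to analyze. What remains is still not finite (the set-partition $J$ is arbitrary), and the substance of the proof is the explicit tracking of how $J$ transforms under $\tau_{n,k_{n-1}}$ and $\tau_{n-1,k_{n-2}}$ in the various cases $k_{n-1}=0$ or $\neq 0$, $k_{n-2}=0$ or $\neq 0$. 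Your final paragraph correctly identifies this partition bookkeeping as the heart of the matter, but your proposed reduction does not actually arrive at a setting where it can be carried out.
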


\smallbreak

\begin{proof}
 Without loss of  generality, we can suppose $X\in \mathcal{B}_n$. Set
 $$
X=\mathbb{T}_{1,k_1}  \mathbb{T}_{2,k_2}\cdots\mathbb{T}_{n-2,k_{n-2}}\mathbb{T}_{n-1,k_{n-1}}E_J.
 $$

We  prove first the claim (i). Invoking   Lemma \ref{relativepropertie1}, we get
\begin{eqnarray*}
\varrho_{n-1}(\varrho_n(E_{n-1}X))
& = &
\mathbb{T}_{1,k_1}  \mathbb{T}_{2,k_2}\cdots \mathbb{T}_{n-3,k_{n-3}}\varrho_{n-1}(\varrho_n(E_{n-1}\mathbb{T}_{n-2,k_{n-2}}\mathbb{T}_{n-1,k_{n-1}}E_J))
 \\
\varrho_{n-1}(\varrho_n(XE_{n-1}))
& = &
\mathbb{T}_{1,k_1}  \mathbb{T}_{2,k_2}\cdots \mathbb{T}_{n-3,k_{n-3}}\varrho_{n-1}(\varrho_n(\mathbb{T}_{n-2,k_{n-2}}\mathbb{T}_{n-1,k_{n-1}}E_{n-1}E_J))
\end{eqnarray*}
Thus, it is enough to prove that $E=F$, where
\begin{eqnarray*}
E & := & \varrho_{n-1}(\varrho_n(E_{n-1}\mathbb{T}_{n-2,k_{n-2}}\mathbb{T}_{n-1,k_{n-1}}E_J))\\
F & :=& \varrho_{n-1}(\varrho_n(\mathbb{T}_{n-2,k_{n-2}}\mathbb{T}_{n-1,k_{n-1}}E_{n-1}E_J)).
\end{eqnarray*}
To do that, we consider four cases,  distinguishing if  $k_{n-1}$ and  $k_{n-2}$ vanish or not.  In the case $k_{n-1}=k_{n-2}=0$ it is  evident that $E=F$.

\noindent{\bf Case $k_{n-1}=0$ and $k_{n-2}\not=0$.} We have
$$
F= \varrho_{n-1}(\varrho_n(\mathbb{T}_{n-2,k_{n-2}}E_{n-1}E_J))=\mathsf{B}
\varrho_{n-1}(\mathbb{T}_{n-2,k_{n-2}}E_{(J\ast (n-1))\backslash n})).
$$

On the other part
\begin{eqnarray*}
E=\varrho_{n-1} (\varrho_n(E_{n-1}\mathbb{T}_{n-2,k_{n-2}}E_J))
& = &
\varrho_{n-1} (\varrho_n(\mathbb{T}_{n-2,k_{n-2}}E_{\theta_{n-2, k_{n-2}}^{-1}(\{n-1,n\})}E_J))\\
& = &
\varrho_{n-1} (\mathbb{T}_{n-2,k_{n-2}}\varrho_n(E_{\theta_{n-2, k_{n-2}}^{-1}(\{n-1,n\})}E_J)).
\end{eqnarray*}
Now, we have  $\theta_{n-2, k_{n-2}}^{-1}(\{n-1,n\})= \{k_{n-2} , n\}$. So,  we  get
$$\varrho_n(E_{\{k_{n-2} , n\} }E_J) = \mathsf{B} E_{(J\ast \{k_{n-2} , n\}) \backslash n}.$$

In  the  case  in  which $n\notin \Supp(J)$,  evidently:
$$(J\ast \{n-1 , n\}) \backslash n=  (J\ast \{k_{n-2} , n\}) \backslash n=J  $$
so  that $E=F$.
In  the  case    in  which  $J$   contains  a  set $\{a, \dots, n\}$, i.e. $J=(\check J, \{a, \dots, n\} )$,
$$(J\ast \{n-1 , n\}) \backslash n= \{\check J  \ast \{a, \dots, n-1 \} \}=:J_1   $$
$$  (J\ast \{k_{n-2} , n\}) \backslash n=\{ \check J \ast  \{a, \dots, k_{n-2}\} \} =: J_2.$$

Now:
$$  F = B(\varrho_{n-1}(\mathbb{T}_{n-2,k_{n-2}}E_{J_1}) $$ and
$$  E= B(\varrho_{n-1}(\mathbb{T}_{n-2,k_{n-2}}E_{J_2}  ).$$
We  have, for $i=1,2$. $$\varrho_{n-1}(\mathbb{T}_{n-2,k_{n-2}}E_{J_i})=A \check{ \mathbb{T}}_{n-2,k_{n-2}} E_{ J'_i } $$
where $ J'_i =  \tau_{n-1,k_{n-2}}(J_i)= (J_i \ast \{n-1,k_{n-2}\}) \backslash (n-1).$
Evidently $J'_1=J'_2$, since  $J_1$  and  $J_2$ are  identical  up  to the  transposition of  $(n-1)$ with $k_{n-1}$. Therefore  $F=E$.

\smallbreak

\noindent{\bf Case $k_{n-1}\not=0$ and $k_{n-2}=0$.} We have
$$
F= \varrho_{n-1}(\varrho_n(\mathbb{T}_{n-1,k_{n-1}}E_{n-1}E_J))
\quad \text{and} \quad
E=\varrho_{n-1} (\varrho_n(E_{n-1}\mathbb{T}_{n-1,k_{n-1}}E_J)).
$$
But, $E_{n-1}\mathbb{T}_{n-1,k_{n-1}}E_J= \mathbb{T}_{n-1,k_{n-1}}E_{\{k_{n-1}, n\}}E_J $, since  $\theta^{-1}_{n-1, k_{n-1}}(\{n-1,n\})=\{k_{n-1},n\}$. Then
$$
 E=\varrho_{n-1} (\varrho_n(\mathbb{T}_{n-1,k_{n-1}}E_{J \ast \{k_{n-1}, n\}})) =\varrho_{n-1} (\A \, \check{\mathbb{T}}_{n-1,k_{n-1}}E_{J_1})$$
 and
 $$ F=\varrho_{n-1} (\varrho_n(\mathbb{T}_{n-1,k_{n-1}}E_{J \ast \{(n-1), n\}})) =\varrho_{n-1} (\A \, \check{\mathbb{T}}_{n-1,k_{n-1}}E_{J_2}) $$
where $J_1= ( J \ast \{k_{n-1}, n\} )\backslash n $  and $J_2=(  (J \ast \{(n-1), n\})\ast \{k_{n-1}, n\})\backslash n  $.

Thus  $E$ and $F$  can be  written as  follows (for $i=1$ and  $i=2$ respectively)
$$   \A \, \varrho_{n-1} ( \check{\mathbb{T}}_{n-1,k_{n-1}}E_{J_i})=\A^2 \, \check {\check{\mathbb{T}}}_{n-1,k_{n-1}}E_{J'_i} $$
 where  $ J'_i =   (J_i \ast \{k_{n-1},(n-1)\})\backslash (n-1)$.

 Hence the equality $E=F$  follows,  as  in  the  preceding  case,  from  the  fact  that
$J'_1=J'_2 $.

\smallbreak

\noindent{\bf Case $k_{n-1}\not=0$ and $k_{n-2}\not=0$.}
From Lemma \ref{relativepropertie1}, we get
$F = \varrho_{n-1}(\mathbb{T}_{n-2,k_{n-2}}\varrho_n(\mathbb{T}_{n-1,k_{n-1}}E_{n-1}E_J))$. Then
$$
F= \mathsf{A}\, \varrho_{n-1}(\mathbb{T}_{n-2,k_{n-2}}\check{\mathbb{T}}_{n-1,k_{n-1}}
E_{J_1})
$$
where
$J_1=\tau_{n,k_{n-1}}(J\ast (n-1))=  ( J\ast \{k_{n-1},(n-1),n\}) )\backslash n $.

On the other side, $E_{n-1}\mathbb{T}_{n-2,k_{n-2}}\mathbb{T}_{n-1,k_{n-1}}E_J=
\mathbb{T}_{n-2,k_{n-2}}E_{\{k_{n-2}, n\}}\mathbb{T}_{n-1,k_{n-1}} E_J$.

Call $\{a,b\}=\theta^{-1}_{n-1,k_{n-1}}(\{k_{n-2},n\})$.  Observe  that $\{a,b\}=\{k_{n-2},k_{n-1}\}$  if  $k_{n-2}<k_{n-1}$,  whereas  $\{a,b\}=\{k_{n-1},k_{n-2}+1\}$  if  $k_{n-2}\ge k_{n-1}$.

Using  Lemma \ref{relativepropertie1}, we obtain
 \begin{eqnarray*}
 E & = & \varrho_{n-1}(\mathbb{T}_{n-2,k_{n-2}}\varrho_n(E_{\{k_{n-2}, n\}}\mathbb{T}_{n-1,k_{n-1}} E_J)) \\
  & = &    \varrho_{n-1}(\mathbb{T}_{n-2,k_{n-2}}  \varrho_n({\mathbb{T}}_{n-1,k_{n-1}} E_{\{  a,b \}} E_J))  \\
  & = &  \A \, \varrho_{n-1}(\mathbb{T}_{n-2,k_{n-2}}  \check{\mathbb{T}}_{n-1,k_{n-1}} E_{J_2})
 \end{eqnarray*}
being
$J_2=  \tau_{n,k_{n-1}} (J\ast \{a,b\})= (J \ast \{a,  k_{n-1}, n \})\backslash n$, where
 $a=k_{n-2}$  if $k_{n-2}< k_{n-1}$ and  $a=k_{n-2}+1$ otherwise.

 Now,  $J_1\not=J_2$,  so   we  have  to compare $ \varrho_{n-1}(\mathbb{T}_{n-2,k_{n-2}} \mathbb{T}_{n-2,k_{n-1}} E_{J_i}) $, for $i=1,2$. To  calculate   $\varrho_{n-1}$, it is  convenient to  write $\mathbb{T}_{n-2,k_{n-2}}\mathbb{T}_{n-2,k_{n-1}}$  as
  $$\mathbb{T}_{n-2,k_{n-2}}\mathbb{T}_{n-2,k_{n-1}}=T_{n-2}T_{n-3}T_{n-2} \check{\check{\mathbb{T}}}_{n-2,k_{n-2}} \check{\mathbb{T}}_{n-2,k_{n-1}}.$$
Then, using the relation $ T_{n-2}T_{n-3}T_{n-2}=T_{n-3}T_{n-2}T_{n-3}$, and  Lemma \ref{relativepropertie1}, we  get
  $$ \varrho_{n-1}(\mathbb{T}_{n-2,k_{n-2}} \mathbb{T}_{n-2,k_{n-1}} E_{J_i})=
 T_{n-3}\ \varrho_{n-1}(T_{n-2}E_{J'_i}) \  T_{n-3} \check{\check{\mathbb{T}}}_{n-2,k_{n-2}} \check{\mathbb{T}}_{n-2,k_{n-1}}   $$
where  $J'_i= \Theta (J'_i)$,  being  $\Theta=\theta_{n-3,k_{n-2}}\theta_{n-3,k_{n-1}}$.
Let $\{m,\dots,n\}$ be the  set  of  the  partition $J$  containing  $n$, and  denote $\check J$ the  set--partition  obtained  from  $J$ by removing the set $\{m,\dots,n\}$. Then,
$$J_1= (J\ast \{ k_{n-1},n-1, n\} )\backslash n= \check{J}\ast \{m,\dots,k_{n-1},n-1  \}  $$
$$ J_2= (J \ast \{a,k_{n-1}, n \})\backslash n=\check{J}\ast \{m,\dots,a, k_{n-1} \}. $$

   We  can  write therefore
   $$\Theta(J_1)= \Theta(\check J)\ast \Theta( \{ m,\dots, k_{n-1},n-1\} ) $$
   $$\Theta(J_2)= \Theta(\check J)\ast \Theta( \{ m,\dots, a, k_{n-1}  \}). $$
 Now, we  obtain  $\Theta(k_{n-1})=n-3$, and  $\Theta(a)=n-2$ in  both  cases.
 Therefore,  since  $\Theta$  does  not  touch  $n-1$,
   $$\Theta(J_1)= \Theta(\check J)\ast  \{  \Theta(m),\dots, n-3,n-1\} $$
   $$\Theta(J_2)= \Theta(\check J)\ast  \{ \Theta(m),\dots,  n-3,n-2. \}.$$
Now,  we  have
$$\varrho_{n-1}(T_{n-2}E_{J'_i})=\A \, E_{\tau_{n-1,n-2}(J'_i)} $$
where,  for  both  $i=1$  and  $i=2$,  we have
$$\tau_{n-1,n-2}(J'_i)=(\Theta(J_i)\ast\{(n-1),(n-2)\})\backslash (n-1)=(\Theta(\check J)\ast  \{  \Theta(m),\dots,  n-3,n-2,n-1\})\backslash (n-1).$$
Therefore,  $\varrho_{n-1}(T_{n-2}E_{J'_1})=\varrho_{n-1}(T_{n-2}E_{J'_2})$.


\smallbreak

 We will prove now (ii). Firstly, we   study    the  cases when $k_{n-1}=0$ or $k_{n-2}=0$. In the case $k_{n-1}=0$, we have:
 $$
T_{n-1}X =  \mathbb{T}_{1,k_1}  \mathbb{T}_{2,k_2}\cdots \mathbb{T}_{n-1,k_{n-2}}E_J\quad
\text{and}\quad XT_{n-1} = \mathbb{T}_{1,k_1}  \mathbb{T}_{2,k_2}\cdots \mathbb{T}_{n-2,k_{n-2}}T_{n-1}E_{s_{n-1}(J)}.
 $$
 Then,
 $$
 \varrho_n(T_{n-1}X) = \mathsf{A} \mathbb{T}_{1,k_1}  \mathbb{T}_{2,k_2}\cdots \check{\mathbb{T}}_{n-1,k_{n-2}}E_{\tau_{n-1,k_{n-2}}(J)}
 $$
 and
 $$
\varrho_n(XT_{n-1}) = \mathsf{A}\mathbb{T}_{1,k_1}  \mathbb{T}_{2,k_2}\cdots \mathbb{T}_{n-2,k_{n-2}}E_{\tau_{n,n-1}(J)}.
 $$
Now
  $$ \varrho_{n-1}(\varrho_n(T_{n-1}X))=\A^2 \, \mathbb{T}_{1,k_1}  \mathbb{T}_{2,k_2}\cdots \check{\mathbb{T}}_{n-2,k_{n-2}}E_{\tau_{n-1,k_{n-2}}(\tau_{n,k_{n-2}}(J))}$$
and
  $$ \varrho_{n-1}(\varrho_n(XT_{n-1})) =\A^2 \, \mathbb{T}_{1,k_1}  \mathbb{T}_{2,k_2}\cdots \check{\mathbb{T}}_{n-2,k_{n-2}}E_{\tau_{n-1,k_{n-2}}(\tau_{n-1,k_{n-2}}(J))}.$$
But
$$\tau_{n-1,k_{n-2}}(\tau_{n,k_{n-2}}(J))=((J \ast \{k_{n-2},n\})\backslash n)\ast   \{k_{n-2},n-1\}) \backslash (n-1)$$  and
$$\tau_{n-1,k_{n-2}}(\tau_{n, n-1}(J))=((J \ast \{ n-1,n\}) \backslash n)\ast \{k_{n-2},n-1\})   \backslash (n-1).$$
The right  members of  the  preceding  two  equalities  are both equal  to
$$ ((J \ast \{k_{n-2}, n-1,n\}) \backslash n)   \backslash (n-1)$$  so  that the  proof is  completed.

  For the case $k_{n-2}=0$, we have:
$$
T_{n-1}X =  \mathbb{T}_{1,k_1}  \mathbb{T}_{2,k_2}\cdots \mathbb{T}_{n-3,k_{n-3}}T_{n-1}\mathbb{T}_{n-1,k_{n-1}}E_J
$$
and
$$
 XT_{n-1} = \mathbb{T}_{1,k_1}  \mathbb{T}_{2,k_2}\cdots \mathbb{T}_{n-3,k_{n-3}}\mathbb{T}_{n-1,k_{n-1}}T_{n-1}
 E_{s_{n-1}(J)}.
 $$
 By using Lemma \ref{relativepropertie1} we  get  that
 $\varrho_{n-1}(\varrho_n(T_{n-1}X))$ and $\varrho_{n-1}(\varrho_n(XT_{n-1}))$  are different,
 respectively, in $$R:=\varrho_{n-1}(\varrho_n( T_{n-1}\mathbb{T}_{n-1,k_{n-1}}E_J))  \ \text{ and} \
 S:=\varrho_{n-1}(\varrho_n( \mathbb{T}_{n-1,k_{n-1}}T_{n-1} E_{s_{n-1}(J)}).$$  It is a routine to check that these two last expression are equal for $k_{n-1}=0,n-1$. Thus, we need to check only that $R=S$  for $0<k_{n-1}<n-1$. Now,
 \begin{eqnarray*}
 T_{n-1}\mathbb{T}_{n-1,k_{n-1}}E_J & =&  T_{n-1}^2T_{n-2}\mathbb{T}_{n-3,k_{n-1}}E_J\\
 &= &
 (T_{n-2} + (u-1)E_{n-1}T_{n-2} + (u-1)E_{n-1}T_{n-1}T_{n-2})\mathbb{T}_{n-3, k_{n-1}}E_J\\
 &= &
 (T_{n-2} + (u-1)E_{n-1}T_{n-2} + (u-1)E_{n-1}T_{n-1}T_{n-2})E_{\theta_{n-3, k_{n-1}}(J)}\mathbb{T}_{n-3, k_{n-1}}
 \end{eqnarray*}
Let's  us  call  $\Theta:=\theta_{n-3, k_{n-1}}$
Then, by using again Lemma \ref{relativepropertie1}, we obtain
$$
R = (R_1 + (u-1)R_2 +(u-1)R_3)\mathbb{T}_{n-3, k_{n-1}}
$$
where
\begin{eqnarray*}
R_1 &:= &
\varrho_{n-1}(\varrho_{n}(T_{n-2}E_{\Theta(J)}))\\
R_2 & := &
\varrho_{n-1}(\varrho_{n}(E_{n-1}T_{n-2}E_{\Theta(J)}))\\
R_3 & := &
\varrho_{n-1}(\varrho_{n}(E_{n-1}T_{n-1}T_{n-2}E_{\Theta(J)})).
\end{eqnarray*}
Now,  we  have
\begin{eqnarray*}
R_1 &= &
\B \varrho_{n-1}(  T_{n-2}E_{ \Theta(J)\backslash n})= \mathsf{AB}E_{J^R_1} \\
R_2 & = &
 \B \varrho_{n-1}( T_{n-2}E_{(\{n-2,n\}\ast \Theta(J))\backslash n})=\A \B E_{J^R_2}  \\
R_3 & = &
\A \varrho_{n-1}( T_{n-2}E_{\tau_{n,n-2}(\{n-2,n\} \ast \Theta(J))})= \A^2 E_{J^R_3}
 \end{eqnarray*}
where
\begin{eqnarray*}
J^R_1  &  = & ((\Theta(J)\backslash n )\ast\{n-2,n-1\})\backslash (n-1)  \\
  J^R_2 &  = &
    (((\{n-2,n\}\ast \Theta(J))\backslash n )\ast\{n-2,n-1\})\backslash (n-1) \\
  J^R_3 & = &\tau_{n-1,n-2}( \tau_{n,n-2}(\{n-2,n\} \ast \Theta(J))=
   J^R_2.
\end{eqnarray*}

 On the other part,
 \begin{eqnarray*}
 \mathbb{T}_{n-1,k_{n-1}}T_{n-1} E_{s_{n-1}(J)}
 & = &
 T_{n-1}T_{n-2}T_{n-1}\mathbb{T}_{n-3, k_{n-1}}E_{s_{n-1}(J)} \\
 & = &
  T_{n-2}T_{n-1}T_{n-2}E_{\Theta (s_{n-1}J)} \mathbb{T}_{n-3, k_{n-1}}.
 \end{eqnarray*}
 Then, again from Lemma \ref{relativepropertie1}, we  get
\begin{eqnarray*}
S & = &   \varrho_{n-1}(\varrho_{n}( T_{n-2}T_{n-1}T_{n-2}E_{\Theta(s_{n-1} J)} ))\mathbb{T}_{n-3, k_{n-1}}\\
 & = &
\mathsf{A}\varrho_{n-1}(T_{n-2}^2 E_{\tau_{n,n-2}(\Theta(s_{n-1} J)))} )\mathbb{T}_{n-3, k_{n-1}}\\
& =& (S_1 + (u-1)S_2 +(u-1) S_3)\mathbb{T}_{n-3, k_{n-1}}
\end{eqnarray*}
where
\begin{eqnarray*}
S_1 & := &
 \mathsf{A}\varrho_{n-1}( E_{J^S })= \A \B E_{J^S_1}\\
S_2 & := &
 \mathsf{A}\varrho_{n-1}(E_{n-2} E_{J^S}) = \A \B E_{J^S_2}\\
S_3 & := &
\mathsf{A}\varrho_{n-1}(T_{n-2}E_{n-2} E_{J^S}) =  \A^2 E_{J^S_3}
\end{eqnarray*}
being
\begin{eqnarray*}
J^S  & = &\tau_{n,n-2}(\Theta(s_{n-1} J ))= (\Theta(s_{n-1} J )\ast\{n-2,n\} ) \backslash n,\\
  J^S_1 & = &J^S\backslash (n-1)=((\Theta(s_{n-1} J )\ast\{n-2,n\} ) \backslash n)\backslash (n-1),\\
 J^S_2& = & (J^S \ast \{n-2,n-1\})\backslash(n-1)= (((\Theta(s_{n-1} J )\ast\{n-2,n\} )\backslash n) \ast\{n-2,n-1\})   \backslash (n-1),\\
 J^S_3& = &\tau_{n-1,n-2}(J^S \ast \{n-2,n-1\})=(  (  (\Theta(s_{n-1} J )\ast\{n-2,n\})\backslash n)  \ast\{n-2,n-1\})  \backslash (n-1)= J^S_2.
 \end{eqnarray*}

Now,  observe  that
$$(\Theta(s_{n-1} J )\ast\{n-2,n\} ) \backslash n=  ((\Theta(J)\backslash n )\ast\{n-2,n-1\}) $$
since  $\Theta$  does  not  touch  $n-1,n$.
 Thus, we have that for  $i=1,2,3$,  $J^R_i=J^S_i$,  and  therefore  also $R_i= S_i$.  The  proof  is  concluded.

In order to finish the proof of (ii), we have to check the claim in  the  cases $k_{n-1}$ and  $k_{n-2}$ both different from  $0$.  We will compute first
 $\varrho_{n-1}(\varrho_n(T_{n-1}X))$:
$$
 T_{n-1}X
 =
 \mathbb{T}_{1,k_1}\cdots  \mathbb{T}_{n-3,k_{n-3}}(
T_{n-1}\mathbb{T}_{n-2,k_{n-3}} \mathbb{T}_{n-1,k_{n-3}}E_{J}).
$$
 Then, from Lemma \ref{relativepropertie1}:
$$
\varrho_{n-1}(\varrho_n(T_{n-1}X)) =
\mathbb{T}_{1,k_1}\cdots  \mathbb{T}_{n-3,k_{n-3}}G
$$
where $G:= \varrho_{n-1}(\varrho_{n}(T_{n-1}\mathbb{T}_{n-2,k_{n-2}} \mathbb{T}_{n-1,k_{n-2}}E_J))$.

We   compute now  $\varrho_{n-1}(\varrho_n(XT_{n-1}))$. From (\ref{TE_I}) we have
$$
XT_{n-1} =  \mathbb{T}_{1,k_1}  \cdots \mathbb{T}_{n-1,k_{n-1}}T_{n-1} E_{s_{n-1}(J)}.
$$
  Lemma \ref{relativepropertie1}  implies that
$$
\varrho_{n-1}(\varrho_{n}(XT_{n-1})) =  \mathbb{T}_{1,k_1}  \cdots \mathbb{T}_{n-3,k_{n-3}}H
$$
where $H:= \varrho_{n-1}(\varrho_{n}( \mathbb{T}_{n-2,k_{n-2}}\mathbb{T}_{n-1,k_{n-1}}T_{n-1} E_{s_{n-1}(J)}))$. Thus, it is enough to prove that $G=H$. We will do this by
distinguishing   four cases,  according  to  the  values of  $k_{n-1}$ and $k_{n-2}$.

  Case $k_{n-1}= n-1$ and $k_{n-2}= n-2$. In this case we have:
$ \mathbb{T}_{n-2,k_{n-2}}=T_{n-2}$ and  $\mathbb{T}_{n-1,k_{n-1}} = T_{n-1}$. We have then
$$
G= \varrho_{n-1}(\varrho_{n}(T_{n-1}T_{n-2} T_{n-1}E_J)) \quad \text{and} \quad
H=\varrho_{n-1}(\varrho_{n}( T_{n-2}T_{n-1}T_{n-1} E_{s_{n-1}(J)})).
$$
  $G$ can be  rewritten  as
\begin{eqnarray*}
G
& = &
\varrho_{n-1}(\varrho_{n}(T_{n-2}T_{n-1} T_{n-2}E_J))\\
& = &
\mathsf{A}\varrho_{n-1}(T_{n-2}^2 E_{J'})\\
& = &
\mathsf{A}\varrho_{n-1}((1 + (u-1)E_{n-2} + (u-1)E_{n-2}T_{n-2}) E_{J'})\\
& = &
G_1 + (u-1)G_2 +(u-1) G_3
\end{eqnarray*}
where $J'=(J \ast \{n,n-2\})\backslash n$  and
\begin{eqnarray*}
G_1 & := &
\mathsf{A}\varrho_{n-1}(E_{J'}) = \mathsf{AB}E_{J'\backslash (n-1)}\\
G_2 & := &
\mathsf{A}\varrho_{n-1}(E_{n-2}E_J)= \mathsf{AB}E_{J'\ast\{n-2, n-1\}\backslash (n-1)}\\
G_3 & := &
\mathsf{A}\varrho_{n-1}(T_{n-2}E_{n-2} E_J)=\mathsf{A^2}E_{J'\ast\{n-2, n-1\}\backslash (n-1)}.
\end{eqnarray*}
In order to compute  $H$, we   firstly note  that
$$
T_{n-2}T_{n-1}T_{n-1} E_{s_{n-1}(J)} = T_{n-2}( 1 + (u-1) E_{n-1} + (u-1)T_{n-1}E_{n-1})E_{s_{n-1}(J)}.
$$
Then,
$$
H = H_1 + (u-1) H_2 + (u-1)H_3
$$
where
\begin{eqnarray*}
H_1
& := &
\varrho_{n-1}(\varrho_n(T_{n-2}E_{s_{n-1}(J)}))= \mathsf{B}\varrho_{n-1}(T_{n-2}E_{s_{n-1}(J)\backslash n }) = \mathsf{AB}E_{s_{n-1}(J)\backslash n },\\
H_2
 & := &
\varrho_{n-1}(\varrho_n( T_{n-2}E_{n-1}E_{s_{n-1}(J)}))= \mathsf{B}\varrho_{n-1}( T_{n-2}E_{s_{n-1}(J)\ast\{n-1, n\}\backslash n})\\
& = &
 \mathsf{AB}E_{(((J \ast\{n-1, n\})\backslash n)\ast\{n-2, n-1\})\backslash (n-1)  }, \\
H_3
& := &
\varrho_{n-1}(\varrho_n(  T_{n-2}T_{n-1}E_{n-1}E_{s_{n-1}(J)})) =
\mathsf{A}\varrho_{n-1}(T_{n-2}E_{ (J\ast\{n-1, n\}) \backslash n})\\
& = &
\mathsf{A}^2E_{((( J \ast\{n-1, n\})\backslash n)\ast\{n-2,n-1\})\backslash (n-1).}
\end{eqnarray*}
Thus the  equality $G=H$ is a consequence of the  equalities $G_i=H_i$,  $i=1,2,3$.

We  will  analyze now  the  remaining  cases  $0<k_{n-1}< n-1$ and $0<k_{n-2}< n-2$.

Observe that
\begin{eqnarray*}
T_{n-1}\mathbb{T}_{n-2,k_{n-2}} \mathbb{T}_{n-1,k_{n-1}}
& = &
T_{n-1}(T_{n-2}T_{n-1}T_{n-3}\cdots T_{k_{n-2}} )\check{\mathbb{T}}_{n-1,k_{n-1}}\\
& = &
T_{n-2}T_{n-1}T_{n-2}T_{n-3}\cdots T_{k_{n-2}} \check{\mathbb{T}}_{n-1,k_{n-1}}.
\end{eqnarray*}
Therefore,
\begin{equation}
T_{n-1}\mathbb{T}_{n-2,k_{n-2}} \mathbb{T}_{n-1,k_{n-2}}E_J =
T_{n-2}T_{n-1}E_{J^{\prime}}\mathbb{T}_{n-2,k_{n-2}}\check{\mathbb{T}}_{n-1,k_{n-1}}
\end{equation}
where $J^{\prime}:=\theta^{-1}_{n-2,k_{n-1}}\theta^{-1}_{n-2,k_{n-2}}(J) $. Thus, by using Lemma \ref{relativepropertie1}, we get
\begin{eqnarray*}
G
& = &
\varrho_{n-1}(T_{n-2} \varrho_{n}(T_{n-1}E_{J^{\prime}})\mathbb{T}_{n-2,k_{n-2}}\check{\mathbb{T}}_{n-1,k_{n-1}})
= \mathsf{A} G^{\prime}
\end{eqnarray*}
where $G^{\prime}:= \varrho_{n-1}(T_{n-2} E_{\tau_{n,n-1}(J^{\prime})}\mathbb{T}_{n-2,k_{n-2}}\check{\mathbb{T}}_{n-1,k_{n-1}})$.
Now, we have
$$
T_{n-2} E_{ \tau_{n,n-1}(J^{\prime})}\mathbb{T}_{n-2,k_{n-2}}\check{\mathbb{T}}_{n-1,k_{n-1}}=
E_{J^{\prime\prime}}T_{n-2} \mathbb{T}_{n-2,k_{n-2}}\check{\mathbb{T}}_{n-1,k_{n-1}},
$$
where $J^{\prime\prime}:=s_{n-2} (\tau_{n,n-1}(J^{\prime}))$. Then
\begin{eqnarray*}
E_{J^{\prime\prime}} T_{n-2} \mathbb{T}_{n-2,k_{n-2}}\check{\mathbb{T}}_{n-1,k_{n-1}}
& = &
E_{J^{\prime\prime}} T_{n-2}(T_{n-2}T_{n-3}T_{n-2}\cdots T_{k_{n-2}})\check{\check{\mathbb{T}}}_{n-1,k_{n-1}} \\
& = &
E_{J^{\prime\prime}} T_{n-2}T_{n-3}T_{n-2}T_{n-3}\cdots T_{k_{n-2}}\check{\check{\mathbb{T}}}_{n-1,k_{n-1}}\\
& = &
E_{J^{\prime\prime}} T_{n-3}T_{n-2}T_{n-3}T_{n-3}\cdots T_{k_{n-2}}\check{\check{\mathbb{T}}}_{n-1,k_{n-1}}\\
& = &
 T_{n-3}(T_{n-2}E_{J^{\prime\prime\prime}})T_{n-3}T_{n-3}\cdots T_{k_{n-2}}\check{\check{\mathbb{T}}}_{n-1,k_{n-1}}
\end{eqnarray*}
where  $ J^{\prime\prime\prime}:= s_{n-2}s_{n-3}J^{\prime\prime}=(n-3,n-1)(\tau_{n,n-1}(J'))$. Therefore
$$
G^{\prime} = \mathsf{A}\, T_{n-3}E_{\tau_{n-1,n-2}(J^{\prime\prime\prime})}T_{n-3}T_{n-3}\mathbb{T}_{n-4, k_{n-2}}\check{\check{\mathbb{T}}}_{n-1,k_{n-1}}.
$$

In $H$, we have
\begin{eqnarray*}
H
& = &
\varrho_{n-1}(\varrho_{n}( \mathbb{T}_{n-2,k_{n-2}}T_{n-2}\mathbb{T}_{n-1,k_{n-1}}E_{s_{n-1}(J)}))\quad (\text{from (\ref{Tt}}))\\
& = &
\varrho_{n-1}( \mathbb{T}_{n-2,k_{n-2}}T_{n-2}\varrho_{n}(\mathbb{T}_{n-1,k_{n-1}}E_{s_{n-1}(J)}))  \quad (\text{from Lemma \ref{relativepropertie1}}) \\
& = & \mathsf{A}H^{\prime}
\end{eqnarray*}
where $H^{\prime}: =\varrho_{n-1}( \mathbb{T}_{n-2,k_{n-2}}T_{n-2}\check{\mathbb{T}}_{n-1,k_{n-1}}
E_{\tau_{n,k_{n-1}}(s_{n-1} J )})$. Observe now that
\begin{eqnarray*}
\mathbb{T}_{n-2,k_{n-2}}T_{n-2}\check{\mathbb{T}}_{n-1,k_{n-1}}
& = &
T_{n-2}T_{n-3}T_{n-2}T_{n-2}\mathbb{T}_{n-4, k_{n-2}}\check{\check{\mathbb{T}}}_{n-1,k_{n-1}}\\
& = & T_{n-3}T_{n-3}T_{n-2}
T_{n-3}\mathbb{T}_{n-4, k_{n-2}}\check{\check{\mathbb{T}}}_{n-1,k_{n-1}}.
\end{eqnarray*}
Then,
$$
\mathbb{T}_{n-2,k_{n-2}}T_{n-2}\check{\mathbb{T}}_{n-1,k_{n-1}}
E_{\tau_{n,k_{n-1}}(s_{n-1} J )}  = T_{n-3}T_{n-3}T_{n-2}E_{I}
T_{n-3}\mathbb{T}_{n-4, k_{n-2}}\check{\check{\mathbb{T}}}_{n-1,k_{n-1}}
$$
where $I := \theta_{n-3,k_{n-2}} \theta_{n-3, k_{n-1}}(\tau_{n,k_{n-1}}(s_{n-1}J ))$. Thus
\begin{eqnarray*}
H^{\prime} &  =  &  T_{n-3}T_{n-3}\varrho_{n-1}(T_{n-2}E_{I})
T_{n-3}\mathbb{T}_{n-4, k_{n-2}}\check{\check{\mathbb{T}}}_{n-1,k_{n-1}} \\
& = &  \mathsf{A}\, T_{n-3}T_{n-3}E_{\tau_{n-1,n-2}(I)}
T_{n-3}\mathbb{T}_{n-4, k_{n-2}}\check{\check{\mathbb{T}}}_{n-1,k_{n-1}}.
\end{eqnarray*}
Therefore, to have  $G^{\prime} = H^{\prime}$ and then $G=H$, it is enough to prove that
$$
T_{n-3}E_{\tau_{n-1,n-2}(J^{\prime\prime\prime})}T_{n-3}T_{n-3}
=
T_{n-3}T_{n-3}E_{\tau_{n-1,n-2}(I)}T_{n-3}
$$
i.e.  that
$$ \tau_{n-1,n-2}(J^{\prime\prime\prime})=s_{n-3}\tau_{n-1,n-2}(I).$$
The  left  member  is  equal  to
\begin{equation}\label{J1}
    \tau_{n-1,n-2}((n-3,n-1) \tau_{n,n-1}(\theta^{-1}_{n-2,k_{n-1}}\theta^{-1}_{n-2,k_{n-2}}(J)) ) \end{equation}
while  the  right  member is  equal  to
\begin{equation}\label{J2}
s_{n-3}\tau_{n-1,n-2}(\theta_{n-3,k_{n-2}} \theta_{n-3, k_{n-1}}(\tau_{n,k_{n-1}}(s_{n-1}J ))).   \end{equation}
Observe that  (\ref{J1})=(\ref{J2}) in  the  extreme situations  when $J=(\ )=(\{1\},\{2\},\dots,\{n\}\})$  and $J=(\{1,2,\dots,n\})$. In both these  cases
  (\ref{J1}) and (\ref{J2}) are  given  by  $(J\backslash n)\backslash (n-1)$.
Otherwise,  we  have  to distinguish  the  cases  $k_{n-2}<k_{n-1}$  and
   $k_{n-2}\ge k_{n-1}$,  and  the proof is  done  by  comparing  the  set--partitions.   We prefer to  avoid  two further   boring pages of  calculations, using  the  same  arguments  as in the  proof of point (i). Let's give  a non  trivial  example.
Let $n=7$,  $k_{6}=3$,  $k_{5}=1$ and  $J=(\{1,2\},\{3\},\{5,7\},\{4,6\})$. We  calculate  (\ref{J1}):
\begin{eqnarray*}
  J &=& (\{1,2\},\{3\},\{5,7\},\{4,6\}) \\
 J'=\theta^{-1}_{5,3}\theta^{-1}_{5,1}(J)  &=& (\{2,4\},\{5\},\{3,7\},\{6,1\}) \\
 J''= \tau_{7,6}(J')  &=&  (\{2,4\},\{5\},\{3, 6,1\})  \\
  J'''=(4,6)(J'') &=&  (\{2,6\},\{5\},\{3, 4,1\}) \\
  \tau_{6,5}(J''') &=&  (\{2, 5\},\{3, 4,1\}).
\end{eqnarray*}

As  for    (\ref{J2}), we  have:
 \begin{eqnarray*}
  J &=& (\{1,2\},\{3\},\{5,7\},\{4,6\}) \\
 J'= s_6(J) &=& (\{1,2\},\{3\},\{5,6\},\{4,7\}) \\
 J''= \tau_{7,3}(J')  &=&  (\{1,2\},\{5,6\}, \{3, 4\})  \\
  J'''=\theta^{-1}_{4,3}\theta^{-1}_{4,1} (J'') &=&  (\{5,1\},\{3,6\},\{2,4\}) \\
  \tilde J=\tau_{6,5}(J''') &=&  (\{5,1,3\},\{2, 4\})\\
 s_4(  \tilde J) &=&  (\{4,1,3\},\{2, 5\}).
\end{eqnarray*}
The   claim (iii) follows  directly from   claims (i) and (ii).

\end{proof}





\subsection{} \label{sectionmarkovtrace2}

Let us   define  $\varrho_1$ as the  identity  and for  every  positive integer  we define the  linear map $\rho_n: \mathcal{E}_n \rightarrow \mathcal{E}_1 = K(\mathsf{A}, \mathsf{B})$ by
$$
\rho_n =\varrho_1 \circ  \varrho_2 \circ  \cdots  \circ \varrho_{n-1}\circ \varrho_n.
$$
Observe  that
$$  \rho_n= \rho_{n-1} \circ \varrho_n,$$
and  that, for $k\leq n$ and $X\in \mathcal{E}_k$,
 \begin{equation}
\rho_n(X) = \rho_k(X).
\end{equation}
Also,  from the definition of $\varrho_n$, it follows that $\rho_n(1) =1$. Moreover, we have the following theorem.
\begin{theorem}\label{trace}
The family   $\rho := \{\rho_n\}_{n\ge 1}  $ is a Markov trace.  That is, for every  $n\ge 1$    $\rho_n$  has the following  properties:
\begin{enumerate}
\item[(i)] $\rho_n(1) = 1$
\item[(ii)] $\rho_n(XY) = \rho_n(YX)$
\item[(iii)] $\rho_{n+1}(XT_n) =\rho_{n+1}(XE_nT_n)=\mathsf{A} \rho_n(X)$
\item[(iv)] $\rho_{n+1}(XE_n) =\mathsf{B} \rho_n(X)$
\end{enumerate}
where  $X, Y\in {\mathcal E}_n$.
\end{theorem}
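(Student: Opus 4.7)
The plan is to derive all four properties directly from Theorem \ref{relativet} and formulae (\ref{RT4})--(\ref{RT5}), which already do the substantive work. Property (i) is immediate since each $\varrho_k$ is the identity on $\mathcal{E}_{k-1}$ and hence fixes $1$. Properties (iii) and (iv) follow by a one-line computation: for $X\in\mathcal{E}_n$ and $G\in\{T_n, E_nT_n, E_n\}$, the bimodule property (\ref{RT1}) gives $\varrho_{n+1}(XG)=X\,\varrho_{n+1}(G)$, and (\ref{RT4})--(\ref{RT5}) evaluate $\varrho_{n+1}(G)$ to $\A$, $\A$, $\B$ respectively. Since these are scalars in $K$, they pull past $\rho_n$, yielding $\rho_{n+1}(XG)=\rho_n(\varrho_{n+1}(XG))=\varrho_{n+1}(G)\,\rho_n(X)$.

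The trace identity (ii) is proved by induction on $n$, the base case $n=1$ being trivial since $\mathcal{E}_1=K$ is commutative. For the inductive step, one first reduces to the case where $Y$ is a defining generator of $\mathcal{E}_n$: by linearity we may assume $Y$ is a word $g_1\cdots g_\ell$ in the generators, and a subsidiary induction on $\ell$ handles the reduction. Indeed, if $Y=gY'$ with $g$ a generator and $Y'$ strictly shorter, then the inductive hypothesis applied to $(Xg, Y')$ gives $\rho_n(XgY')=\rho_n(Y'Xg)$, and then the generator case applied to $(Y'X,g)$ gives $\rho_n(Y'Xg)=\rho_n(gY'X)=\rho_n(YX)$. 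For the generator case itself there are two subcases. When $Y\in\{T_i,E_i\}$ with $i\leq n-2$, we have $Y\in\mathcal{E}_{n-1}$, so Lemma \ref{relativepropertie1} yields $\varrho_n(XY)=\varrho_n(X)\,Y$ and $\varrho_n(YX)=Y\,\varrho_n(X)$; applying $\rho_{n-1}$ and invoking the outer inductive hypothesis (on the trace property in $\mathcal{E}_{n-1}$, applied to $\varrho_n(X),Y\in\mathcal{E}_{n-1}$) concludes this case. When $Y\in\{T_{n-1},E_{n-1}\}$, writing $\rho_n=\rho_{n-2}\circ\varrho_{n-1}\circ\varrho_n$ and applying $\rho_{n-2}$ to the identities of Lemma \ref{relativepropertie3}(i)--(ii) gives the result on the nose.

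There is no substantive obstacle in this theorem beyond what Theorem \ref{relativet} already supplies; the argument is essentially an assembly of the relative-trace identities (\ref{RT1})--(\ref{RT3}) and the boundary evaluations (\ref{RT4})--(\ref{RT5}) along the tower $\mathcal{E}_{n-2}\subset\mathcal{E}_{n-1}\subset\mathcal{E}_n$. The only mild care required is in the reduction of (ii) to the generator case, but even this is a standard length induction.
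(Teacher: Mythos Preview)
Your proof is correct and follows essentially the same strategy as the paper: an induction on $n$ for the trace identity (ii), with the bimodule property (\ref{RT1}) handling elements of $\mathcal{E}_{n-1}$ and Lemma~\ref{relativepropertie3} handling the top generators, while (iii) and (iv) are one-line consequences of (\ref{RT1}) together with (\ref{RT4})--(\ref{RT5}).

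The one organizational difference is in the reduction step for (ii). The paper invokes Corollary~\ref{atmost} to write any $Y$ as a linear combination of words $Y_1FY_2$ with $Y_1,Y_2\in\mathcal{E}_{n-1}$ and $F\in\{T_{n-1},E_{n-1},T_{n-1}E_{n-1}\}$, and then cycles $Y_2$, $F$, $Y_1$ across $X$ in three moves. You instead run a length induction on words in the generators, reducing directly to the single-generator case; this is slightly more elementary in that it bypasses Corollary~\ref{atmost} (and hence the Ryom--Hansen basis), and it needs only parts (i)--(ii) of Lemma~\ref{relativepropertie3} since $T_{n-1}E_{n-1}$ is handled automatically as a length-two word. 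Both reductions are standard and lead to the same conclusion.
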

\begin{proof}

We  will prove (ii)  by induction on $n$. For $n=2$ clearly the claim is true
since $\mathcal{E}_2$ is commutative. Suppose now the claim  be true
for all $k$ less than $n$.

 Firstly, we are going  to prove (ii) for $X\in \mathcal{E}_n$ and
$Y\in \mathcal{E}_{n-1}$. We have
\begin{eqnarray*}
\rho_n(XY) & = &  \rho_{n-1}(\varrho_n(XY))\\
 & =&
 \rho_{n-1}  (\varrho_n(X)Y) \quad \text{ ((i) Lemma
\ref{relativepropertie1})}\\
  & =&
 \rho_{n-1}(Y\varrho_n(X)) \quad \text{(induction hypothesis)}\\
 & =&
 \rho_{n-1}(\varrho_n(YX))\quad \text{ ((ii) Lemma \ref{relativepropertie1}).}
 \end{eqnarray*}
 Hence,
\begin{equation}\label{proof1}
  \rho_n(XY) = \rho_n(YX) \qquad (X\in \mathcal{E}_{n}, Y\in \mathcal{E}_{n-1}).
 \end{equation}
 Secondly, we  prove (ii) for $Y\in \{T_{n-1}, \, E_{n-1},
T_{n-1} E_{n-1}\}$. We have
$\rho_n(XY)  =\rho_{n-2}(\varrho_{n-1}(\varrho_n(XY)))$, then
from     Lemma \ref{relativepropertie3}, we deduce $\rho_n(XY)  =
 \rho_{n-2} (\varrho_{n-1}(\varrho_n(YX)))$. Hence
\begin{equation}\label{proof2}
\rho_n (XY)= \rho_n(YX)\qquad (X\in \mathcal{E}_{n}, Y\in \{T_{n-1},
\, E_{n-1}, T_{n-1} E_{n-1}\}).
 \end{equation}
Now, having in mind   Corollary \ref{atmost} and the linearity of
$\rho_n$,   to prove  claim (ii) it is enough to
consider   $Y$ in the form
$Y_1FY_2$, where $Y_1, Y_2\in \mathcal{E}_{n-1}$ and $F\in \{T_{n-1},
E_{n-1}, T_{n-1}E_{n-1}\}$. Indeed,  if  $\rho_n(XY)= \rho_n(XY_1FY_2)$,
then, by using (\ref{proof1}), we have
$\rho_n(XY)= \rho_n(Y_2XY_1F)$.   By using now (\ref{proof2}), we
obtain $\rho_n(XY)= \rho_n(FY_2XY_1)$. Thus, by using again
(\ref{proof1}), we  get  $\rho_n(XY) = \rho_n(Y_1FY_2X)$. Hence,
$\rho_n(XY) = \rho_n(YX)$.

The proofs of the statements  (iii) and (iv) are   analogous. We shall prove only that, if $X\in \E_n$,  then
$\rho_{n+1}(XT_n) = \mathsf{A} \rho_n(X)$. We have:
\begin{eqnarray*}
 \rho_{n+1}(XT_n) & = &   \rho_n(\varrho_{n+1}(XT_n))\\
 & =&
 \rho_n(X\varrho_{n+1}(T_n)) \quad \text{ ((ii) Lemma \ref{relativepropertie1})}\\
  & =&
 \rho_n(X \mathsf{A}  ) \\
  & =&
   \mathsf{A} \rho_n(X).
 \end{eqnarray*}
\end{proof}


\begin{remark}\label{Econdition}\rm
 Observe  that rule (iv) in the above theorem  is the  condition on the Markov trace of the Yokonuma--Hecke algebra requested  to have the  invariant defined by S. Lambropoulou and the second author, see \cite{julaAM, julaJKTR, julaMKTA}. More precisely,
this property allows to factorize    $\rho_n(X)$ in the computation of $\rho_{n+1}(XT_n^{-1})$, where $X\in \mathcal{E}_n$, as  we  will show in the  following section (see formula (\ref{factorize})).
\end{remark}


\section{Applications to Knot invariants}\label{sectionapplications}
In this section we will construct  an invariant for classical knots and another invariant  for singular knots.
The constructions follow the Jones' recipe, that is, they are obtained from normalization and rescaling
of the composition of a representation of a braid group/singular braid monoid in $\mathcal{E}_n$ with the trace $\rho_n$.

In both invariants we will use the element of normalization $\mathsf{L}= \mathsf{L}(u, \mathsf{A}, \mathsf{B})$, defined as follows
\begin{equation}\label{definionL}
\mathsf{L} = \frac{\mathsf{A} + (1-u)\mathsf{B}}{u\mathsf{A}}, \quad \text{or  equivalently} \quad \mathsf{A} = -\frac{1-u}{1-\mathsf{L}u}\B.
\end{equation}

\subsection{}
In order to define our invariant for classical knots, we  recall  some classical facts and standard notation.
Firstly,
remember  that according to  the classical theorems of Alexander and Markov,   the set of  isotopy classes of links in
the Euclidean space is in  bijection with the set of equivalence  classes obtained from  the inductive limit
 of the tower of
braid groups $B_1\subseteq B_2
\subseteq \cdots \subseteq B_n \subseteq\cdots$, under the {\it Markov equivalence relation} $\sim$. That is, for all $\alpha , \beta\in B_n$, we have:
\begin{enumerate}
 \item[(i)] $\alpha\beta \sim \beta\alpha$
 \item[(ii)] $\alpha\sim \alpha \sigma_n $ and  $\alpha\sim \alpha \sigma_n^{-1} $.
\end{enumerate}
Secondly, let us denote  $\bar{\pi}_{\mathsf{L} }$ the  representation of $B_n$ in $\mathcal{E}_n$, namely $\sigma_i \mapsto \sqrt{\mathsf{L}} T_i$. Then, for $\alpha\in B_n$, we define
$\bar{\Delta}(\alpha) $
\begin{equation}\label{Deltabarra}
\bar{\Delta}(\alpha) :=  \left(-\frac{1-\mathsf{L}u}{\sqrt{\mathsf{L}}(1-u)\B}\right)^{n-1}(\rho_n\circ \bar{\pi}_{\mathsf{L} } )(\alpha)\in K(\sqrt{\mathsf{L}}, B).
\end{equation}
It is useful to have an alternative expression for  $\bar{\Delta}(\alpha)$, in terms of the exponent $e(\sigma)$ of $\alpha$, where
  $e(\alpha)$ is the algebraic sum of the exponents of the elementary braids $\sigma_i$ used  for writing  $\alpha$. Then, we have:

\smallbreak
\begin{equation}\label{jones}
\bar{\Delta}(\alpha) = \left(-\frac{1-\mathsf{L}u}{\sqrt{\mathsf{L}}(1-u)\B}\right)^{n-1}
(\sqrt{\mathsf{L}})^{e(\alpha)}
(\rho_n\circ \bar{\pi} )(\alpha)
\end{equation}
 where $\bar{\pi}$ is defined as  the  mapping $\sigma_i\mapsto T_i$. Now, we have

\begin{equation}\label{LDA=1}
\sqrt{\mathsf{L}}\, \bar{D}\, \mathsf{A}= 1,  \quad \text{where} \quad \bar{D} := -\frac{1-\mathsf{L}u}{\sqrt{\mathsf{L}}(1-u)\B}.
\end{equation}

Then, notice that  $\bar{\Delta}(\alpha)$ can be rewritten as follows:
$$
\bar{\Delta}(\alpha) = \bar{D}^{n-1} (\sqrt{\mathsf{L}})^{e(\alpha )} (\rho_n\circ \bar{\pi} )(\alpha).
$$
\begin{theorem}\label{Delta}
 Let  $L$  be a link  obtained by  closing the braid $\alpha\in B_n$. Then the map $L\mapsto \bar{\Delta}(\alpha) $ defines
 an isotopy invariant of links.
\end{theorem}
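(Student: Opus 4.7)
The plan is to verify that $\bar{\Delta}$ is invariant under the two Markov moves on the inductive limit of braid groups: conjugation ($\alpha\beta\sim\beta\alpha$) and positive/negative stabilization ($\alpha\sim\alpha\sigma_n^{\pm 1}$). Since $\bar{\pi}$ is a group homomorphism from $B_n$ into the units of $\mathcal{E}_n$, and $\rho_n$ is already a Markov trace by Theorem \ref{trace}, the proof reduces to matching up the normalization factors in (\ref{jones}) so that both moves leave $\bar{\Delta}$ unchanged. The identity (\ref{LDA=1}), $\sqrt{\mathsf{L}}\,\bar{D}\,\mathsf{A}=1$, is what makes this balancing work.

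For conjugation invariance, I would observe that $e(\alpha\beta)=e(\beta\alpha)$ and that the images $\bar{\pi}(\alpha\beta)$ and $\bar{\pi}(\beta\alpha)$ differ by cyclic permutation of factors, so by property (ii) of $\rho_n$ (Theorem \ref{trace}) one has $(\rho_n\circ\bar{\pi})(\alpha\beta)=(\rho_n\circ\bar{\pi})(\beta\alpha)$. The prefactor $\bar{D}^{n-1}(\sqrt{\mathsf{L}})^{e(\alpha)}$ depends only on the braid index $n$ and the exponent sum, so the two values of $\bar{\Delta}$ agree.

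For the positive stabilization $\alpha\mapsto\alpha\sigma_n\in B_{n+1}$, I would apply property (iii) of $\rho$:
\[
(\rho_{n+1}\circ\bar{\pi})(\alpha\sigma_n)=\rho_{n+1}\bigl(\bar{\pi}(\alpha)T_n\bigr)=\mathsf{A}\,\rho_n(\bar{\pi}(\alpha)).
\]
The exponent sum increases by $1$ and the braid index by $1$, so
\[
\bar{\Delta}(\alpha\sigma_n)=\bar{D}^{n}(\sqrt{\mathsf{L}})^{e(\alpha)+1}\,\mathsf{A}\,\rho_n(\bar{\pi}(\alpha))=\bigl(\sqrt{\mathsf{L}}\,\bar{D}\,\mathsf{A}\bigr)\bar{\Delta}(\alpha)=\bar{\Delta}(\alpha)
\]
by (\ref{LDA=1}).

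The negative stabilization is the step where one has to work a little, though it is still routine. Using Proposition \ref{relations}(i) to write
\[
T_n^{-1}=T_n+(u^{-1}-1)E_n+(u^{-1}-1)E_nT_n,
\]
and then applying properties (iii) and (iv) of Theorem \ref{trace} term by term, one obtains
\[
\rho_{n+1}\bigl(\bar{\pi}(\alpha)T_n^{-1}\bigr)=\bigl[\mathsf{A}+(u^{-1}-1)\mathsf{B}+(u^{-1}-1)\mathsf{A}\bigr]\rho_n(\bar{\pi}(\alpha))=u^{-1}\bigl[\mathsf{A}+(1-u)\mathsf{B}\bigr]\rho_n(\bar{\pi}(\alpha))=\mathsf{L}\mathsf{A}\,\rho_n(\bar{\pi}(\alpha)),
\]
where the last equality is the definition (\ref{definionL}) of $\mathsf{L}$. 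Inserting this into (\ref{jones}) gives
\[
\bar{\Delta}(\alpha\sigma_n^{-1})=\bar{D}^{n}(\sqrt{\mathsf{L}})^{e(\alpha)-1}\mathsf{L}\mathsf{A}\,\rho_n(\bar{\pi}(\alpha))=\bigl(\sqrt{\mathsf{L}}\,\bar{D}\,\mathsf{A}\bigr)\bar{\Delta}(\alpha)=\bar{\Delta}(\alpha),
\]
again by (\ref{LDA=1}). With both stabilization moves and conjugation verified, Alexander--Markov yields a well-defined invariant $\bar{\Delta}(L)$ of the isotopy class of the link $L$. The main (and only) subtle point is that the definition of $\mathsf{L}$ was chosen precisely to make the $T_n^{-1}$ computation collapse to a single multiple of $\rho_n(\bar{\pi}(\alpha))$; this is the bookkeeping step to execute carefully.
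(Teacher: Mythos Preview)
Your proof is correct and follows essentially the same approach as the paper: reduce to the Markov moves, handle conjugation via the trace property (ii), and for stabilization expand $T_n^{-1}$ via Proposition~\ref{relations}(i), apply the Markov trace rules (iii) and (iv), and simplify using the definition of $\mathsf{L}$ and the identity $\sqrt{\mathsf{L}}\,\bar{D}\,\mathsf{A}=1$. Your write-up is in fact slightly more explicit than the paper's in treating the positive stabilization separately rather than declaring it ``easy to check.''
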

\begin{proof}
It is enough to prove that $\bar{\Delta}$ respects the Markov equivalence relations. In virtue of   Theorem \ref{trace} (ii), it is evident that $\bar{\Delta}$ respects the first Markov equivalence. We have  to   prove    the    second Markov equivalence.  Again, it is  easy to check that $\bar{\Delta}(\alpha) = \bar{\Delta} (\alpha \sigma_n )$. In fact, up to
 now
 we have only used the properties of the trace $\rho_n$,  in   which  the elements   $E_i$'s do not play  any role.  But now,  to prove that
  $\bar{\Delta}(\alpha)=\bar{\Delta}(\alpha \sigma_n^{-1})$,   the defining conditions of $\rho_n$ involving the elements $E_i$'s are crucial (see Remark \ref{Econdition}).

For every $\alpha\in B_n$ we have
\begin{eqnarray*}
 \bar{\Delta}(\alpha\sigma_n^{-1})
 & =   &
 \bar{D}^{n}(\sqrt{\mathsf{L}})^{e(\alpha \sigma_n^{-1})} (\rho_n( \bar{\pi} (\alpha\sigma_n^{-1}))\\
  & =   &
 \bar{D}^{n}(\sqrt{\mathsf{L}})^{e(\alpha) -1 } (\rho_n(\bar{\pi} (\alpha) T_n^{-1})).
\end{eqnarray*}
By using the formulae of $T^{-1}_n$ (see Proposition \ref{relations}) and the defining rule of $\rho_n$, we deduce:
\begin{equation}\label{factorize}
\rho_n(\bar{\pi}(\alpha) T_n^{-1}) = \rho_n(\bar{\pi} (\alpha))(  \mathsf{A} + (u^{-1}-1)\mathsf{B} + (u^{-1}-1)  \mathsf{A}).
\end{equation}
Then
\begin{eqnarray*}
\bar{\Delta}(\alpha\sigma_n^{-1})
& = &
\bar{D}^{n}(\sqrt{\mathsf{L}})^{e(\alpha) -1 } ( u^{-1} \mathsf{A} + (u^{-1}-1)\mathsf{B})
\rho_n(\bar{\pi} (\alpha))\\
& = &
 (\bar{D}/ \sqrt{\mathsf{L}})
 ( u^{-1} \mathsf{A} + (u^{-1}-1)\mathsf{B})\bar{D}^{n-1}(\sqrt{\mathsf{L}})^{e(\alpha) }
\rho_n(\bar{\pi} (\alpha))\\
& =&
(\bar{D}/ \sqrt{\mathsf{L}})
\mathsf{A}\mathsf{L}\bar{D}^{n-1}(\sqrt{\mathsf{L}})^{e(\alpha) } \rho_n(\bar{\pi} (\alpha))\\
& = &
\bar{\Delta}(\alpha)\qquad \text{(by (\ref{LDA=1}))}.
\end{eqnarray*}
\end{proof}

\begin{example} Let  $\alpha$ be   the simplest oriented link,  formed  by  two oriented  circles  with  two  positive  crossings. We  obtain
$$\rho_n(\bar{\pi} (\alpha))= 1+(\A+\B)(u-1) $$
and
$$\bar{\Delta}(\alpha)=  \sqrt{\frac{\A+\B(1-u)}{u\A}}\left(\frac{1+(\A+\B)(u-1)}{\A}\right).$$
\end{example}
\begin{example}
Let  $\gamma$  be   the  trefoil knot with  positive  crossings. We  obtain
$$\rho_n(\bar{\pi} (\gamma))= \frac{\B(1-u+u^2-u^3)+\A(1-u+u^2)}{u^3} $$
and
$$\bar{\Delta}(\gamma)= \frac{\A(-u^3\B+u^2\B-u\B+\B+u^2\A-u\A+\A)}{u(\A+\B-u\B)^2}.$$
 \end{example}

\subsection{}
 For the singular links in the Euclidean  space,  the   singular braid monoid plays an analogous role as that of the braid group  for the classical  links.
 The singular braid monoid  was introduced independently by three authors:
J. Baez, J. Birman and L. Smolin (see  \cite{juJKTR} and the references therein).
\begin{definition}
 The singular braid monoid $SB_n$ is defined as the monoid generated by the  usual braid  generators
 $\sigma_1, \ldots , \sigma_{n-1}$ (invertible)   subject to the following relations:
the  braid relations among the $\sigma_i$'s   together with the following relations:
 $$
 \begin{array}{cccl}
\tau_i \tau_j & = &  \tau_j \tau_i \quad  & \text{for}\quad \vert i-j\vert > 1 \\
  \sigma_i \tau_j & = &  \tau_j \sigma_i \quad  & \text{for}\quad \vert i-j\vert > 1 \\
  \sigma_i \tau_i & = &  \tau_i \sigma_i \quad  & \text{for all} \quad   i  \\
  \sigma_i \sigma_j \tau_i & = &  \tau_j \sigma_i \sigma_j\quad  & \text{for}\quad \vert i-j\vert = 1.
 \end{array}
$$
\end{definition}
Now, in an analogous way to the classical links, we   define the isotopy of the singular links   in the
Euclidean space
in   purely  algebraic terms. More precisely, for the singular links  we have the  analogue  of
the classical Alexander theorem,  which is due to  J. Birman \cite{bi}. We have also the analogue  of the classical
Markov theorem, which is due to  B. Gemein \cite{gemein}. Thus, the set of the isotopy classes of singular knots is in bijection
with the set of equivalence classes defined on the inductive limit associated to the tower of monoids: $SB_1\subseteq SB_2
\subseteq \cdots \subseteq SB_n \subseteq\cdots$ with respect to  the equivalence relation $\sim_s$:
 \begin{enumerate}
 \item[(i)] $\alpha  \beta \sim_s \beta \alpha $
 \item[(ii)] $\alpha\sim_s\alpha \sigma_n $ and  $\alpha\sim_s \alpha \sigma_n^{-1} $
 \end{enumerate}
for all $\alpha, \beta \in SB_n$.

Now we have to define a representation of $SB_n$ in the algebra $\mathcal{E}_n$. This representation uses the same
expression as in \cite{julaJKTR} for its definition. More precisely, we define the representation $\bar{\delta}$ by mapping:
$$
\sigma_i \mapsto T_i \quad \text{and} \qquad \tau_i\mapsto E_i(1 + T_i).
$$
\begin{proposition}
 $\bar{\delta}$ is a representation.
\end{proposition}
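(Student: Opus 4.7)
The plan is to verify, one by one, that the five families of defining relations of $SB_n$ are preserved under the assignment $\sigma_i \mapsto T_i$, $\tau_i \mapsto E_i(1+T_i)$. The braid relations among the $\sigma_i$'s are automatic, since the $T_i$'s satisfy the braid relations by definition (\ref{E1})--(\ref{E2}). Thus only the four families of monoid relations involving the $\tau_i$'s require checking, and they are all direct computations in $\mathcal{E}_n$ using the relations (\ref{E1})--(\ref{E9}).

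For the commutation $\tau_i\tau_j=\tau_j\tau_i$ with $|i-j|>1$, I would expand both sides and invoke (\ref{E1}), (\ref{E4}), and (\ref{E7}) to move each pair of factors past each other independently. The relation $\sigma_i\tau_j=\tau_j\sigma_i$ for $|i-j|>1$ is the same style of computation, relying on (\ref{E1}) and (\ref{E7}). For $\sigma_i\tau_i=\tau_i\sigma_i$, one expands to $T_iE_i + T_iE_iT_i$ versus $E_iT_i + E_iT_i^2$; the first terms match by (\ref{E6}), and the second terms match once one rewrites $T_iE_iT_i=E_iT_i^2$ using (\ref{E6}) again.

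The main step is the mixed braid-type relation $\sigma_i\sigma_j\tau_i=\tau_j\sigma_i\sigma_j$ for $|i-j|=1$, which requires (\ref{E9}). Expanding, one must verify
\begin{equation*}
T_iT_jE_i + T_iT_jE_iT_i \;=\; E_jT_iT_j + E_jT_jT_iT_j.
\end{equation*}
By (\ref{E9}) (applied with the roles of $i$ and $j$ exchanged), $T_iT_jE_i=E_jT_iT_j$, matching the first terms. Multiplying this identity on the right by $T_i$ gives $T_iT_jE_iT_i=E_jT_iT_jT_i$, and the braid relation (\ref{E2}) then rewrites the right-hand side as $E_jT_jT_iT_j$, matching the second terms. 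This closes the verification.

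I do not expect any serious obstacle: the computation is entirely mechanical, the only subtlety being to select the correct instance of (\ref{E9}) for the last relation (the relation is symmetric in the way one names the adjacent pair, so one must apply it in the form $T_iT_jE_i=E_jT_iT_j$ rather than the form printed in (\ref{E9})). Once each of the five relation families has been verified, $\bar{\delta}$ extends uniquely to a monoid homomorphism $SB_n\to\mathcal{E}_n^{\times}\cdot\mathcal{E}_n$, proving it is a representation.
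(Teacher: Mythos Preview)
Your proposal is correct and follows exactly the approach the paper indicates: the paper's proof consists of the single sentence ``It is straightforward to verify that the images of the defining generators of $SB_n$ satisfy the defining relations of $SB_n$,'' and you have simply carried out that verification in detail. The only remark is that your final phrase ``monoid homomorphism $SB_n\to\mathcal{E}_n^{\times}\cdot\mathcal{E}_n$'' is awkward notation; it would be cleaner to say that $\bar{\delta}$ extends to a monoid homomorphism from $SB_n$ to the multiplicative monoid of $\mathcal{E}_n$.
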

\begin{proof}
It is   straightforward   to verify that the images of the defining generators of $SB_n$ satisfy the defining relations of
 $SB_n$.
\end{proof}

In order to define our invariant for singular knots we need to introduce the exponent for the elements of $SB_n$.
From the definition of $SB_n$, it follows that every element  $\omega\in SB_n$  can be written in the
form
$$
\omega = \omega_1^{\epsilon_1}\cdots \omega_m^{\epsilon_m}
$$
where the $\omega_i$ are taken from the defining generators of $SB_n$ and $\epsilon_i= 1$ or $-1$, and assuming moreover
that in the
case $\omega_i$ is  any of  the  generators $\tau_i$,  its  exponent $\epsilon_i$ is by definition equal to $1$. Then  we
  have the following
\begin{definition}\cite[Definition 2]{julaJKTR}
The exponent $\epsilon(\omega)$ of $\omega$ is defined as the sum $\epsilon_1 +\cdots + \epsilon_m.$
\end{definition}
For $\omega \in SB_n$, we define $\bar{\Gamma}$ as follows
$$
\bar{\Gamma}(\omega) =  \left(-\frac{1-\mathsf{L}u}{\sqrt{\mathsf{L}}(1-u)B}\right)^{n-1}
(\sqrt{\mathsf{L}})^{\epsilon(\omega)}
(\rho_n\circ \bar{\delta} )(\omega).
$$

We have then the following theorem.
\begin{theorem}\label{Gamma}
 Let $L$ be a singular link obtained by closing $\omega\in SB_n$; then the
 mapping $L\mapsto \bar{\Gamma}(\omega)$ defines an invariant of singular links.
\end{theorem}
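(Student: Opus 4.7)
The plan is to imitate the proof of Theorem~\ref{Delta}, showing that $\bar{\Gamma}$ respects the two defining relations of the singular Markov equivalence $\sim_s$ on the inductive tower $SB_1\subseteq SB_2\subseteq\cdots$. The key observation is that both stabilization moves involve only the invertible generator $\sigma_n$, not the singular $\tau_n$, so the entire algebraic computation is formally the same as in the classical case; only the braid generators $\sigma_i$ need to behave correctly under stabilization, and $\bar{\delta}(\sigma_i)=T_i$ exactly as for $\bar{\pi}$. In particular, the only facts needed about $\bar{\delta}$ are that it is a monoid homomorphism $SB_n\to\mathcal{E}_n$ (just proved) and that $\bar{\delta}(\sigma_n)=T_n$.

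First I would check conjugation invariance. For $\alpha,\beta\in SB_n$ the exponents satisfy $\epsilon(\alpha\beta)=\epsilon(\beta\alpha)$, and since $\bar{\delta}$ is a homomorphism, $\bar{\delta}(\alpha\beta)=\bar{\delta}(\alpha)\bar{\delta}(\beta)$ with both factors in $\mathcal{E}_n$. Theorem~\ref{trace}(ii) then gives $\rho_n(\bar{\delta}(\alpha)\bar{\delta}(\beta)) = \rho_n(\bar{\delta}(\beta)\bar{\delta}(\alpha))$, so $\bar{\Gamma}(\alpha\beta)=\bar{\Gamma}(\beta\alpha)$.

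Next I would treat positive stabilization. For $\omega\in SB_n$, using $\bar{\delta}(\sigma_n)=T_n$ and $\epsilon(\omega\sigma_n)=\epsilon(\omega)+1$, and noting $\bar{\delta}(\omega)\in\mathcal{E}_n$, Theorem~\ref{trace}(iii) yields $\rho_{n+1}(\bar{\delta}(\omega)T_n)=\mathsf{A}\,\rho_n(\bar{\delta}(\omega))$. Hence
\begin{equation*}
\bar{\Gamma}(\omega\sigma_n) = \bar{D}^{n}(\sqrt{\mathsf{L}})^{\epsilon(\omega)+1}\mathsf{A}\,\rho_n(\bar{\delta}(\omega)) = (\sqrt{\mathsf{L}}\,\bar{D}\,\mathsf{A})\,\bar{D}^{n-1}(\sqrt{\mathsf{L}})^{\epsilon(\omega)}\rho_n(\bar{\delta}(\omega)) = \bar{\Gamma}(\omega),
\end{equation*}
by the normalization identity (\ref{LDA=1}). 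For negative stabilization, I would expand $T_n^{-1}$ via (\ref{inverse}) and use Theorem~\ref{trace}(iii)--(iv) together with the linearity of $\rho_{n+1}$ to obtain
\begin{equation*}
\rho_{n+1}(\bar{\delta}(\omega)T_n^{-1}) = \bigl(u^{-1}\mathsf{A} + (u^{-1}-1)\mathsf{B}\bigr)\rho_n(\bar{\delta}(\omega)),
\end{equation*}
exactly as in (\ref{factorize}). A short check using the relation $\mathsf{A}=-\frac{1-u}{1-\mathsf{L}u}\mathsf{B}$ equivalent to the definition of $\mathsf{L}$ shows that the scalar factor equals $\mathsf{L}\mathsf{A}$, and one more application of (\ref{LDA=1}) gives $\bar{\Gamma}(\omega\sigma_n^{-1})=\bar{\Gamma}(\omega)$.

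I do not expect any real obstacle: the singular generators $\tau_i$ never appear in the Markov moves, and the representation $\bar{\delta}$ routes everything into $\mathcal{E}_n$ where Theorem~\ref{trace} does the work. The one point to double--check is that the exponent convention $\epsilon(\omega)$, which assigns $+1$ to every occurrence of a $\tau_i$ rather than $\pm 1$, is compatible with the invariance: since conjugation preserves $\epsilon$ on the nose, and stabilization adds only $\pm 1$ corresponding to $\sigma_n^{\pm 1}$, no issue arises. The proof is therefore structurally identical to that of Theorem~\ref{Delta}, with $\bar{\pi}$ replaced by $\bar{\delta}$ and $e(\alpha)$ by $\epsilon(\omega)$.
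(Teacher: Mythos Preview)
Your proposal is correct and follows exactly the approach the paper indicates: the paper's own proof simply states that it is ``totally analogous'' to \cite[Theorem 5]{julaJKTR} and to the proof of Theorem~\ref{Delta}, and what you have written is precisely that analogous argument carried out in detail, with $\bar{\pi}$ replaced by $\bar{\delta}$ and $e(\alpha)$ by $\epsilon(\omega)$.
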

\begin{proof}
The  proof  is totally analogous to the proof of \cite[Theorem 5]{julaJKTR}. See  also the  proof of Theorem \ref{Delta}.
\end{proof}

\subsection{Comparisons}
  In this subsection we shall  show how to obtain    known invariant polynomials  for classical knots  from the three--variable invariant  $\bar{\Delta}$ defined in this paper.

\smallbreak

 In  \cite[Section 6]{joAM} V. Jones constructed  a Homflypt polynomial, denoted $\rm X$, invariant  for  classical links, through   the composition of the Ocneanu trace $\tau_n$, of parameter $z$, on   ${\rm H}_n$ and the representation $\pi_{\lambda}:B_n\rightarrow {\rm H}_n$, $\sigma_i \mapsto \sqrt{\lambda} h_i$, where
$$
\lambda = \frac{z + (1-u)}{uz}.
$$
More precisely, for $\alpha\in B_n$, such Homflypt polynomial is defined by
$$
{\rm X}(\alpha) =\left(-\frac{1-\lambda u}{\sqrt{\lambda}(1-u)} \right)(\tau_n \circ  \pi_{\lambda})(\alpha).
$$
Thus, setting $\mathsf{A}=z$ and  $\mathsf{B}= 1$ in (\ref{definionL}), we obtain $\mathsf{L}= \lambda$. Then, for   $\varphi_n$ of Remark \ref{EontoH}, we have  $\varphi_n\circ \pi_{\mathsf{L}}= \pi_{\lambda}$. Also,  for these values of $\mathsf{A}$ and  $\mathsf{B}$ we  have  $\varphi_n\circ \tau_n= \rho_n$. Then
$$
\tau_n \circ \pi_{\lambda} = \tau_n \circ (\varphi_n\circ \pi_{\mathsf{L}}) = \rho_n \circ \pi_{\mathsf{L}}.
$$
Therefore, it follows that the Homflypt polynomial $\rm X$  can be obtained
   by taking $\mathsf{A}=z$ and specializing $\B=1$.

\smallbreak
Now  we show  how  to  obtain    from  $\bar{\Delta}$ the  two--parameters invariants of  classical  links  defined in \cite{julaMKTA}.

The Yokonuma--Hecke algebra ${\rm Y}_{d,n}$ also supports a Markov  trace, denoted ${\rm tr}$,  of parameters $z$  and $x_1, \ldots , x_{d-1} $, see \cite[Theorem 12]{juJKTR}. In  \cite{julaAM} it is  proved that for certain specific values of the  trace parameters  $x_i$'s it is possible to construct an invariant of classical links $\Delta$. More precisely, these specific values, which are solutions of the so--called  $E$--system,  are  parametrized  by    non--empty subsets of the group of integers modulo $d$. Now, given   such  a subset $S$, we shall denote   ${\rm tr}_S$ the trace ${\rm tr}$, whenever the parameters $x_k$'s are taken as the solutions of the $E$--system,   parametrized by $S$. Now, the mapping  $\sigma_i\mapsto \sqrt{ \lambda_S} g_i$  defines a representation ${\tilde{\pi}}_{\lambda_S}$   of  $B_n$ in ${\rm Y}_{d,n}$, where
$$
\lambda_S = \frac{z + (1-u)/\vert S\vert}{uz}.
$$
The two--variable polynomial invariant of classical knots $\Delta$ is defined  as follows
$$
\Delta (\alpha) =\left(-\frac{1-\lambda_S u}{\sqrt{\lambda_S}(1-u)} \right)({\rm tr}_S \circ  \tilde{\pi}_{\lambda_S})(\alpha)\qquad (\alpha\in B_n)
$$
for details see \cite{julaMKTA}. By taking the parameter $z=\mathsf{A}$ and specializing $B$ to $1/\vert S\vert$, we get that $\lambda_S= \mathsf{L}$. Then, we have $\psi_n\circ \bar{\pi}_{\mathsf{L}}= \tilde{\pi}_{\lambda}$ and ${\rm tr}_S\circ \psi_n = \rho_n$, where $\psi_n$ is  defined in  Proposition  \ref{EtoYH}. Thus,
$$
{\rm tr}_S\circ \tilde{\pi}_{\lambda_S} = {\rm tr}_S\circ (\psi_n\circ \bar{\pi}_{\mathsf{L}})=
   \rho_n\circ \bar{\pi}_{\mathsf{L}}.
$$
Therefore, also the two--variable invariant of classical links $\Delta$ can be obtained
from the three-variable invariant $\bar{\Delta}$.



\section{A diagrammatical interpretation }\label{sectiondiagram}

In this section we recall  a diagrammatical   interpretation of the defining generators of $\mathcal{E}_n(u)$, given in \cite{aijuICTP1}.
 Furthermore,   derived from this diagrammatical interpretation, we  show a   diagrammatic interpretation of the basis
 constructed by S. Ryom--Hansen,  in  which    one part of the elements of this basis   looks  as      {\sl elastic} ties.

Such  interpretation  provides in fact  an  epimorphism  from  the  algebra  $\E_n$ to an algebra of  diagrams, that we call $\widetilde \E_n$,   generated by  the  braid generators,   by  elastic  ties   having  some  peculiar properties explained  below,  and satisfying  the quadratic  relation (\ref{E3}).    We  do  not  furnish here a formal proof  that  $\E_n$  and  $\widetilde \E_n$  are  isomorphic. However,  in what follows,  we will see that this  isomorphism is  crucial,  because the  diagram  calculus  is  considerably  easier  than  the  algebraic one.   Therefore we enunciate the following

\begin{conjecture} The algebras   $\E_n$ and $\widetilde \E_n$  are  isomorphic.
\end{conjecture}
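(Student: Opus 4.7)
The plan is to prove the conjecture by exhibiting an explicit surjection $\Phi:\mathcal{E}_n\to\widetilde{\mathcal{E}}_n$ coming from the diagrammatic interpretation and then using the Ryom--Hansen basis (Theorem \ref{basEn}) to show that $\Phi$ is in fact a bijection. Since $\dim\mathcal{E}_n=b_n n!$ is already known, it suffices to exhibit a linearly independent set of cardinality $b_n n!$ in $\widetilde{\mathcal{E}}_n$ whose preimage lies inside the image of $\mathcal{B}_n$ under $\Phi$.

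First, I would define $\Phi$ on generators by sending $T_i$ to the braid diagram of $\sigma_i$ and $E_i$ to the elastic tie between the $i$-th and $(i+1)$-st strands, and check that all the defining relations (\ref{E1})--(\ref{E9}) hold diagrammatically. The braid relations (\ref{E1}), (\ref{E2}) and the quadratic relation (\ref{E3}) hold by construction of $\widetilde{\mathcal{E}}_n$. The relations (\ref{E4})--(\ref{E9}) should translate into simple graphical facts about elastic ties: commutation of disjoint ties, idempotency of a tie, absorption of a crossing by a tie linking the two crossing strands, and transport of a tie across a braid generator (using the \emph{elastic} nature of ties, i.e.\ that a tie can be deformed and that the tie between strands $i,j$ equals the tie obtained after conjugation by any braid permuting these strands). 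This establishes that $\Phi$ is a well-defined algebra homomorphism and, by construction, surjective.

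Second, I would build a normal form for diagrams in $\widetilde{\mathcal{E}}_n$ that mirrors the canonical expression (\ref{canonicalform}). Concretely, any diagram can be isotoped so that all elastic ties are pushed to the top (or the bottom) of the diagram, above a positive braid that realises a reduced word for a unique permutation $w\in S_n$; meanwhile, the collection of ties, taken up to the graphical relations, determines a unique set-partition $I\in\mathsf{P}(\mathbf{n})$, whose blocks record which top endpoints are tie-connected. The crucial point here is that (\ref{w(E)}) is visibly just the graphical rule for conjugating ties by braid strands, so pulling ties across the underlying braid produces $E_{w(I)}$ rather than a new tie pattern. This normal form shows that $\Phi(T_w E_I)$ is uniquely determined by the pair $(w,I)$ as a diagram.

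Third, I would invoke a separation argument to conclude linear independence: the pair $(w,I)$ is actually \emph{read off} from a reduced diagram by forgetting crossings to recover the underlying permutation $w$ (the connection pattern between bottom and top endpoints) and then reading the tie connectivity on the top to recover $I$. Since these two invariants take distinct values on the $b_n n!$ pairs $(w,I)$, the diagrams $\Phi(T_w E_I)$ are pairwise distinct, hence linearly independent in $\widetilde{\mathcal{E}}_n$. Combined with surjectivity and with $\dim\mathcal{E}_n=b_n n!$, this gives $\Phi$ injective and therefore an isomorphism.

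The main obstacle, as the authors themselves note by leaving this as a conjecture, is purely formal: one must give a rigorous axiomatic definition of the diagrammatic algebra $\widetilde{\mathcal{E}}_n$ (what precisely are the ``elastic ties'', what are the allowed planar isotopies and local moves, and what is the composition rule) such that two formally distinct diagrams representing different pairs $(w,I)$ are also distinct elements of $\widetilde{\mathcal{E}}_n$. Once this framework is in place, the verification of the relations (\ref{E1})--(\ref{E9}) and of the normal form above becomes essentially a bookkeeping exercise using the ``isotopy'' of ties; the subtle point is to make sure the graphical relations imposed on elastic ties are strong enough to prove all the defining relations of $\mathcal{E}_n$ but weak enough not to collapse the space below dimension $b_n n!$.
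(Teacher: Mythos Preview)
The paper does not prove this statement at all; it is stated explicitly as a conjecture, and the authors say just before it that ``we do not furnish here a formal proof that $\mathcal{E}_n$ and $\widetilde{\mathcal{E}}_n$ are isomorphic.'' So there is no proof in the paper to compare your proposal against; you are attempting to settle an open point.

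As for the proposal itself, there is a genuine gap in your third step. You argue that the diagrams $\Phi(T_wE_I)$ are pairwise distinct because one can read off the pair $(w,I)$ from a reduced diagram, and you then conclude that they are linearly independent. Distinctness of elements does not imply linear independence in an algebra defined by relations. The algebra $\widetilde{\mathcal{E}}_n$ carries, at minimum, the quadratic relation (\ref{E3}), which is a genuine $K$--linear relation among diagrams; nothing in your argument excludes that iterated applications of this relation, combined with the local tie moves, produce a nontrivial $K$--linear dependency among the $b_nn!$ diagrams in normal form. What you would actually need is either a confluence/diamond--lemma argument showing that every diagram reduces to a \emph{unique} $K$--linear combination of normal forms, or a concrete representation of $\widetilde{\mathcal{E}}_n$ on a module of dimension at least $b_nn!$ in which the images of the $\Phi(T_wE_I)$ are linearly independent. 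Your ``read off $(w,I)$'' map is only a set--theoretic invariant of a single diagram in normal form; it is not a linear functional on $\widetilde{\mathcal{E}}_n$ and therefore cannot detect linear dependencies.

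You are in fact aware of this: your final paragraph names exactly this issue (``weak enough not to collapse the space below dimension $b_nn!$''). But that is not a side remark; it is the entire content of the conjecture. Your first two steps (constructing the surjection $\Phi$ and pushing ties to a normal position) are routine and essentially already in the paper; the conjecture lives precisely in the step you have not carried out.
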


 \subsection{}
 In \cite{aijuICTP1} we have interpreted the  generator $T_i$  as the usual braid  generator   and the generator
 $E_i$ as a tie  between  the   consecutive  strings $i$ and $i+1$.

 \begin{figure}[h]
   \centering
 \includegraphics{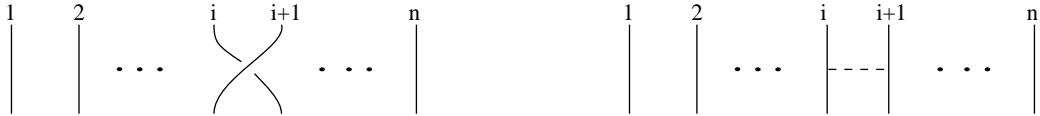}
 \caption{ Generators $T_i$, left,  and $E_i$, right}\label{Fig2}
  \end{figure}

The  relations (\ref{E1})  and (\ref{E2}) of $\E_n$, involving only  the generators $T_i$'s, have  thus  the  usual  interpretation in  terms of  braid  diagrams. The  diagrammatic  interpretation  of the other  relations,  involving  also  the  generators  $E_i$'s, are  shown  in Figures   \ref{btA3} -- \ref{btA5}.

 \begin{figure}[h]
  \centering
  \includegraphics{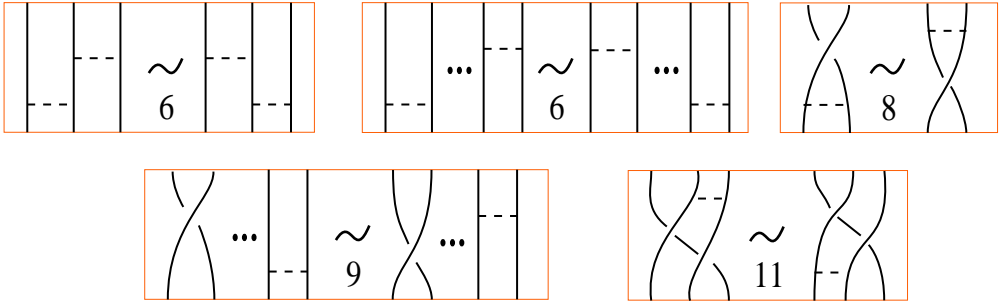}
  \caption{Relations (6),  (8), (9) and (11) in  diagrams}\label{btA3}
   \end{figure}

Observe  that relations (\ref{E4}), (\ref{E6}), (\ref{E7}) and  (\ref{E9}),  depicted  in Figure  \ref{btA3},    simply  indicate   that a  tie  between  two  threads can move  upwards  and downwards along  a  braid as long  as  {\sl such threads  maintain   unit  distance} (we can always  suppose  that  at  each  crossing the  threads maintain their    distance in the three-dimensional space).

On the  other  hand, observe that  this  shifting property of ties  does  not give reason for relation (\ref{E8}), that is shown in  Figure  \ref{btA4}.   Relation  (\ref{E5})  in  Figure  \ref{btA4}   says  that  two  or  more  ties  between  two  threads are equivalent  to  one sole  tie.

 \begin{figure}[h]
  \centering
 \includegraphics{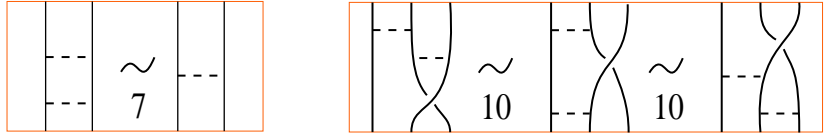}
 \caption{Relations  (7) and (10)  in  diagrams}\label{btA4}
  \end{figure}

Finally, as in the Hecke algebra, the \lq quadratic relation\rq \   (\ref{E3}) takes account of the splitting of the
square of the braid generators in terms of the defining generators. This  relation  is  formally  shown, in terms of diagrams, in Figure \ref{btA5}.

 \begin{figure}[h]
   \centering
  \includegraphics{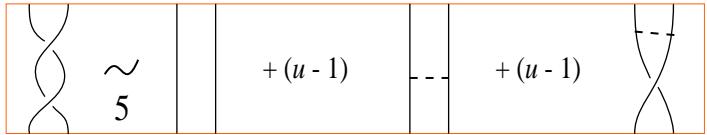}
  \caption{Relation  (5)   in  diagrams}\label{btA5}
   \end{figure}

\begin{remark}\rm \label{rf2} Relations  (\ref{E6})--(\ref{E9})
in $\E_n$ hold  also if  all  generators  $T_i$'s  are  replaced  by  their  inverses.  This  is  verified by  using  formula   (\ref{inverse}) for  the  $T_i^{-1}$s and   relation  (\ref{E5}).  The
 diagrams     of the new   relations  for  the  $T_i^{-1}$'s are  obtained   replacing  the  positive  crossing  by negative  crossings in   the  corresponding diagrams.

However,  substituting  only   $T_i$ (or  only $T_j$)  by  its  inverse in  (\ref{E9}),  we obtain for  instance  the  following
 $$
 E_{i+1}T_i^{-1} T_{i+1} = \quad  T_i ^{-1}  T_{i+1}E_i.
 $$
 This  relation  is  depicted  in figure \ref{btA6}.  We can  thus  observe,  as  we have already  done in  \cite{aijuICTP1}, that  {\sl a  tie  is   allowed to  bypass a thread}.

 \begin{figure}[h]
   \centering
 \includegraphics{btA6.pdf}
  \caption{ }\label{btA6}
   \end{figure}

\end{remark}

 \subsection{Elastic  ties}
Recall that  the linear  basis  constructed by Ryom--Hansen (Theorem \ref{basEn}) for $\mathcal{E}_n$  consists of elements of the form
$T_wE_I$, where $w\in S_n$ and $I\in \mathsf{P}_n$. The diagrammatic interpretation   for the elements $T_w$ is standard since the elements $T_i$'s are represented by usual braids.

Remember that the elements $E_I$'s  are  defined by  means  of  the
 $E_{i,j}$'s, where $i<j$, see (\ref{Eij}).  We  introduce now a  simple  diagrammatic  representation  of   the element  $E_{i,j}$, by  means of   an {\sl  elastic  tie} (or {\sl  spring})  connecting the  threads $i$  and  $j$, see  Figure \ref{FEij}.   We  shall say  that  the  spring representing  $E_{i,j}$  has  {\sl  length} $j-i$,  so that  the  element  of  unit  length $E_{i,i+1}$  coincides  with   $E_i$.

 \begin{figure}[h]
   \centering
  \includegraphics{btA7.pdf}
  \caption{ }\label{FEij}
  \end{figure}

Because  of  the   elastic property  of  the  springs, we   immediately see the  accordance    with  the  original  definition of $E_{i,j}$:
$$
E_{i,j} = T_i \cdots T_{j-2} E_{j-1}T_{j-2}^{-1}\cdots T_{i}^{-1}.
$$
Moreover,  in  Figure \ref{btA8}  we show how $E_{i,j}$  (here,  $E_{2,7}$)  can  be  written  in an equivalent manner by  different   elements  of  the  algebra. (In  fact, any  generator $T_k$ at   left of $E_{j-1}$ may  have  positive  or  negative  unit exponent,  providing  that  at   right of $E_{j-1}$  the same generator  has  the  opposite  exponent).

 \begin{figure}[h]
 \centering
 \includegraphics{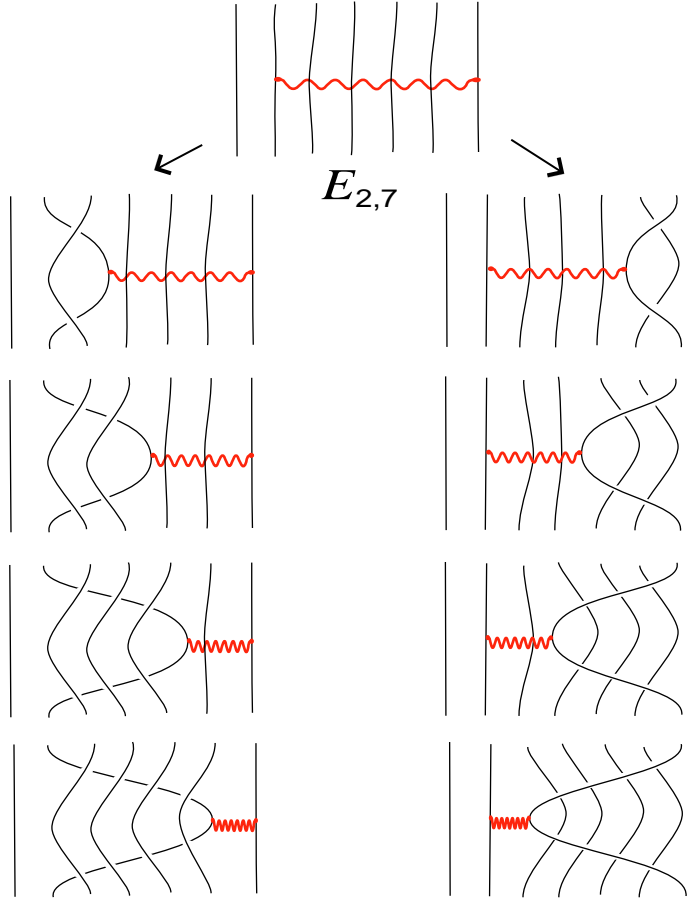}
 \caption{$E_{2,7}= T_2T_3 T_4 T_5  E_6 T_5^{-1}T_4^{-1}T_3^{-1}T_2^{-1} \quad  \sim   \quad  T_6 T_5 T_4 T_3 E_2 T_3^{-1}T_4^{-1}T_5^{-1}T_6^{-1}$.}\label{btA8}
  \end{figure}

\subsection{Properties of  the  elastic  ties} Besides {\it  elasticity}, the  springs  have  some   properties  that  can be  deduced  by  algebraic  calculations  (see  \cite{AJlinks}  for more details  and  proofs).  Here we  show  these properties.

\subsubsection{Transparency} Firstly,    the ties  are  {\sl transparent} for  the  threads,  i.e., they  can  be drawn no  matter  if  in  front  or  behind the  threads. This  can  be observed in  Figure \ref{btA8}.

  Observe  also  that  relation (\ref{E9}),  as  well  as  Remark \ref{rf2},  have  a  generalization  for  springs of  any  length,   as  shown  in   Figure \ref{btA12}   (case of  length equal  to  2).

 \begin{figure}[H]
 \centering
  \includegraphics* {btA12.pdf}
 \caption{ }\label{btA12}
 \end{figure}

 \subsubsection{Transitivity}  The  product of  three  springs     $E_{i,j}$  $E_{j,k}$ and  $E_{i,k}$  connecting the  threads $i$, $j$,  and  $k$, is  equivalent  to  the product of  any two  of the springs. So,  in particular    $E_{i,j}E_{i,k}$  is  equivalent  to  the  product $E_{i,j}E_{j,k}$.
 Note  that this  property   implies  the  equivalence  of  formulae  (\ref{EJ})  and  (\ref{EJI}).    We  shall show   two   cases for $n=7$.

Set
$$
I_1:=( \{2,3,5,7  \},\{1,4,6\}),  \quad I_2:=(\{2,3,5,6,7  \},\{1\}, \{4 \}).
$$
Then $E_{I_1}$ and $E_{I_2}$ have the   diagrams shown in Figure \ref{btA9},  according  to  (\ref{EJ}).
 \begin{figure}[H]
 \centering
 \includegraphics{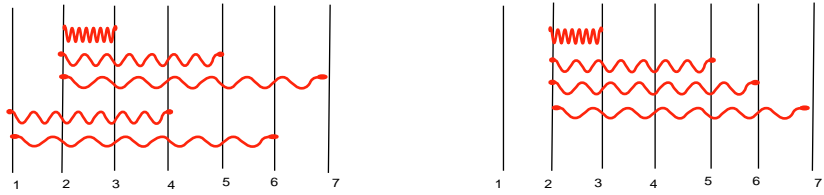}
  \caption{$E_{I_1}=(E_{2,3}E_{2,5}E_{2,7})(E_{1,4}E_{1,6}) \quad \quad
  \quad \quad   E_{I_2}=(E_{2,3}E_{2,5}E_{2,6}E_{2,7})$}
 \label{btA9}
  \end{figure}

These   elements  can  be  represented  by  the   diagrams drawn  in  Figure  \ref{btA10}.
 \begin{figure}[h]
  \centering
  \includegraphics{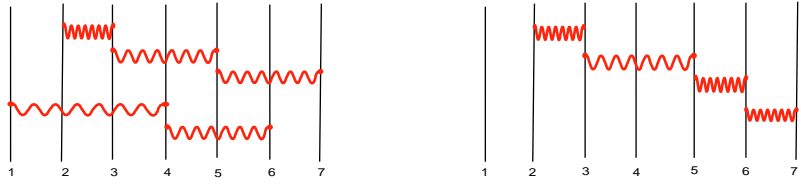}
   \caption{$E_{I_1}=(E_{2,3}E_{3,5}E_{5,7})(E_{1,4}E_{4,6}) \quad \quad
  \quad \quad   E_{I_2}=(E_{2,3}E_{3,5}E_{5,6}E_{6,7})$}\label{btA10}
 \end{figure}

\subsubsection{Mobility}
Let $s=\pm1$. Observe  the identities
\begin{equation}\label{commut1}
  T_i^s   E_{i+1}=  T_i^s  E_{i+1} ( T_i^{-s} T_i^{s})=  (T_i^s  E_{i+1}   T_i^{-s})  T_i^{s} = E_{i,i+2} T_i^s,
\end{equation}
\begin{equation}\label{commut2}
  E_{i+1} T_{i}^s = ( T_{i}^{s} T_{i}^{-s})   E_{i+1} T_{i}^s=  T_{i}^{s} (T_{i}^{-s}    E_{i+1} T_{i}^s)= T_{i}^s E_{i,i+2}.
\end{equation}
They can  be  interpreted  as  the  sliding  of  the  tie  up  and  down along  the  braid  under  stretching or  contracting.
In  other  words,  while  the  element  $T_i^{\pm 1}$     does  not  commute  with    $E_j$   when  $|j-i|=1$,  equations  (\ref{commut1}) and    (\ref{commut2}) provide   a  sort  of  commutation  rule between   $T_i^{\pm 1}$    and  the elastic tie.  See  in  Figure \ref{btA11}  the  sliding down (by  contracting)  of the  red  spring   and  the sliding down of the green spring (by  extending).

Similarly,  a spring $E_{i,j}$ of  any  length bigger  than one, \lq commutes\rq\   (changing  its  length  by $\pm1$)   with   $T_{i}$   and $T_{i-1}$,  as  well as with  $T_{j-1}$  and  $T_j$,  according  to  the    equalities:
$$  E_{i,j}T_{i}= T_{i} E_{i+1,j},  \quad  E_{i,j} T_{i-1} = T_{i-1}  E_{i-1,j}, \quad E_{i,j}T_{j}=T_{j}E_{i,j+1}, \quad E_{i,j}T_{j-1}T_{j-1}= E_{i,j-1}.$$
 The  same  equalities  hold  for the  inverse of  the  generators $T_i$'s.

\begin{remark} \rm The  peculiar relation  (\ref{E8}),   see  Figure \ref{btA4}, plays an  essential  role, together with  relation (\ref{E5}),  in  the  proof   of the transitivity property.  Using  this  property  and  the mobility property,  relation (\ref{E8})    in  terms  of springs  becomes clear,  as Figure \ref{btA11}  shows.

 \begin{figure}[H]
  \centering
  \includegraphics{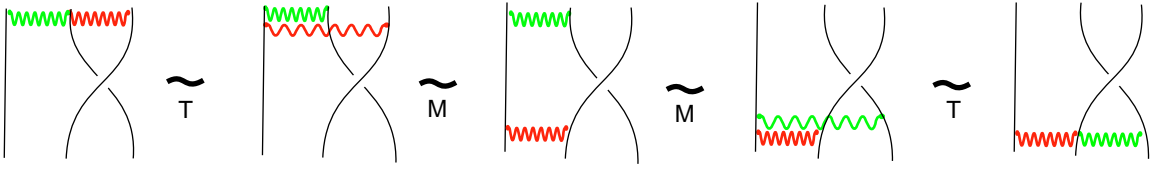}
  \caption{$\T$  and $\M$  indicate  the  Transitivity  and  Mobility properties  }\label{btA11}
 \end{figure}
\end{remark}

\section{Side Comments}
We  conclude with two comments which we think   deserve  to be examined in  depth.
\subsection{}
The referee has suggested the following: it would be interesting to know whether there is an integrable model based on the bt--algebra and built  with the use of relative traces.
According to the referee report, a good indication of the existence of an
integrable model related to the bt--algera is the  fact that  the relative
trace has  the following  property.  For all $X\in {\mathcal E}_{n-1}$, we have:
$$
 \varrho_{n}(T_{n-1}^{-1}X T_{n-1})= \varrho_{n-1}(X),  \qquad
 \varrho_{n}(T_{n-1}X T_{n-1}^{-1})= \varrho_{n-1}(X).
$$
\subsection{}
    In Subsection \ref{explanE}  it was noted that the bt--algebra  is conceived from  the Yokonuma--Hecke algebra. Now, the Yokonuma--Hecke algebra can be regarded as the prototype  example of the framization of a knot algebra, see \cite{julaWS2} for the  concept of framization and knot algebra. In few words,  the Yokonuma--Hecke algebra can be considered  as the Hecke algebra to  which  one  adds  framing generators:
the defining generators of the Yokonuma--Hecke algebra consist in fact in a set of braid generators and a set of framing generators.

As we explained is  Subsection \ref{explanE}, the construction of the bt--algebra is done  by considering abstractly the algebra  generated by the braid generators  $g_i$'s  together  with the idempotents $e_i$'s. Despite the  fact that  the $e_i$'s are  defined  by  means of the  framing generators (see formula (\ref{eisumti})),  such  generators  do not appear in the bt--algebra. So, starting from the Hecke algebra, the Yokonuma--Hecke algebra has  been constructed    by adding framing generators. Now, in the opposite direction, we  constructed from the Yokonuma--Hecke the bt--algebra, not containing framing generators. For this reason we   say that the bt--algebra is a \lq deframization\rq\ of the Yokonuma--Hecke algebra.

  Thinking in this way one can   define   naturally  deframizations of   all the algebras of knots  framized listed in \cite{julaWS2}. Moreover, there is a natural deframization associated to certain algebras ${\rm Y}(d,m,n)$ defined in  \cite{chpoArxiv}, where $d$, $n$ are positive integers and $m$ is either a positive integer or $\infty$. More  precisely,  for any  positive integer $a$, set  $v_1,\dots , v_a$  as indeterminates. Set $K_m := K( v_1, \ldots, v_m)$  for a positive integer $m$   and $K_{\infty}:= K$;  we  can  define  a deframization of
 ${\rm Y}(d,m,n)$  as the associative algebra over $K_m$ generated by $T_1,\ldots , T_{n-1}$, $E_1, \ldots, E_{n-1}$, $X^{\pm 1}$ subject to the  relations (\ref{E1}) to (\ref{E9}) together with the following relations:
 $$
\begin{array}{rcll}
XT_1XT_1 & = & T_1 X T_1 X  &\\
X T_i  & = & T_i X & \text{for }\quad i\in \{2, \ldots , n-1\}\\
XE_i & = & E_i X & \text{for }\quad i\in \{1, \ldots , n-1\}\\
(X-v_1) \ldots (X- v_m) & =  & 0 & \text{for}\quad m<\infty.
\end{array}
$$
It is worth to note that  the algebras ${\rm Y}(d,m,n)$,   can be
regarded as framizations of knot algebras,  and  that in fact  ${\rm Y}(d,1,n)$  is    the  Yokonuma--Hecke  algebra. Notice that by applying the deframization   to a  framized algebra,
 we do  not recover the original algebra.

\vskip 1 cm

   \noindent {\it Added, October 28, 2015}:
As we explained in Subsection 5.3,  the invariant $\bar{\Delta}$ is a generalization of the invariant  $\Delta$ for classical links,  defined in \cite{julaAM, julaMKTA}. These invariants are constructed by using the Jones' recipe applied to the bt--algebra and the Yokonuma--Hecke algebra, respectively; in both algebras  a similar expression for the quadratic relation is  used. In \cite{chljukala},   an invariant $\Theta$  for  classical  links   is  defined, starting  from  the Yokonuma--Hecke algebra,  but  using a different  quadratic  relation  with  respect to that used  in  the definition of  $\Delta$.
 In  the  same paper \cite{chljukala}, it is proved  that  $\Theta$ coincides with the Homflypt polynomial  on knots   (this was proved later also in \cite{japo}), but it may distinguish links that are not distinguished by the Homflypt polynomial. Recently, the first author of the present paper, has  verified  that  $\bar \Delta$ distinguishes  pairs  of  links  not  distinguished  by  the  Homflypt polynomial,  and  that the pairs distinguished by $\bar\Delta$ and  $\Theta$  do not  coincide in  some  cases.  Therefore, we know that    the invariant $\bar{\Delta}$ for links
is more powerful  that the Homflypt polynomial,  but  its  relation  with  $\Theta$  deserves  a  deeper investigation.

\end{document}